\documentclass[11pt]{amsart}
\usepackage{amsmath, amsthm, amssymb}
\usepackage[margin=1in]{geometry}
\usepackage{epsfig}
\usepackage{rawfonts}
\usepackage{enumerate}
\usepackage{graphics}
\usepackage{multirow}
\usepackage{xspace}
\usepackage{graphicx}
\usepackage{pgf,tikz,pgfplots}
\usepackage{mathrsfs}
\usetikzlibrary{arrows}
\usepackage{amsmath}
\usepackage{amsfonts}
\usepackage{amssymb}
\usepackage{amsthm}
\usepackage{graphicx}
\usepackage{booktabs}
\usepackage{caption}
\usepackage{listings}
\usepackage{setspace}
\usepackage[mathscr]{eucal}
\usepackage{pgfplots}
\usepackage{hyperref}
\usepackage{wrapfig}
\usepackage{floatflt,epsfig}
\usepackage{dsfont}
\usepackage{amscd}
\usepackage{tikz-cd}
\usepackage{fancyhdr}
\usepackage[all]{xy}
\usepackage{latexsym}
\usepackage{amscd}
\usepackage{pifont}
\usepackage{eufrak}
\usepackage{subfig}
\captionsetup[subfigure]{labelfont=rm}
\usepackage{easyReview}

\sloppy

\newcommand{\numberset}{\mathbb}
\newcommand{\N}{\numberset{N}}

\newcommand{\kthick}[1]{\textup{T}_{#1}}

\newcommand{\cC}{\mathcal{C}}

\newcommand{\cH}{\mathcal{H}}

\newcommand{\cK}{\mathcal{K}}
\newcommand{\cB}{\mathcal{B}}

\theoremstyle{plain}

\theoremstyle{theorem}

\newtheorem{definition}{Definition}[section]
\newtheorem{proposition}[definition]{Proposition}
\newtheorem{theorem}[definition]{Theorem}
\newtheorem{lemma}[definition]{Lemma}

\newtheorem{corollary}[definition]{Corollary}
\newtheorem{example}[definition]{Example}
\newtheorem{question}[definition]{Question}
\newtheorem{remark}[definition]{Remark}

\theoremstyle{remark}

\makeatletter
\@namedef{subjclassname@2020}{\textup{2020} Mathematics Subject Classification}
\makeatother

\begin{document}
		
		\title[On unboundedness of some invariants of $\mathcal{C}$-semigroups]{On unboundedness of some invariants of $\mathcal{C}$-semigroups}

         \author{OM PRAKASH BHARDWAJ}
		\address{Department of Mathematics, Indian Institute of Technology Bombay, Powai, Maharashtra 400076, India
			}
		\email{om.prakash@math.iitb.ac.in}        
  
		\author{CARMELO CISTO}
		\address{Universit\`{a} di Messina, Dipartimento di Scienze Matematiche e Informatiche, Scienze Fisiche e Scienze della Terra\\
			Viale Ferdinando Stagno D'Alcontres 31\\
			98166 Messina, Italy}
		\email{carmelo.cisto@unime.it}

      \keywords{$\cC$-semigroup, conductor, Ap\'ery set, type, reduced type}
		
		\subjclass[2020]{20M14, 13D02, 11D07, 05E40}

		\begin{abstract}

In this article, we first prove that the type of an affine semigroup ring is equal to the number of maximal elements of the Ap\'ery set with respect to the set of exponents of the monomials, which form a maximal regular sequence. Further, we consider $\mathcal{C}$-semigroups in $\mathbb{N}^d$ and prove that the notions of symmetric and almost symmetric $\mathcal{C}$-semigroups are independent of term orders. We further investigate the conductor and the Ap\'ery set of a $\mathcal{C}$-semigroup with respect to a minimal extremal ray. Building upon this, we extend the notion of reduced type to $\mathcal{C}$-semigroups and study its extremal behavior. For all $d$ and fixed $e \geq 2d$, we give a class of $\mathcal{C}$-semigroups of embedding dimension $e$ such that both the type and the reduced type do not have any upper bound in terms of the embedding dimension. We further explore irreducible decompositions of a $\mathcal{C}$-semigroup and give a lower bound on the irreducible components in an irreducible decomposition. Consequently, we deduce that for each positive integer $k$, there exists a $\mathcal{C}$-semigroup $S$ such that the number of irreducible components of $S$ is at least $k$.

		\end{abstract}

		\maketitle

\section{Introduction}

Let $\mathbb{Z}$ be the set of integers, $\mathbb{N}$ be the set of nonnegative integers, and $\mathbb{Q}_{\geq 0}$ be the set of nonnegative rational numbers. In this article, the main objects of study are affine $\mathcal{C}$-semigroups. These semigroups are commonly encountered in the literature. Still, their independent study began with the article \cite{garcia2018extension}, where the authors extended the well-known Wilf's conjecture for numerical semigroups to the affine semigroups. Recall that an affine semigroup is a finitely generated submonoid of $\mathbb{N}^d$ for some positive integer $d$. It is known that an affine semigroup has a unique minimal set of generators, and the cardinality of the minimal generating set is known as the embedding dimension of $S$, denoted by $\operatorname{e}(S)$. Let $ S \subseteq \mathbb{N}^d $ be an affine semigroup. The set $ \mathrm{cone}(S) = \mathrm{Span}_{\mathbb{Q}_{\geq 0}}(S) $ denotes the rational cone generated by $ S $. The set $ \mathcal{H}(S) = (\mathrm{cone}(S) \setminus S) \cap \mathbb{N}^d $ is referred to as the set of \textit{gaps} of $ S $, and an element $\mathbf{f} \in \mathcal{H}(S)$ satisfying $\mathbf{f} + S \setminus \{0\} \subseteq S$ referred to as a pseudo-Frobenius element of $S$. Define $ \mathcal{C} = \mathrm{cone}(S) \cap \mathbb{N}^d $; this is an affine semigroup and known as the \textit{integral closure} of $ S $. An affine semigroup is called a \textit{$ \mathcal{C}$-semigroup} if $ \mathcal{H}(S) $ is finite. When $d=1$ and $ S $ is a $ \mathcal{C} $-semigroup, it is known as a \textit{numerical semigroup}. Numerical semigroups have rich literature and connections to combinatorics, commutative algebra, number theory, and algebraic geometry. If $ \mathcal{C} = \mathbb{N}^d $ (referred to as the full cone), then $ S $ is known as a \textit{generalized numerical semigroup}, introduced in \cite{failla2016algorithms}.

In this article, we aim to explore some invariants of $\mathcal{C}$-semigroups. These invariants have nice algebraic and homological correspondence to the invariants of the corresponding semigroup ring. Let us first recall the definition of a semigroup ring. Let $K$ be a field. For an affine semigroup $S \subseteq \mathbb{N}^d$, the semigroup ring $K[S]$ is defined as the $K$-subalgebra of the polynomial ring $K[t_1, \ldots, t_d]$ generated by the monomials $\mathbf{t}^{\mathbf{s}}$; where $\mathbf{s} \in S.$ In \cite{garcia-pseudofrobenius}, the authors establish that an affine semigroup $ S $ possesses pseudo-Frobenius elements if and only if the length of the minimal graded free resolution of its semigroup ring $ K[S] $ is maximally possible. Since $ K[S] $ is an integral domain, according to the Auslander-Buchsbaum formula, this condition translates to $ S $ having pseudo-Frobenius elements precisely when $ \mathrm{depth}(K[S]) = 1 $. Given that a $ \mathcal{C} $-semigroup always contains pseudo-Frobenius elements, its semigroup ring invariably maintains depth one. Thus, such semigroups are never Cohen-Macaulay unless $ d = 1 $. Moreover, in accordance with \cite[Corollary 7]{garcia-pseudofrobenius}, the cardinality of the set of pseudo-Frobenius elements coincides with the last Betti number of the corresponding semigroup ring, which further equals the type of the semigroup ring. Additionally, the semigroup rings associated with certain generalized numerical semigroups in $ \mathbb{N}^2 $ have previously appeared in literature as the associated graded rings of the ring of differential operators of affine monomial curves (see \cite{eriksen}). The non-Cohen-Macaulay nature of $ \mathcal{C} $-semigroups thus provides a complete answer regarding the Cohen-Macaulay property of the associated graded rings of the ring of differential operators of affine monomial curves. The invariants and properties of $\mathcal{C}$-semigroups, which we are mainly interested in, are related to the \emph{type} of the corresponding semigroup ring, which is a classical invariant defined for Noetherian commutative rings (see, for instance, \cite{bruns1998cohen}). We refer to the \emph{type} of the semigroup $S$ as the type of the semigroup ring $K[S]$. In the context of numerical semigroups, it is known that the type is equal to the number of the pseudo-Frobenius elements of the semigroups. Some interesting studies about the properties of the type of a numerical semigroup have been performed. One of these concerns with the investigation of the behavior of the type in particular families of numerical semigroups having fixed embedding dimensions, to find if the type is bounded or not in these families, see for instance \cite{moscarielloedim4, moscarielloedim5}. For another topic, a new invariant called \emph{reduced type} has been recently introduced in \cite{huneke2021torsion}, which can be seen as a lower bound of the type. Some properties of the reduced type for numerical semigroups have been studied in \cite{maitra2023extremal}. The aim of our work is to extend these studies in the context of $\mathcal{C}$-semigroups. In particular, we provide some results in this more general context and extend some known properties of numerical semigroups in this framework.

 Now, we summarize the contents of the paper. In Section 2, we recall some basic definitions and notions which will be used throughout the article. In Section 3, we give the definition of the type for an affine semigroup and relate it to the cardinality of the maximal elements of the Ap\'ery set of a specific subset in Theorem~\ref{thm:type_apery}. In Section 4, we study some important properties of $\cC$-semigroups. We start by considering \cite[Theorem 2]{singhal} in a more general context, and as a consequence in Corollary~\ref{cor:FA(S)}, we prove that in $\mathcal{C}$-semigroups the set of Frobenius allowable elements is equal to the set of maximal gaps with respect to the usual partial order on $\mathbb{N}^d$. We next consider symmetric and almost symmetric $\mathcal{C}$-semigroups; we show that these notions are independent of term orders and give characterizations in Propostions~\ref{prop:C-symmeric} and \ref{prop:C-almost-symmetric}, which solely depend on the semigroups. We also extend the concept of quasi-irreducible (given in \cite{singhal} ) to $\mathcal{C}$-semigroup by giving equivalent criteria in Theorem~\ref{thm:equivalent-quasiIrred}. In Section 5, we introduce the concept of reduced type. In order to study this invariant, we give the description of conductor for $\mathcal{C}$-semigroups in Proposition~\ref{prop:conductor}. In Propositions~\ref{prop:maximal-redtype} and \ref{equivalent_minreltype}, we provide characterizations of $\mathcal{C}$-semigroups to have minimal or maximal reduced type. These properties are also investigated for some classes of generalized numerical semigroups, for instance, for $T$-graded generalized numerical semigroups in Theorem~\ref{thm:tgraded_redtype} and for semigroups obtained by the so-called thickening in Theorem~\ref{thm:thick_redtype}. In Section 5, we study the unboundedness properties of the invariants type, reduced type, and minimal number of irreducible components of a $\mathcal{C}$-semigroup. In particular, by combining Remark~\ref{remarktyped=1}, Proposition~\ref{prop:unbound_typeGNS}, we conclude that for all values $e\geq 2d$, there exists a family of $\mathcal{C}$-semigroups having fixed embedding dimension equal to $e$ and unbounded type. We find the same result for the reduced type by considering Proposition~\ref{redtypeextension} and ~\ref{unboundedredtype}. Further, we consider the decomposition of a $\mathcal{C}$-semigroups into irreducible $\mathcal{C}$-semigroups, and in Theorem~\ref{thm:unboundeness-irreducible}, we give a lower bound on the number of components in an irreducible decomposition. Consequently, in Corollary~\ref{unbooundeddecomposition}, we prove that for every positive integer $k$, there exist $\mathcal{C}$-semigroups having the number of irreducible components greater than or equal to $k$. 

\section{Preliminaries}
In this section, we recall the main notions used in the paper. Let $S\subseteq \mathbb{N}^d$ be an affine semigroup. We denote $\mathrm{cone}(S)=\mathrm{Span}_{\mathbb{Q}_{\geq 0}}(S)$. Let $A$ be the set of minimal generators of $S$, then there exists a subset $B\subseteq A$ such that $\mathrm{cone}(S)=\mathrm{Span}_{\mathbb{Q}_{\geq 0}}(B)$ and for all $B'\subsetneq B$ then $\mathrm{cone}(S)\neq \mathrm{Span}_{\mathbb{Q}_{\geq 0}}(B')$. A set having the property of $B$ is called a set of \emph{extremal rays} of $S$. Assume $B=\{\mathbf{b}_1,\ldots,\mathbf{b}_t\}$ for some $t\in \mathbb{N}$. For all $i\in \{1,\ldots,t\}$, let us denote $\mathbf{n}_i=\min \{q\mathbf{b}_i \in S\mid q\in \mathbb{Q}_{\geq 0}\}$. Define $E=\{\mathbf{n}_1,\ldots,\mathbf{n}_t\}$. Obviously, $E$ is a set of extremal rays of $S$ that we call \textit{minimal extremal rays}. If $t=d$, then $S$ is called a \textit{simplicial} affine semigroup. 

\medskip
Let $X$ be a non-empty subset of $S$, define
$$\mathrm{Ap}(S,X)=\{\mathbf{s}\in S\mid \mathbf{s}-\mathbf{x}\notin S\ \text{ for all } \mathbf{x}\in X\}.$$
The set $\mathrm{Ap}(S,X)$ is called the \textit{Ap\'ery set} of $S$ with respect to $X$. If $\mathbf{x}\in S$, simplifying the notation, for $\mathrm{Ap}(S,\mathbf{x})$ we mean $\mathrm{Ap}(S,\{\mathbf{x}\})$. The set $\mathrm{Ap}(S,X)$ can be infinite, but it is known that $\mathrm{Ap}(S, E)$ is a finite set (see, for instance, \cite{jafari2024depth}).


\medskip
For a subset $L \subseteq \mathbb{N}^d$, define the partial order $\leq_L$ on $\mathbb{N}^d$ as follows: $\mathbf{x} \leq_L \mathbf{y}$ if and only if $\mathbf{y}-\mathbf{x} \in L.$ The set of gaps of $S$ is defined as $\cH(S):=(\mathrm{cone}(S)\setminus S)\cap \mathbb{N}^d$.  An element $\mathbf{f}\in \cH(S)$ is called a\textit{ pseudo-Frobenius} element if $\mathbf{f} + S\setminus\{0\} \subseteq S$. The set of pseudo-Frobenius elements is denoted by $\mathrm{PF}(S)$. In particular,
$$\operatorname{PF}(S)=\{\mathbf{h}\in \cH(S)\mid \mathbf{h}+\mathbf{s}\in S, \text{ for all }\mathbf{s}\in S\setminus \{\mathbf{0}\}\}.$$
An affine semigroup $S$ such that $\operatorname{PF}(S)\neq \emptyset$ is called a \textit{maximal projective dimension} (in short MPD) semigroup (see \cite{garcia-pseudofrobenius}). In this kind of semigroup, there is a relation between pseudo-Frobenius elements and the Ap\'ery set with respect to some element in the semigroup. Being careful with the \cite[Definition 14]{garcia-pseudofrobenius}, and using the same arguments as of  \cite[Proposition 16]{garcia-pseudofrobenius}, it can be verified that $\operatorname{PF}(S)=\{\mathbf{y}-\mathbf{x}\mid \mathbf{y}\in \mathrm{Maximals}_{\leq_S}\mathrm{Ap}(S,\mathbf{x})\} $. This fact allows us to state the following result.
\begin{proposition}\label{prop:aperyPF}
Let $S$ be an MPD-semigroup and $\mathbf{x}\in S$. Then
$$\mathrm{Maximals}_{\leq_S}\mathrm{Ap}(S,\mathbf{x})=\{\mathbf{h}+\mathbf{x}\mid\mathbf{h}\in \operatorname{PF}(S)\}.$$
\end{proposition}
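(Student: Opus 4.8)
The plan is to establish the two inclusions separately, exploiting the description of $\operatorname{PF}(S)$ recalled just before the statement, namely
$$\operatorname{PF}(S)=\{\mathbf{y}-\mathbf{x}\mid \mathbf{y}\in \mathrm{Maximals}_{\leq_S}\mathrm{Ap}(S,\mathbf{x})\}.$$
First I would observe that every $\mathbf{y}\in\mathrm{Maximals}_{\leq_S}\mathrm{Ap}(S,\mathbf{x})$ can be written as $\mathbf{y}=(\mathbf{y}-\mathbf{x})+\mathbf{x}$, and by the displayed description $\mathbf{h}\lik\mathbf{y}-\mathbf{x}\in\operatorname{PF}(S)$; hence $\mathbf{y}\in\{\mathbf{h}+\mathbf{x}\mid\mathbf{h}\in\operatorname{PF}(S)\}$, which gives the inclusion $\subseteq$. (One should check $\mathbf{y}-\mathbf{x}$ is well-defined in $\mathbb{N}^d$; but since $\mathbf{y}\in\mathrm{Ap}(S,\mathbf{x})$ forces $\mathbf{y}-\mathbf{x}\notin S$, in particular it is a genuine lattice point once $\mathbf{y}\geq_S$ something involving $\mathbf{x}$ — more simply $\mathbf{h}\in\mathcal{H}(S)\subseteq\mathbb{N}^d$ already.)

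For the reverse inclusion $\supseteq$, take $\mathbf{h}\in\operatorname{PF}(S)$ and set $\mathbf{y}=\mathbf{h}+\mathbf{x}$. I would first argue $\mathbf{y}\in S$: indeed $\mathbf{x}\in S\setminus\{\mathbf{0}\}$ (we may assume $\mathbf{x}\neq\mathbf{0}$, the case $\mathbf{x}=\mathbf{0}$ being degenerate since then $\mathrm{Ap}(S,\mathbf{0})=\{\mathbf{0}\}$ and $\operatorname{PF}(S)$ would need separate handling, or simply excluded), so $\mathbf{h}+\mathbf{x}\in S$ by definition of pseudo-Frobenius element. Next, $\mathbf{y}\in\mathrm{Ap}(S,\mathbf{x})$: if $\mathbf{y}-\mathbf{x}=\mathbf{h}\in S$ then $\mathbf{h}$ would not be a gap, contradiction. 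Finally I must show $\mathbf{y}$ is maximal in $\mathrm{Ap}(S,\mathbf{x})$ with respect to $\leq_S$: suppose $\mathbf{y}\leq_S\mathbf{y}'$ with $\mathbf{y}'\in\mathrm{Ap}(S,\mathbf{x})$, so $\mathbf{y}'=\mathbf{y}+\mathbf{s}$ for some $\mathbf{s}\in S$. If $\mathbf{s}\neq\mathbf{0}$, then $\mathbf{y}'-\mathbf{x}=\mathbf{h}+\mathbf{s}\in S$ because $\mathbf{h}\in\operatorname{PF}(S)$ and $\mathbf{s}\in S\setminus\{\mathbf{0}\}$; this contradicts $\mathbf{y}'\in\mathrm{Ap}(S,\mathbf{x})$. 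Hence $\mathbf{s}=\mathbf{0}$ and $\mathbf{y}'=\mathbf{y}$, proving maximality. Thus $\mathbf{y}=\mathbf{h}+\mathbf{x}\in\mathrm{Maximals}_{\leq_S}\mathrm{Ap}(S,\mathbf{x})$.

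The only genuinely delicate point is the clean identification of $\mathbf{y}-\mathbf{x}$ with an element of $\operatorname{PF}(S)$ going from left to right, which is precisely what the quoted reformulation (adapted from \cite[Proposition 16]{garcia-pseudofrobenius}) supplies; once that is in hand the argument is a direct unwinding of definitions. I would therefore present the proof as: cite the established equality $\operatorname{PF}(S)=\{\mathbf{y}-\mathbf{x}\mid\mathbf{y}\in\mathrm{Maximals}_{\leq_S}\mathrm{Ap}(S,\mathbf{x})\}$ for the inclusion $\subseteq$, and carry out the short self-contained verification above (membership in $S$, membership in $\mathrm{Ap}(S,\mathbf{x})$, maximality) for $\supseteq$. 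No step should require more than a couple of lines, and the main obstacle — if any — is merely being careful that the map $\mathbf{h}\mapsto\mathbf{h}+\mathbf{x}$ is a bijection and not just a surjection, which the maximality argument secures by showing distinct $\mathbf{h}$ give distinct $\mathbf{y}$ trivially (they differ by the fixed $\mathbf{x}$).
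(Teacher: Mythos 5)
Your proof is correct and follows essentially the same route as the paper, which treats the proposition as an immediate restatement of the quoted equality $\operatorname{PF}(S)=\{\mathbf{y}-\mathbf{x}\mid \mathbf{y}\in \mathrm{Maximals}_{\leq_S}\mathrm{Ap}(S,\mathbf{x})\}$ from \cite{garcia-pseudofrobenius}; your additional direct verification of the $\supseteq$ inclusion from the definition of a pseudo-Frobenius element is sound and self-contained. (Only a trivial aside is off: $\mathrm{Ap}(S,\mathbf{0})=\emptyset$ rather than $\{\mathbf{0}\}$, but you rightly exclude that degenerate case anyway.)
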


\medskip
When the complement of an affine semigroup $S$ in its integer cone $\cC=\mathrm{cone}(S)\cap \mathbb{N}^d$ is finite, it is called $\mathcal{C}$-semigroup. Note that $\mathcal{C}$-semigroups are a particular case of MPD-semigroups. In fact, if $S$ is a $\mathcal{C}$-semigroup, then $\cH(S)$ is finite, and the maximal gaps with respect to the order $\leq_S$ are pseudo-Frobenius elements.

\medskip
Now, we recall the definition of \textit{conductor} of an affine semigroup $S$. Denote $\mathrm{group}(S)=\{\mathbf{a}-\mathbf{b}\in \mathbb{Z}^d\mid \mathbf{a},\mathbf{b}\in S\}$ and consider the set 
 $\overline{S}=\mathrm{cone}(S)\cap \mathrm{group}(S)$, which is a normal affine semigroup and it is called the \textit{normalization} of $S$. The \textit{conductor} of $S$ is the set $\mathfrak{C}_S=\{\mathbf{s}\in S\mid \mathbf{s}+\overline{S}\subseteq S\}$.

\medskip
 Let $S\subseteq \mathbb{N}^d$ be a $\mathcal{C}$-semigroup. It is known that in this case $\overline{S}=\mathcal{C}$. In this article, when $\mathcal{C}=\mathbb{N}^d$, we call $S$ a \textit{generalized nuemrical semigroup}. While, in the case $d=1$, $S$ is called a \textit{numerical semigroup}. Numerical semigroups are largely studied in the literature (see, for instance, \cite{rosales2009numerical}). If $S$ is a numerical semigroup, we remind that the element $\operatorname{F}(S)=\max (\mathbb{Z}\setminus S)$ is called the \textit{Frobenius number} of $S$ and the number $\operatorname{m}(S)=\min S\setminus\{0\}$ is called the \textit{multiplicity} of $S$. Moreover, in this case, $\mathfrak{C}_S=\{n\in \mathbb{N}\mid n> \operatorname{F}(S)\}$.

\section{Semigroup ring and type of an affine semigroup}
Assume $R$ is a Noetherian graded ring with maximal homogeneous ideal $\mathfrak{m}$ (or a Noetherian local ring with maximal ideal $\mathfrak{m}$). The \emph{type} of $R$  is the value $\mathrm{t}(R)=\dim_K\left(\frac{\mathfrak{n}R:\mathfrak{m}}{\mathfrak{n}R}\right)$ where $\mathfrak{n}$ is any ideal generated by a maximal regular sequence on $R$ (see \cite[Lemma 1.2.19]{bruns1998cohen}).

\medskip
 Let $S=\langle \mathbf{n}_1,\ldots,\ldots,\mathbf{n}_{r}\rangle \subseteq \mathbb{N}^d$ be an affine semigroup minimally generated by $\mathbf{n}_1,\ldots,\ldots,\mathbf{n}_{r}$. We will follow this notation throughout the article for an affine semigroup minimally generated by $\mathbf{n}_1,\ldots,\ldots,\mathbf{n}_{r}$. Let $K$ be a field and $S = \langle \mathbf{n}_1,\ldots,\ldots,\mathbf{n}_{r}\rangle$ be an affine semigroup, the \textit{semigroup ring} $K[S]:=K[X^\mathbf{s}\mid \mathbf{s}\in S]$ is a $K$-subalgebra of the polynomial ring $K[X_1, \ldots, X_d]$. The ring $K[S]$ is Noetherian $S$-graded with maximal homogeneous ideal $\mathfrak{m}=(X^{\mathbf{n}_1},\ldots,X^{\mathbf{n}_{r}})$. We denote $\operatorname{t}(S)=\operatorname{t}(K[S])$, and we call it the \textit{type} of $S$. In \cite[Section 3]{jafari2022type}, it is showed that $\operatorname{t}(K[S])=\operatorname{t}(K[S]_{\mathfrak{m}})$. 
The following relation between $\operatorname{t}(S)$ and some Ap\'ery set of $S$ with respect to the exponents of the monomials, which form a maximal regular sequence in $K[S].$

\begin{theorem}\label{thm:type_apery}
Let $S=\langle \mathbf{n}_1,\ldots,\ldots,\mathbf{n}_{r}\rangle\subseteq \mathbb{N}^d$ be an affine semigroup and $R=K[S]$. Assume $Q=\{\mathbf{a}_1,\ldots,\mathbf{a}_m\}$ such that $\{X^{\mathbf{a}_1},\ldots,X^{\mathbf{a}_m}\}$ is a maximal regular sequence in $R$. Then $\operatorname{t}(S)= |\mathrm{Maximals}_{\leq_S}\mathrm{Ap}(S,Q)|.$
\end{theorem}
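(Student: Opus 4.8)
The plan is to compute the type $\mathrm{t}(R)$ directly from the definition $\mathrm{t}(R)=\dim_K\bigl((\mathfrak{n}R:\mathfrak{m})/\mathfrak{n}R\bigr)$ with $\mathfrak{n}=(X^{\mathbf{a}_1},\ldots,X^{\mathbf{a}_m})$, and to identify the quotient $(\mathfrak{n}R:\mathfrak{m})/\mathfrak{n}R$ with a $K$-vector space having as a basis the monomials $X^{\mathbf{s}}$ for $\mathbf{s}$ ranging over $\mathrm{Maximals}_{\leq_S}\mathrm{Ap}(S,Q)$. The key observation is that $K[S]$ is $S$-graded, each graded piece is one-dimensional spanned by $X^{\mathbf{s}}$, and the ideals $\mathfrak{n}R$, $\mathfrak{m}$, and $\mathfrak{n}R:\mathfrak{m}$ are all $S$-homogeneous. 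So everything reduces to a combinatorial statement about the semigroup. First I would record that $\mathfrak{n}R$, as a $K$-vector space, is spanned by $\{X^{\mathbf{s}}\mid \mathbf{s}\in \bigcup_{i=1}^m(\mathbf{a}_i+S)\}$, hence the monomial complement of $\mathfrak{n}R$ in $R$ is exactly $\mathrm{Ap}(S,Q)$ (an element $\mathbf{s}\in S$ lies outside $\bigcup_i(\mathbf{a}_i+S)$ iff $\mathbf{s}-\mathbf{a}_i\notin S$ for every $i$, which is the definition of the Ap\'ery set with respect to $Q$).

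Next I would translate the colon ideal: since $\mathfrak{m}=(X^{\mathbf{n}_1},\ldots,X^{\mathbf{n}_r})$ and $\mathbf{n}_1,\ldots,\mathbf{n}_r$ generate $S$ (so that $\mathbf{s}+\mathbf{n}_j$ for varying $j$ covers all of $\mathbf{s}+(S\setminus\{\mathbf{0}\})$), for a homogeneous element $X^{\mathbf{s}}$ we have $X^{\mathbf{s}}\in \mathfrak{n}R:\mathfrak{m}$ if and only if $X^{\mathbf{s}}X^{\mathbf{n}_j}\in \mathfrak{n}R$ for all $j$, i.e. $\mathbf{s}+\mathbf{n}_j\in \bigcup_i(\mathbf{a}_i+S)$ for all $j$; equivalently $\mathbf{s}+\mathbf{s}'\notin\mathrm{Ap}(S,Q)$ for every $\mathbf{s}'\in S\setminus\{\mathbf{0}\}$. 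Combining the two facts, a monomial $X^{\mathbf{s}}$ contributes a nonzero class to $(\mathfrak{n}R:\mathfrak{m})/\mathfrak{n}R$ precisely when $\mathbf{s}\in\mathrm{Ap}(S,Q)$ but $\mathbf{s}+\mathbf{s}'\notin\mathrm{Ap}(S,Q)$ for all $\mathbf{s}'\in S\setminus\{\mathbf{0}\}$ — that is, exactly when $\mathbf{s}$ is a maximal element of $\mathrm{Ap}(S,Q)$ with respect to $\leq_S$. Since the quotient is $S$-graded with one-dimensional pieces, its $K$-dimension equals the number of such $\mathbf{s}$, which is $|\mathrm{Maximals}_{\leq_S}\mathrm{Ap}(S,Q)|$, and the theorem follows once we know $\mathfrak{n}$ is generated by a maximal regular sequence (guaranteed by hypothesis, so that the formula for $\mathrm{t}(R)$ applies).

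The main subtlety I anticipate is the careful bookkeeping of the $S$-grading together with the reduction modulo $\mathfrak{n}R$: one must be sure that when passing to the quotient the classes of the "maximal Ap\'ery monomials" are genuinely linearly independent, and that no element of $\mathfrak{n}R:\mathfrak{m}$ that is a $K$-combination of non-Ap\'ery monomials can sneak in a contribution — both of which follow cleanly from homogeneity but need to be stated. A second point worth making explicit is the equivalence "$\mathbf{s}+\mathbf{n}_j\in\bigcup_i(\mathbf{a}_i+S)$ for all $j$" $\iff$ "$\mathbf{s}+\mathbf{s}'\notin\mathrm{Ap}(S,Q)$ for all $\mathbf{s}'\in S\setminus\{\mathbf{0}\}$": the forward direction uses that any $\mathbf{s}'\in S\setminus\{\mathbf{0}\}$ is of the form $\mathbf{n}_j+\mathbf{s}''$ and that $\bigcup_i(\mathbf{a}_i+S)$ is a union of translates of $S$ hence closed under adding elements of $S$; the reverse direction is immediate. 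Once these are in place, the argument is essentially a direct computation. I would also remark at the end that this recovers, via Proposition~\ref{prop:aperyPF}, the classical statement for numerical semigroups that the type equals the number of pseudo-Frobenius elements, by taking $Q=\{\mathbf{x}\}$ for a single nonzerodivisor exponent.
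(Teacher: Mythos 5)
Your proposal is correct and follows essentially the same route as the paper's proof: identify the monomial complement of $\mathfrak{n}R$ with $\mathrm{Ap}(S,Q)$, translate the colon condition $X^{\mathbf{s}}\mathfrak{m}\subseteq\mathfrak{n}R$ into $\mathbf{s}+\mathbf{s}'\notin\mathrm{Ap}(S,Q)$ for all $\mathbf{s}'\in S\setminus\{\mathbf{0}\}$, and conclude that $(\mathfrak{n}R:\mathfrak{m})/\mathfrak{n}R$ is spanned by the classes of $X^{\mathbf{t}}$ with $\mathbf{t}\in\mathrm{Maximals}_{\leq_S}\mathrm{Ap}(S,Q)$. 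Your explicit appeal to the $S$-grading (one-dimensional graded pieces, homogeneity of all ideals involved) just makes precise what the paper's term-by-term argument uses implicitly.
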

\begin{proof}
 Denote $\mathfrak{n}=(X^{\mathbf{a}_1},\ldots,X^{\mathbf{a}_m})$. We will prove that 
$$\frac{\mathfrak{n}R:\mathfrak{m}}{\mathfrak{n}R}=\mathrm{Span}_K\left (\{X^{\mathbf{t}}+\mathfrak{n}\mid \mathbf{t}\in \mathrm{Maximals}_{\leq_S}\mathrm{Ap}(S,Q)\}\right).$$
After this, the result will follow from the hypothesis that $\{X^{\mathbf{a}_1},\ldots,X^{\mathbf{a}_m}\}$ is a maximal regular sequence in $R$. First observe that if $f+\mathfrak{n}\in R/\mathfrak{n}$, then $f+\mathfrak{n}=\sum_{i\in I}k_iX^{\mathbf{s}_i}+\mathfrak{n}$, for some finite set $I$, $k_i\in K$ and $\mathbf{s}_i\in S$ with $X^{\mathbf{s}_i}\notin \mathfrak{n}$ for all $i\in I$. In particular, we have $\mathbf{s}_i\in \mathrm{Ap}(S,Q)$ for all $i\in I$. In fact, if $\mathbf{s}_i\notin \mathrm{Ap}(S,Q)$, then there exists $\mathbf{a}_j\in Q$ such that $\mathbf{s}_i=\mathbf{s}+\mathbf{a}_j$ for some $\mathbf{s}\in S$. This means $X^{\mathbf{s}_i}=X^{\mathbf{s}}X^{\mathbf{a}_j}\in \mathfrak{n}$, a contradiction. So, $\mathbf{s}_i\in \mathrm{Ap}(S,Q)$ for all $i\in I$. If $f+\mathfrak{n}=\sum_{i\in I}k_iX^{\mathbf{s}_i}+\mathfrak{n} \in \frac{\mathfrak{n}R:\mathfrak{m}}{\mathfrak{n}R}$, then $X^{\mathbf{s}_i+\mathbf{n}_j}\in \mathfrak{n}$ for all $i\in I$ and for all $j\in \{1,\ldots,r\}$. In particular, $\mathbf{s}_i+\mathbf{n}_j-\mathbf{a}_\ell\in S$ for some $\ell\in \{1,\ldots,m\}$. This implies that $\mathbf{s}_i+\mathbf{s}\notin \mathrm{Ap}(S,Q)$ for all $\mathbf{s}\in S\setminus \{\mathbf{0\}}$ and this means $\mathbf{s}_i\in \mathrm{Maximals}_{\leq_S}\mathrm{Ap}(S,Q)$, for all $i\in I$.\newline
\noindent Conversely, let $\overline{f}=f+\mathfrak{n}=\sum_{i\in I}k_iX^{\mathbf{s}_i}+\mathfrak{n}$ for some finite set $I$, such that $\mathbf{s}_i\in \mathrm{Maximals}_{\leq_S}\mathrm{Ap}(S,Q)$, for all $i\in I$. We need to prove that $\overline{f}\mathfrak{m}\in \mathfrak{n}$. In particular, it is sufficient to prove that $\overline{f}X^{\mathbf{n}_j}\in \mathfrak{n}$ for all $j\in \{1,\ldots,r\}$. If $j\in \{1,\ldots,r\}$, then $\overline{f}X^{\mathbf{n}_j}=\sum_{i\in I}k_iX^{\mathbf{s}_i+\mathbf{n}_j}+\mathfrak{n}$. Since $\mathbf{s}_i\in \mathrm{Maximals}_{\leq_S}\mathrm{Ap}(S,Q)$ for all $i\in I$, we have $\mathbf{s}_i+\mathbf{n}_j\notin \mathrm{Ap}(S,Q)$ for all $i\in I$. This implies $\mathbf{s}_i+\mathbf{n}_j=\mathbf{s}+\mathbf{a}_\ell$ for some $\mathbf{s}\in S$ and $\mathbf{a}_\ell \in Q$. In particular, $X^{\mathbf{s}_i+\mathbf{n}_j}=X^{\mathbf{s}}\cdot X^{\mathbf{a}_\ell}\in \mathfrak{n}$, obtaining $\overline{f}X^{\mathbf{n}_j}\in \mathfrak{n}$.  
\end{proof}

From the above theorem, we get a nice characterization of the type of a semigroup ring associated with an MPD-semigroup in terms of the Ap\'ery set and pseudo-Frobenius elements. The commutative algebraic equivalent definition of an MPD-semigroup is the following.

\begin{definition}
{\rm An affine semigroup $S=\langle \mathbf{n}_1,\ldots,\ldots,\mathbf{n}_{r}\rangle$ is called maximal projective dimension (MPD) semigroup if the projective dimension $\mathrm{pdim}(K[S]) = r-1.$ Equivalently, $\mathrm{depth}(K[S])=1.$ }   
\end{definition}

\begin{corollary}
    Let $S=\langle \mathbf{n}_1,\ldots,\ldots,\mathbf{n}_{r}\rangle$ be an MPD-semigroup and $\bf{x}$ be a non-zero element of $S$. Then
    $$\operatorname{t}(K[S]) = \mid \mathrm{Maximals}_{\leq_S}\mathrm{Ap}(S,\mathbf{x}) \mid = \mid \mathrm{PF}(S) \mid =   \beta_{r-1}(K[S]),$$
    where $\beta_{r-1}(K[S])$ is the last Betti number of the semigroup ring $K[S].$
\end{corollary}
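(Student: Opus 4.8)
The plan is to obtain all four quantities in a single chain of equalities, reading the first two off Theorem~\ref{thm:type_apery} and Proposition~\ref{prop:aperyPF}, and the last from the literature recalled in the Introduction. The only real observation needed is that, for an MPD-semigroup, a \emph{single} monomial already constitutes a maximal regular sequence on $K[S]$. First I would note that $K[S]$ is an integral domain, being a $K$-subalgebra of the polynomial ring $K[X_1,\ldots,X_d]$; hence every nonzero element of $K[S]$ is a nonzerodivisor. In particular, since $\mathbf{x}\in S$ is nonzero, $X^{\mathbf{x}}\in\mathfrak{m}$ is a regular element, so $\{X^{\mathbf{x}}\}$ is a regular sequence contained in $\mathfrak{m}$. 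Because $S$ is an MPD-semigroup we have $\mathrm{depth}(K[S])=1$ (equivalently $\mathrm{pdim}(K[S])=r-1$), so a regular sequence in $\mathfrak{m}$ of length $1$ is already maximal. Thus $Q=\{\mathbf{x}\}$ satisfies the hypotheses of Theorem~\ref{thm:type_apery}, and that theorem yields $\operatorname{t}(K[S])=\operatorname{t}(S)=|\mathrm{Maximals}_{\leq_S}\mathrm{Ap}(S,\mathbf{x})|$, which is the first equality.

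For the second equality I would apply Proposition~\ref{prop:aperyPF}, which, since $\mathcal{C}$-semigroups (and here MPD-semigroups) satisfy its hypothesis, gives $\mathrm{Maximals}_{\leq_S}\mathrm{Ap}(S,\mathbf{x})=\{\mathbf{h}+\mathbf{x}\mid \mathbf{h}\in \operatorname{PF}(S)\}$. The translation map $\mathbf{h}\mapsto \mathbf{h}+\mathbf{x}$ is injective on $\mathbb{N}^d$, so the two sets have equal cardinality; hence $|\mathrm{Maximals}_{\leq_S}\mathrm{Ap}(S,\mathbf{x})|=|\operatorname{PF}(S)|$. For the last equality, $|\operatorname{PF}(S)|=\beta_{r-1}(K[S])$ is exactly \cite[Corollary 7]{garcia-pseudofrobenius} as recalled in the Introduction: for an MPD-semigroup the number of pseudo-Frobenius elements equals the last Betti number of the minimal (graded) free resolution of $K[S]$ over the polynomial ring in $r$ variables, which has length $r-1$ by the Auslander--Buchsbaum formula since $\mathrm{depth}(K[S])=1$. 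Concatenating the three equalities then also records $\operatorname{t}(K[S])=\beta_{r-1}(K[S])$, consistent with the classical relation between the type of a ring and the last Betti number of such a presentation.

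I do not expect a genuine obstacle in this argument; it is essentially bookkeeping built on the two earlier results. The one point that deserves explicit care is the claim that the singleton $\{X^{\mathbf{x}}\}$ is a \emph{maximal} regular sequence on $K[S]$: this combines the domain property of $K[S]$ (every nonzero element of $\mathfrak{m}$ is regular) with the MPD hypothesis $\mathrm{depth}(K[S])=1$, and it is this reduction of $Q$ to a single element that makes Theorem~\ref{thm:type_apery} and Proposition~\ref{prop:aperyPF} directly comparable.
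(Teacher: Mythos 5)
Your proposal is correct and follows essentially the same route as the paper's own proof: observe that $K[S]$ is a domain of depth one, so the single monomial $X^{\mathbf{x}}$ is a maximal regular sequence, then invoke Theorem~\ref{thm:type_apery}, Proposition~\ref{prop:aperyPF}, and \cite[Corollary 7]{garcia-pseudofrobenius} for the three equalities in turn. Your added remarks on the injectivity of the translation $\mathbf{h}\mapsto\mathbf{h}+\mathbf{x}$ and on why a length-one regular sequence is maximal are just slightly more explicit versions of steps the paper leaves implicit.
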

\begin{proof}
    Since $S$ is an MPD-semigroup, we have $\mathrm{depth}(K[S])=1$. Since $K[S]$ is a domain, for any non-zero element $\mathbf{n}\in S$, we have $\{X^\mathbf{n}\}$ is a maximal regular sequence in $K[S]$. Now, the first equality follows from the Theorem~\ref{thm:type_apery}. Second equality follows from the Proposition~\ref{prop:aperyPF}. The last equality follows from \cite[Corollary 7]{garcia-pseudofrobenius},
\end{proof}

\begin{remark}
    { \rm Theorem~\ref{thm:type_apery} generalizes \cite[Proposition 3.3]{jafari2022type}, where the authors prove that the type of a Cohen-Macaulay simplicial affine semigroup is equal to the cardinality of the Ap\'ery set with respect to its extremal rays. If $S=\langle \mathbf{n}_1,\ldots,\mathbf{n}_d, \mathbf{n}_{t+1},\ldots,\mathbf{n}_{d+r}\rangle \subseteq \mathbb{N}^d$ is a simplicial affine semigroup with minimal extremal rays $E=\{\mathbf{n}_1,\ldots,\mathbf{n}_d\}$ then $\{X^{\mathbf{n}_1},\ldots,X^{\mathbf{n}_d}\}$ form a system of parameters for $K[S]$. Since $K[S]$ is a Cohen-Macaulay, then the set $\{X^{\mathbf{n}_1},\ldots,X^{\mathbf{n}_d}\}$ is a maximal regular sequence of $K[S]$ and $\operatorname{t}(S)=|\mathrm{Maximals}_{\leq_S}\mathrm{Ap}(S,E)|$.}
\end{remark}

\begin{remark}
   {\rm For a commutative ring $R$, the \emph{conductor} ideal is defined as $\mathfrak{C}_R=(R:_\mathcal{Q} \overline{R})$, where $\mathcal{Q}$ is the total ring of fractions of $R$ and $\overline{R}$ is the normalization of $R$. In the case when $R=K[S]$ is a semigroup ring, the conductor of $R$ is related to notion of conductor of $S$ introduced before, since $\mathfrak{C}_{K[S]}=(X^\mathbf{c}\mid \mathbf{c}\in \mathfrak{C}_S)$.}
\end{remark}

\section{Some properties of $\cC$-semigroups}
In this section, we study some important properties of $\cC$-semigroups. We start by considering \cite[Theorem 2]{singhal} in a more general context. Recall that a total order $\preceq$ in $\mathbb{N}^d$ is a \textit{relaxed monomial order} if: 1) for all $\mathbf{u},\mathbf{v}$ such that $\mathbf{u}\preceq \mathbf{v}$, then $\mathbf{u}\preceq \mathbf{v}+\mathbf{w}$ for all $\mathbf{w}\in \mathbb{N}^d$; 2) for all $\mathbf{v}\in \mathbb{N}^d$, then $\mathbf{0}\preceq \mathbf{v}$.

\begin{theorem}\label{thm:antichain}
    Let $A\subseteq \mathbb{N}^{d}$ be an antichain with  respect to the order $\leq_{\mathbb{N}^d}$. Define the set $$\mathcal{B}(A)=\{\mathbf{x}\in \mathbb{N}^d\setminus \{\mathbf{0}\} \mid \mathbf{x}\leq_{\mathbb{N}^d} \mathbf{a}\text{ for some }\mathbf{a}\in A\},$$ then for all $\mathbf{a} \in A$ there exists a relaxed monomial order $\preceq$ such that $\mathbf{a}=\max_{\preceq}\mathcal{B}(A)$. 
\end{theorem}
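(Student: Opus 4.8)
The plan is to exhibit, for each fixed $\mathbf{a}\in A$, a concrete relaxed monomial order of a simple ``two‑block'' shape. First I would dispose of the degenerate case $\mathbf{a}=\mathbf{0}$: since $A$ is an antichain and $\mathbf{0}\leq_{\mathbb{N}^d}\mathbf{v}$ for every $\mathbf{v}$, this forces $A=\{\mathbf{0}\}$, hence $\mathcal{B}(A)=\emptyset$ and there is nothing to prove; so assume $\mathbf{a}\neq\mathbf{0}$, and note then $\mathbf{a}\in\mathcal{B}(A)$. The key observation is the following. Let $U=\mathbf{a}+\mathbb{N}^d=\{\mathbf{x}\in\mathbb{N}^d\mid \mathbf{a}\leq_{\mathbb{N}^d}\mathbf{x}\}$ be the principal filter of $\mathbf{a}$, which is upward closed for $\leq_{\mathbb{N}^d}$. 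Then $\mathcal{B}(A)\cap U=\{\mathbf{a}\}$: if $\mathbf{x}\in\mathcal{B}(A)$ and $\mathbf{a}\leq_{\mathbb{N}^d}\mathbf{x}$, then $\mathbf{x}\leq_{\mathbb{N}^d}\mathbf{a}'$ for some $\mathbf{a}'\in A$, whence $\mathbf{a}\leq_{\mathbb{N}^d}\mathbf{x}\leq_{\mathbb{N}^d}\mathbf{a}'$, and since $A$ is an antichain this forces $\mathbf{a}=\mathbf{a}'$ and then $\mathbf{x}=\mathbf{a}$. This is the only place the antichain hypothesis is used.

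Next I would build the order. Fix any relaxed monomial order $\preceq_0$ on $\mathbb{N}^d$ (the lexicographic order will do), and let $\chi\colon\mathbb{N}^d\to\{0,1\}$ be the indicator function of $U$. Define $\preceq$ by declaring $\mathbf{x}\prec\mathbf{y}$ whenever $\chi(\mathbf{x})<\chi(\mathbf{y})$, and $\mathbf{x}\prec\mathbf{y}\iff\mathbf{x}\prec_0\mathbf{y}$ whenever $\chi(\mathbf{x})=\chi(\mathbf{y})$; equivalently, $\preceq$ is the ordinal sum placing the block $\mathbb{N}^d\setminus U$ (internally ordered by $\preceq_0$) entirely below the block $U$ (internally ordered by $\preceq_0$). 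This is visibly a total order. Since $\chi(\mathbf{a})=1$ while $\chi(\mathbf{x})=0$ for every $\mathbf{x}\in\mathcal{B}(A)\setminus\{\mathbf{a}\}$ by the key observation, every such $\mathbf{x}$ satisfies $\mathbf{x}\prec\mathbf{a}$, and therefore $\mathbf{a}=\max_{\preceq}\mathcal{B}(A)$, as required.

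The remaining task is to check that $\preceq$ is indeed a relaxed monomial order, which I expect to be the only place requiring care (though it is just a short case analysis). Condition~(2) holds since $\chi(\mathbf{0})=0\leq\chi(\mathbf{v})$, with $\mathbf{0}\preceq_0\mathbf{v}$ handling the equality case. For condition~(1), suppose $\mathbf{u}\preceq\mathbf{v}$ and $\mathbf{w}\in\mathbb{N}^d$; upward closedness of $U$ gives $\chi(\mathbf{v})\leq\chi(\mathbf{v}+\mathbf{w})$. If $\chi(\mathbf{u})<\chi(\mathbf{v})$, then $\chi(\mathbf{u})<\chi(\mathbf{v}+\mathbf{w})$ and $\mathbf{u}\prec\mathbf{v}+\mathbf{w}$. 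If $\chi(\mathbf{u})=\chi(\mathbf{v})$ with $\mathbf{u}\preceq_0\mathbf{v}$, then either $\chi(\mathbf{u})<\chi(\mathbf{v}+\mathbf{w})$, giving again $\mathbf{u}\prec\mathbf{v}+\mathbf{w}$, or $\chi(\mathbf{u})=\chi(\mathbf{v}+\mathbf{w})$ and $\mathbf{u}\preceq_0\mathbf{v}\preceq_0\mathbf{v}+\mathbf{w}$ by condition~(1) for $\preceq_0$, so $\mathbf{u}\preceq\mathbf{v}+\mathbf{w}$. This exhausts the cases.

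A word on where the genuine difficulty lies: the naive attempt — fix a strictly positive weight vector $\mathbf{w}_0$ and order $\mathbb{N}^d$ by $\mathbf{x}\mapsto\mathbf{w}_0\cdot\mathbf{x}$ with a fixed tie‑breaker — fails in general. For instance, with $A=\{(1,1),(3,0),(0,3)\}$ one has $(1,1),(2,0),(0,2)\in\mathcal{B}(A)$, yet $(1,1)$ maximizes no positive linear functional over $\mathcal{B}(A)$: beating $(2,0)$ would force $w_{0,2}>w_{0,1}$ and beating $(0,2)$ would force $w_{0,1}>w_{0,2}$. The point of the argument above is precisely that relaxed monomial orders are flexible enough — monotonicity being imposed only when one adds to the larger term — that the crude two‑block recipe works, bypassing any convexity considerations.
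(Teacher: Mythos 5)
Your proof is correct. It is worth noting that it is genuinely more self-contained than the paper's own argument: the paper merely observes that $S=\mathbb{N}^d\setminus\mathcal{B}(A)$ is a generalized numerical semigroup with $\mathrm{Maximals}_{\leq_{\mathbb{N}^d}}\mathcal{H}(S)=A$ and then defers entirely to the proof of Theorem~2 of Singhal--Lin, whereas you exhibit the required order explicitly. Your two key steps both check out: the antichain hypothesis gives $\mathcal{B}(A)\cap(\mathbf{a}+\mathbb{N}^d)=\{\mathbf{a}\}$, and the ordinal sum that places the complement of the principal filter $U=\mathbf{a}+\mathbb{N}^d$ below $U$ (each block ordered by a fixed relaxed monomial order $\preceq_0$) satisfies both axioms of a relaxed monomial order, the essential point being that $U$ is upward closed so that $\chi(\mathbf{v})\leq\chi(\mathbf{v}+\mathbf{w})$; your case analysis for condition~(1) is complete. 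The payoff of your route is a verifiable, citation-free proof together with the instructive observation that weight orders cannot work here (your example $A=\{(1,1),(3,0),(0,3)\}$ is valid); the cost is negligible. The only quibble is the degenerate case $\mathbf{a}=\mathbf{0}$: there $\mathcal{B}(A)=\emptyset$ has no maximum, so strictly speaking the statement is vacuously ill-posed rather than trivially true, but this edge case is equally unaddressed in the paper and does not affect the substance.
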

\begin{proof}
   Let us denote $S=\mathbb{N}^d \setminus \mathcal{B}(A)$. Observe that $S$ is trivially a generalized numerical semigroup such that $\mathrm{Maximals}_{\leq_{\mathbb{N}^d}}\mathcal{H}(S)=A$. Now, the proof is essentially the same as the proof of \cite[Theorem 2]{singhal}.
\end{proof}

 If $S$ is a $\mathcal{C}$-semigroup and $\preceq$ is a relaxed monomial order, for the element $\mathbf{F}_\preceq (S)$ we mean the maximum in the set $\mathcal{H}(S)$ with respect to $\preceq$. Let us denote by $\operatorname{FA}(S)=\{\mathbf{x}\in \mathcal{H}(S)\mid \mathbf{x}=\mathbf{F}_{\preceq}(S)\text{ for some relaxed monomial order }\preceq\}$, called the set of \textit{Frobenius allowable} elements of $S$. The set $\operatorname{SG}(S)=\{\mathbf{h}\in \operatorname{PF}(S)\mid 2\mathbf{h}\in S\}$, called the set of \textit{special gaps} of $S$.

\begin{corollary}\label{cor:FA(S)}
    Let $S$ be a $\mathcal{C}$-semigroup. Then $\operatorname{FA}(S)=\mathrm{Maximals}_{\leq_{\mathbb{N}^d}}\mathcal{H}(S)$.
\end{corollary}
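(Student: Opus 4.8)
The plan is to prove the two inclusions separately: $\subseteq$ directly from the axioms of a relaxed monomial order, and $\supseteq$ as an application of Theorem~\ref{thm:antichain}.

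For $\operatorname{FA}(S)\subseteq\mathrm{Maximals}_{\leq_{\mathbb{N}^d}}\mathcal{H}(S)$, I would take $\mathbf{x}\in\operatorname{FA}(S)$, so that $\mathbf{x}=\mathbf{F}_{\preceq}(S)=\max_{\preceq}\mathcal{H}(S)$ for some relaxed monomial order $\preceq$. Suppose $\mathbf{y}\in\mathcal{H}(S)$ with $\mathbf{x}\leq_{\mathbb{N}^d}\mathbf{y}$, and write $\mathbf{y}=\mathbf{x}+\mathbf{w}$ with $\mathbf{w}\in\mathbb{N}^d$. Applying axiom (1) of a relaxed monomial order with $\mathbf{u}=\mathbf{v}=\mathbf{x}$ (legitimate, since $\preceq$ is reflexive, hence $\mathbf{x}\preceq\mathbf{x}$) gives $\mathbf{x}\preceq\mathbf{x}+\mathbf{w}=\mathbf{y}$. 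But $\mathbf{y}\in\mathcal{H}(S)$ and $\mathbf{x}$ is the $\preceq$-maximum of $\mathcal{H}(S)$, so also $\mathbf{y}\preceq\mathbf{x}$; antisymmetry of the total order $\preceq$ then forces $\mathbf{x}=\mathbf{y}$. Hence $\mathbf{x}$ is maximal in $\mathcal{H}(S)$ with respect to $\leq_{\mathbb{N}^d}$.

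For the reverse inclusion, set $A=\mathrm{Maximals}_{\leq_{\mathbb{N}^d}}\mathcal{H}(S)$, which is an antichain in $(\mathbb{N}^d,\leq_{\mathbb{N}^d})$; note $\mathcal{H}(S)$ is finite because $S$ is a $\mathcal{C}$-semigroup (if $\mathcal{H}(S)=\emptyset$ both sides of the claim are empty and there is nothing to prove, so assume $A\neq\emptyset$). First I would record that $\mathcal{H}(S)\subseteq\mathcal{B}(A)$: every $\mathbf{h}\in\mathcal{H}(S)$ is nonzero since $\mathbf{0}\in S$, and, $\mathcal{H}(S)$ being a finite poset, $\mathbf{h}$ lies below some maximal element, i.e. $\mathbf{h}\leq_{\mathbb{N}^d}\mathbf{a}$ for some $\mathbf{a}\in A$, so $\mathbf{h}\in\mathcal{B}(A)$. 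Now fix $\mathbf{a}\in A$. By Theorem~\ref{thm:antichain} there is a relaxed monomial order $\preceq$ with $\mathbf{a}=\max_{\preceq}\mathcal{B}(A)$. Since $\mathbf{a}\in\mathcal{H}(S)\subseteq\mathcal{B}(A)$, the $\preceq$-maximum of the subset $\mathcal{H}(S)$ is again $\mathbf{a}$, that is $\mathbf{a}=\mathbf{F}_{\preceq}(S)$, so $\mathbf{a}\in\operatorname{FA}(S)$.

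I do not anticipate a genuine obstacle, as essentially all the work is done by Theorem~\ref{thm:antichain}. The one point deserving a sentence of care is that the inclusion $\mathcal{H}(S)\subseteq\mathcal{B}(A)$ can be strict — elements of $S$ may sit below a maximal gap — but this is harmless: the order produced by Theorem~\ref{thm:antichain} already has its $\mathcal{B}(A)$-maximum lying inside $\mathcal{H}(S)$, so restricting to the smaller set $\mathcal{H}(S)$ leaves the maximum unchanged.
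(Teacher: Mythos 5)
Your proof is correct and follows essentially the same route as the paper: the forward inclusion via the defining axiom of a relaxed monomial order, and the reverse inclusion by setting $A=\mathrm{Maximals}_{\leq_{\mathbb{N}^d}}\mathcal{H}(S)$ and invoking Theorem~\ref{thm:antichain}. The extra care you take (reflexivity plus antisymmetry in the first inclusion, and the remark that $\mathcal{H}(S)\subseteq\mathcal{B}(A)$ may be strict but the $\preceq$-maximum still lies in $\mathcal{H}(S)$) only makes explicit steps the paper leaves implicit.
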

\begin{proof}
 $\subseteq)$ Let $\mathbf{x}\in \operatorname{FA}(S)$. If there exists $\mathbf{y} \in \mathcal{H}(S)\setminus \{\mathbf{x}\}$ such that $\mathbf{x}\leq_{\mathbb{N}^d} \mathbf{y}$, then $\mathbf{x}\prec \mathbf{y}$ for all relaxed monomial order $\prec$. But this means $\mathbf{x}\notin \operatorname{FA}(S)$, so $\mathbf{x}\in \mathrm{Maximals}_{\leq_{\mathbb{N}^d}}\mathcal{H}(S)$. 

 \noindent$ \supseteq)$ Let us denote $A=\mathrm{Maximals}_{\leq_{\mathbb{N}^d}}\mathcal{H}(S)$. Observe that $A$ is an antichain and $\mathcal{H}(S)\subseteq \mathcal{B}(A)$. Let $\mathbf{a}\in A$, by Theorem~\ref{thm:antichain} we know that there exists a relaxed monomial order $\preceq$ such that $\mathbf{a}=\max_\preceq \mathcal{B}(A)$. In particular, for all $\mathbf{h}\in \mathcal{H}(S)$ we have $\mathbf{h}\preceq \mathbf{a}$. So $\mathbf{a}=\mathbf{F}_\preceq (S)$, that is, $\mathbf{a}\in \operatorname{FA}(S)$. 
\end{proof}

\begin{proposition}\label{prop:containement}
    For a $\mathcal{C}$-semigroup $S$, we have the following:
    $$ \operatorname{FA}(S) \subseteq \mathrm{Maximals}_{\leq_{\mathcal{C}}}\mathcal{H}(S) \subseteq \operatorname{SG}(S) \subseteq \operatorname{PF}(S).$$
\end{proposition}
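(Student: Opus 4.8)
The plan is to prove the chain of three inclusions one at a time, in each case relying on the elementary identity $\mathcal{H}(S)=\mathcal{C}\setminus S$, valid because $\mathcal{C}=\mathrm{cone}(S)\cap\mathbb{N}^d$ and $S\subseteq\mathcal{C}$, together with the observation that $\mathcal{C}\subseteq\mathbb{N}^d$ forces the relation $\leq_{\mathcal{C}}$ to be contained in $\leq_{\mathbb{N}^d}$ (if $\mathbf{y}-\mathbf{x}\in\mathcal{C}$ then $\mathbf{y}-\mathbf{x}\in\mathbb{N}^d$).

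For the first inclusion $\operatorname{FA}(S)\subseteq\mathrm{Maximals}_{\leq_{\mathcal{C}}}\mathcal{H}(S)$, I would invoke Corollary~\ref{cor:FA(S)} to replace $\operatorname{FA}(S)$ by $\mathrm{Maximals}_{\leq_{\mathbb{N}^d}}\mathcal{H}(S)$, and then argue by contraposition: if $\mathbf{h}\in\mathcal{H}(S)$ is not $\leq_{\mathcal{C}}$-maximal, a witness $\mathbf{h}'\in\mathcal{H}(S)$ with $\mathbf{h}\neq\mathbf{h}'$ and $\mathbf{h}'-\mathbf{h}\in\mathcal{C}$ satisfies $\mathbf{h}'-\mathbf{h}\in\mathbb{N}^d$, so $\mathbf{h}$ is not $\leq_{\mathbb{N}^d}$-maximal either. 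Hence every element of $\mathrm{Maximals}_{\leq_{\mathbb{N}^d}}\mathcal{H}(S)$ lies in $\mathrm{Maximals}_{\leq_{\mathcal{C}}}\mathcal{H}(S)$.

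For the second inclusion $\mathrm{Maximals}_{\leq_{\mathcal{C}}}\mathcal{H}(S)\subseteq\operatorname{SG}(S)$, fix a $\leq_{\mathcal{C}}$-maximal gap $\mathbf{h}$ and verify the two defining conditions of $\operatorname{SG}(S)$. First, $\mathbf{h}\in\operatorname{PF}(S)$: for $\mathbf{s}\in S\setminus\{\mathbf{0}\}$ the sum $\mathbf{h}+\mathbf{s}$ lies in $\mathrm{cone}(S)\cap\mathbb{N}^d=\mathcal{C}$, so if it were not in $S$ it would be a gap with $(\mathbf{h}+\mathbf{s})-\mathbf{h}=\mathbf{s}\in\mathcal{C}$, i.e.\ $\mathbf{h}<_{\mathcal{C}}\mathbf{h}+\mathbf{s}$, contradicting maximality; thus $\mathbf{h}+\mathbf{s}\in S$. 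Second, $2\mathbf{h}\in S$: again $2\mathbf{h}\in\mathcal{C}$, and $\mathbf{h}\in\mathcal{H}(S)\subseteq\mathcal{C}$ gives $2\mathbf{h}-\mathbf{h}=\mathbf{h}\in\mathcal{C}$ with $2\mathbf{h}\neq\mathbf{h}$ (as $\mathbf{0}\in S$ forces $\mathbf{h}\neq\mathbf{0}$), so if $2\mathbf{h}$ were a gap we would again contradict $\leq_{\mathcal{C}}$-maximality of $\mathbf{h}$. Hence $\mathbf{h}\in\operatorname{SG}(S)$. The third inclusion $\operatorname{SG}(S)\subseteq\operatorname{PF}(S)$ is immediate from the definition $\operatorname{SG}(S)=\{\mathbf{h}\in\operatorname{PF}(S)\mid 2\mathbf{h}\in S\}$.

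The argument is a direct chase through the definitions, so no step presents a genuine difficulty; the only point deserving care is checking that the auxiliary elements $\mathbf{h}+\mathbf{s}$ and $2\mathbf{h}$ actually lie in $\mathcal{C}$ (using closure of $\mathrm{cone}(S)$ under addition and under multiplication by positive scalars) and that the comparabilities invoked are strict, which is ensured by $\mathbf{s}\neq\mathbf{0}$ and by $\mathbf{h}\neq\mathbf{0}$.
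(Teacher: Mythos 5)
Your proof is correct and follows essentially the same route as the paper: the first inclusion is reduced via Corollary~\ref{cor:FA(S)} to the observation that $\leq_{\mathcal{C}}$ refines to $\leq_{\mathbb{N}^d}$, the middle inclusion is the same maximality argument applied to $\mathbf{h}+\mathbf{s}$ and $2\mathbf{h}$ (both of which lie in $\mathcal{C}$), and the last is definitional. Your extra care about strictness ($\mathbf{s}\neq\mathbf{0}$, $\mathbf{h}\neq\mathbf{0}$) is a minor elaboration the paper leaves implicit.
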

\begin{proof}
    The first inclusion follows from Corollary~\ref{cor:FA(S)}, and the last inclusion is obvious. We prove the middle one here.  Let $\mathbf{x}\in \mathrm{Maximals}_{\leq_{\mathcal{C}}}\mathcal{H}(S)$. If $\mathbf{s}\in S\setminus \{\mathbf{0}\}$ then $\mathbf{x}+\mathbf{s}\in \mathcal{C}$ and $\mathbf{x}\leq_{\mathcal{C}} \mathbf{x}+\mathbf{s}$. By maximality property of $\mathbf{x}$, we have $\mathbf{x}+\mathbf{s}\notin \mathcal{H}(S)$. So, $\mathbf{x}+\mathbf{s}\in S$. Moreover, we have also $2\mathbf{x}\in \mathcal{C}$ and $\mathbf{x}\leq_{\mathcal{C}} 2\mathbf{x}$. Hence, by the same argument as before, $2\mathbf{x}\in S$. Therefore, we have $\mathbf{x}\in \operatorname{SG}(S)$. 
\end{proof}

\begin{example} \label{exa:maximals}
\rm
We give some examples of different possible inclusions stated in Proposition~\ref{prop:containement}.
\begin{enumerate}
\item[(a)] Let $S=\langle A \rangle\subseteq \mathbb{N}^3$, where $A$ is the following set:   
\begin{align*}   
A= \left\lbrace
\begin{array}{c}
    (2, 0, 0), (4, 2, 4), (0, 1, 0), (3, 0, 0), (6, 3, 6), (3, 1, 1), \\ [1mm]
    (4, 1, 1), (3, 1, 2), (1, 1, 0), (3, 2, 3), (1, 2, 1) 
\end{array}
\right\rbrace .
\end{align*} 
Actually, $S$ is a $\mathcal{C}$-semigroup, where $\mathcal{C}=\mathrm{Span}_{\mathbb{Q}_+}(\{(1, 0, 0), (2, 1, 2), (0, 1, 0)\})\cap \mathbb{N}^3$. In particular, the set of gaps of $S$ is the following:
\begin{align*}  
\cH(S)= \left\lbrace 
\begin{array}{c}
(1, 0, 0), (1, 1, 1), (2, 1, 1), (2, 1, 2), (2, 2, 1), (2, 2, 2), (2, 3, 2),\\ [1mm] (4, 1, 2),(4, 2, 3), (5, 2, 4), (5, 3, 5), (8, 4, 7) 
\end{array}
 \right\rbrace.
\end{align*}   
In Figure~\ref{Fig:exampleMaximals(a)}, we have a graphic representation of the semigroup $S$: the cone $\cC$ consists of all integer points inside the colored area, and the marked points represent the gaps. The set of points in $\mathrm{Maximals}_{\leq_{\mathcal{C}}}\mathcal{H}(S)$ are marked in blue. In this case, we have $\operatorname{FA}(S)=\{(8,4,7)\}$, while $$\mathrm{Maximals}_{\leq_{\mathcal{C}}}\mathcal{H}(S) = \operatorname{SG}(S) = \operatorname{PF}(S)=\{( 2, 2, 1 ), ( 2, 3, 2 ), ( 4, 1, 2 ), ( 8, 4, 7 )\} .$$
 
\item[(b)]  Let $\mathcal{C}=\mathrm{Span}_{\mathbb{Q}_+}(\{(1,2), (3,1)\})\cap \mathbb{N}^2$ and 
$$S= \mathcal{C}\setminus\{ ( 1, 1 ), ( 1, 2 ), ( 2, 1 ), ( 2, 2 ), ( 2, 3 ), ( 3, 1 ), ( 3, 2 ), ( 3, 3 ) \}.$$


\noindent It is possible to check that $S$ is really a $\mathcal{C}$-semigroup (see for instance \cite[Proposition 16]{garcia_someproperties}) and it is possible to compute that:
\begin{itemize}
\item $\operatorname{FA}(S)=\{(3,3)\}$.
\item $\mathrm{Maximals}_{\leq_{\mathcal{C}}}\mathcal{H}(S)=\{( 2, 3 ), ( 3, 1 ), ( 3, 2 ), ( 3, 3 )\}.$
\item $\operatorname{SG}(S)=\cH(S)\setminus \{(1,1)\}$ and $\operatorname{PF}(S)=\cH(S)$.
\end{itemize}
Hence we have $ \operatorname{FA}(S) \subsetneq \mathrm{Maximals}_{\leq_{\mathcal{C}}}\mathcal{H}(S) \subsetneq \operatorname{SG}(S) \subsetneq \operatorname{PF}(S).$ A graphic representation of $S$ is provided in Figure~\ref{Fig:exampleMaximals(b)}, where the set $\mathrm{Maximals}_{\leq_{\mathcal{C}}}\mathcal{H}(S)$ consists of the blue points, while the other gaps are marked in black.
 
\noindent Furthermore, if we consider $T=S\cup \{(2,1)\}$, then $T$ is a $\mathcal{C}$-semigroup such that $ \operatorname{FA}(T) \subsetneq \mathrm{Maximals}_{\leq_{\mathcal{C}}}\mathcal{H}(T) \subsetneq \operatorname{SG}(T) = \operatorname{PF}(T).$ 
\end{enumerate} 
\end{example}

\begin{figure}[h]
\centering
\subfloat[The semigroup $S$ in Example~\ref{exa:maximals}(a)]{\includegraphics[scale=0.26]{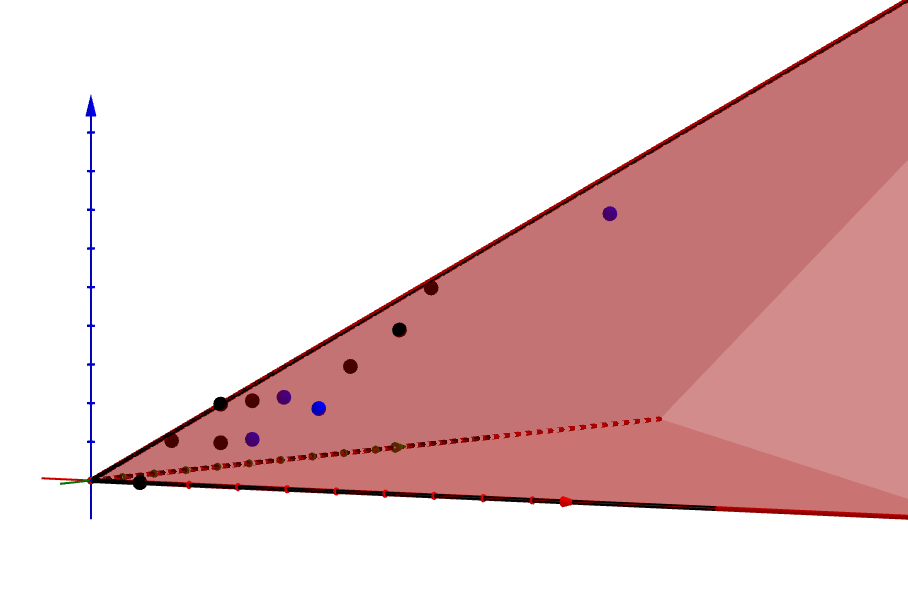} \label{Fig:exampleMaximals(a)}}\qquad \qquad
\subfloat[The semigroup $S$ in Example~\ref{exa:maximals}(b)]{\includegraphics[scale=1.1]{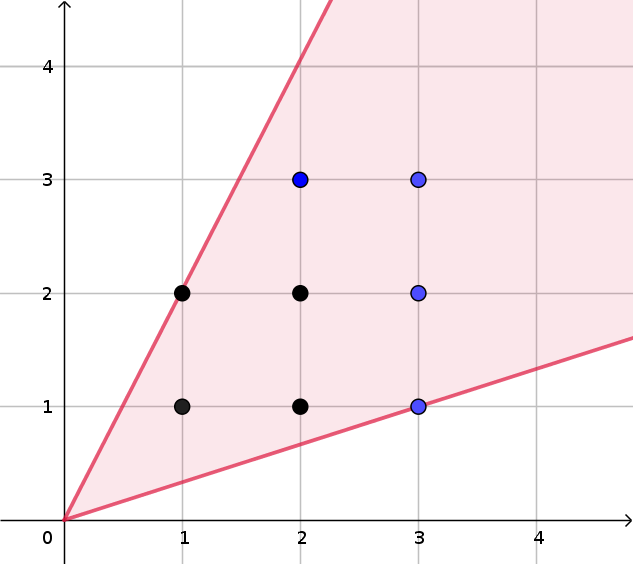}\label{Fig:exampleMaximals(b)}}
\caption{The elements belonging to the semigroups are all integer points inside the colored area, except for the marked points, which represent the gaps.}
\end{figure}

\noindent Recall that a $\mathcal{C}$-semigroup $S$ is called \emph{symmetric} if there exists a term order $\preceq$ such that $\operatorname{PF}(S)=\{\max_{\preceq} \mathcal{H}(S)\}$, while it is called \textit{almost symmetric}  if there exist a term order $\preceq$ such that $\mathbf{F}_\preceq(S) -\mathbf{f} \in \operatorname{PF}(S)$ for all $\mathbf{f}\in \operatorname{PF(S)}\setminus \{\mathbf{F}_\preceq (S)\}$(see for instance \cite{op1, garcia_someproperties}). Note that, with this definition, a symmetric $\mathcal{C}$-semigroup is always almost symmetric. Here and in all the paper, if the maximum of the gapset $\mathcal{H}(S)$ exists with respect to $\leq_{\mathcal{C}}$, we denote it by $\mathbf{F}_{\mathcal{C}}(S)$.

\begin{proposition}\label{prop:C-symmeric}
  Let $S$ be a $\mathcal{C}$-semigroup. Then $S$ is symmetric if and only if $\mathrm{PF}(S) = \{\mathbf{F}_{\mathcal{C}}(S)\}.$   
\end{proposition}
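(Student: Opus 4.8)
The plan is to prove both implications by carefully unpacking the definitions, with the non-trivial content being that the term-order-dependent notion of ``symmetric'' can be detected by the purely combinatorial condition $\mathrm{PF}(S)=\{\mathbf{F}_{\mathcal{C}}(S)\}$, where the latter maximum is taken with respect to $\leq_{\mathcal{C}}$. First I would handle the easy direction: suppose $\mathrm{PF}(S)=\{\mathbf{F}_{\mathcal{C}}(S)\}$. Since the maximum of $\mathcal{H}(S)$ with respect to $\leq_{\mathcal{C}}$ exists and equals $\mathbf{F}_{\mathcal{C}}(S)$, in particular $\mathbf{F}_{\mathcal{C}}(S)$ is a maximal element of $\mathcal{H}(S)$ with respect to $\leq_{\mathbb{N}^d}$ (if some gap dominated it in $\leq_{\mathbb{N}^d}$ it would dominate it in $\leq_{\mathcal{C}}$ too, contradicting maximality). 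By Corollary~\ref{cor:FA(S)}, $\mathrm{Maximals}_{\leq_{\mathbb{N}^d}}\mathcal{H}(S)=\operatorname{FA}(S)$, so there is a relaxed monomial order $\preceq$ with $\mathbf{F}_\preceq(S)=\mathbf{F}_{\mathcal{C}}(S)$; but then $\operatorname{PF}(S)=\{\mathbf{F}_{\mathcal{C}}(S)\}=\{\max_\preceq \mathcal{H}(S)\}$, which is exactly the definition of symmetric.

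For the converse, suppose $S$ is symmetric, so there is a term order $\preceq$ with $\operatorname{PF}(S)=\{\max_\preceq\mathcal{H}(S)\}=\{\mathbf{F}_\preceq(S)\}$. I need to show that this single pseudo-Frobenius element is in fact the $\leq_{\mathcal{C}}$-maximum of $\mathcal{H}(S)$. The key step is to show that every gap $\mathbf{h}\in\mathcal{H}(S)$ satisfies $\mathbf{h}\leq_{\mathcal{C}}\mathbf{F}_\preceq(S)$. This is where I expect the main obstacle to lie, and the natural approach is a descending induction in $\preceq$ (or on the finite set $\mathcal{H}(S)$): take any gap $\mathbf{h}\neq\mathbf{F}_\preceq(S)$. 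Since $\mathbf{h}\notin\operatorname{PF}(S)$, there exists $\mathbf{s}\in S\setminus\{\mathbf{0}\}$ with $\mathbf{h}+\mathbf{s}\notin S$; as $\mathbf{h}+\mathbf{s}\in\mathrm{cone}(S)\cap\mathbb{N}^d=\mathcal{C}$, we get $\mathbf{h}+\mathbf{s}\in\mathcal{H}(S)$, and by the relaxed monomial order axioms $\mathbf{h}\prec\mathbf{h}+\mathbf{s}$. So from any non-maximal gap we can strictly ascend (in $\preceq$) within $\mathcal{H}(S)$ by adding a nonzero semigroup element; since $\mathcal{H}(S)$ is finite, iterating must terminate at $\mathbf{F}_\preceq(S)$, and along the way every step adds an element of $S\subseteq\mathcal{C}$. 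Hence $\mathbf{F}_\preceq(S)-\mathbf{h}$ is a sum of elements of $S$, so lies in $S\subseteq\mathcal{C}$, giving $\mathbf{h}\leq_{\mathcal{C}}\mathbf{F}_\preceq(S)$.

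Having established $\mathbf{h}\leq_{\mathcal{C}}\mathbf{F}_\preceq(S)$ for every $\mathbf{h}\in\mathcal{H}(S)$, it follows that $\mathbf{F}_\preceq(S)$ is the maximum of $\mathcal{H}(S)$ with respect to $\leq_{\mathcal{C}}$, i.e.\ $\mathbf{F}_{\mathcal{C}}(S)$ exists and equals $\mathbf{F}_\preceq(S)$; therefore $\mathrm{PF}(S)=\{\mathbf{F}_{\mathcal{C}}(S)\}$, completing the proof. The only subtlety to be careful about is that the termination argument genuinely produces a chain inside the finite set $\mathcal{H}(S)$ that is strictly increasing in $\preceq$ — this is guaranteed because each step lands in $\mathcal{C}\setminus S$ (it cannot land in $S$, since we chose $\mathbf{s}$ so that $\mathbf{h}+\mathbf{s}\notin S$) unless we have already reached the top, and $\preceq$ being a total order forbids cycles. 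I would also remark that this same ascent argument is essentially the standard fact that in an MPD-semigroup every gap lies below some pseudo-Frobenius element in $\leq_S$, specialized here to the case of a unique pseudo-Frobenius element.
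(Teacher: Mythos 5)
Your overall structure is right and the statement does follow, but the justification of the key step in your ``easy'' direction is backwards. You claim that if some gap dominated $\mathbf{F}_{\mathcal{C}}(S)$ in $\leq_{\mathbb{N}^d}$ it would dominate it in $\leq_{\mathcal{C}}$ too. The implication actually runs the other way: $\mathbf{x}\leq_{\mathcal{C}}\mathbf{y}$ means $\mathbf{y}-\mathbf{x}\in\mathcal{C}\subseteq\mathbb{N}^d$ and hence implies $\mathbf{x}\leq_{\mathbb{N}^d}\mathbf{y}$, while the converse fails whenever $\mathcal{C}\subsetneq\mathbb{N}^d$ (in Example~\ref{exa:maximals}(b) one has $(2,3)\leq_{\mathbb{N}^2}(3,3)$ but $(3,3)-(2,3)=(1,0)\notin\mathcal{C}$). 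The claim you want is nevertheless true, for the correct direction of the implication: since $\mathbf{F}_{\mathcal{C}}(S)$ is the $\leq_{\mathcal{C}}$-maximum, every gap $\mathbf{h}$ satisfies $\mathbf{h}\leq_{\mathcal{C}}\mathbf{F}_{\mathcal{C}}(S)$, hence $\mathbf{h}\leq_{\mathbb{N}^d}\mathbf{F}_{\mathcal{C}}(S)$, so $\mathbf{F}_{\mathcal{C}}(S)$ is the $\leq_{\mathbb{N}^d}$-\emph{maximum} of $\mathcal{H}(S)$, not merely a maximal element. Once you have this, you should drop the detour through Corollary~\ref{cor:FA(S)}: that corollary only hands you a \emph{relaxed} monomial order, whereas the definition of symmetric asks for a term order, so as written there is a mismatch. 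Instead argue, as the paper does, that every term order refines $\leq_{\mathbb{N}^d}$, so $\mathbf{F}_{\mathcal{C}}(S)=\max_{\preceq}\mathcal{H}(S)$ for \emph{every} term order $\preceq$, and symmetry follows.

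Your converse direction is correct but takes a genuinely different route from the paper's. The paper simply observes, via Proposition~\ref{prop:containement}, that $\mathrm{Maximals}_{\leq_{\mathcal{C}}}\mathcal{H}(S)\subseteq\operatorname{PF}(S)$, so a singleton $\operatorname{PF}(S)$ forces a unique $\leq_{\mathcal{C}}$-maximal gap, which in a finite poset is the maximum. Your ascent argument (repeatedly adding $\mathbf{s}\in S\setminus\{\mathbf{0}\}$ with $\mathbf{h}+\mathbf{s}\notin S$, terminating because $\preceq$ strictly increases along the chain inside the finite set $\mathcal{H}(S)$) proves the stronger statement that every gap is $\leq_S$-below the unique pseudo-Frobenius element; it is longer but self-contained, and, as you note, it is the standard MPD fact that the paper invokes elsewhere without proof. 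Both approaches are valid; the paper's is shorter because it reuses the containment chain already established.
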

\begin{proof}
Suppose $S$ is symmetric. We first prove that  $\mathbf{F}_{\mathcal{C}}(S)$ exists. Suppose there exist $\mathbf{f}_1,\,\mathbf{f}_2 \in \mathrm{Maximals}_{\leq_\mathcal{C}}(\mathcal{H}(S))$ such that $\mathbf{f}_1\neq \mathbf{f}_2$. By Proposition~\ref{prop:containement} we have $\{\mathbf{f}_1,\mathbf{f}_2\}\subseteq \operatorname{PF}(S)$, a contradiction. Hence, $\mathbf{F}_\mathcal{C}(S)$ exists, and in particular, it is a pseudo-Frobenius element. Therefore, $\mathrm{PF}(S) = \{\mathbf{F}_{\mathcal{C}}(S)\}.$ Conversely, assume $\mathrm{PF}(S) = \{\mathbf{F}_{\mathcal{C}}(S)\}.$ In particular, by Proposition~\ref{prop:containement}, we have $\mathbf{F}_\mathcal{C}(S)=\max_{\leq_{\mathbb{N}^d}}\mathcal{H}(S)$. Let $\preceq$ be any term order. If $\mathbf{h}\in \mathcal{H}(S)$, then $\mathbf{h}\leq_{\mathbb{N}^d} \mathbf{F}_\mathcal{C}(S)$. So, the inequality $\mathbf{h}\preceq \mathbf{F}_\mathcal{C}(S)$ holds. In particular, $\mathbf{F}_\mathcal{C}(S)=\max_\preceq \mathcal{H}(S)$.
\end{proof} 

\begin{proposition}\label{prop:almostSymmetric} \label{prop:C-almost-symmetric}
 Let $S$ be a $\mathcal{C}$-semigroup. Then $S$ is almost symmetric if and only if $\mathbf{F}_{\mathcal{C}}(S) - \mathbf{f} \in \mathrm{PF}(S)$ for all $\mathbf{f} \in \mathrm{PF}(S)\setminus \{ \mathbf{F}_{\mathcal{C}}(S)\}.$ 
\end{proposition}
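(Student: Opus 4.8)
The strategy is to show that, whenever a term order realizes almost symmetry, its associated Frobenius element must coincide with the cone-maximum $\mathbf{F}_{\mathcal{C}}(S)$ of the gapset, and, conversely, that once $\mathbf{F}_{\mathcal{C}}(S)$ exists \emph{every} term order sees it as its Frobenius element; the bridge between the combinatorial orders is Proposition~\ref{prop:containement} together with the trivial containment $\operatorname{PF}(S)\subseteq\mathcal{H}(S)\subseteq\mathcal{C}$. One should bear in mind that the right-hand condition tacitly presupposes that $\mathbf{F}_{\mathcal{C}}(S)=\max_{\leq_{\mathcal{C}}}\mathcal{H}(S)$ exists, so the forward implication has to produce it, not just use it.

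For the forward implication, suppose $S$ is almost symmetric via a term order $\preceq$ and write $\mathbf{F}=\mathbf{F}_\preceq(S)\in\mathcal{H}(S)$. I would pick an arbitrary $\mathbf{g}\in\mathrm{Maximals}_{\leq_{\mathcal{C}}}\mathcal{H}(S)$, a nonempty set since $\mathcal{H}(S)$ is finite and nonempty, and note $\mathbf{g}\in\operatorname{PF}(S)$ by Proposition~\ref{prop:containement}. Assuming $\mathbf{g}\neq\mathbf{F}$, almost symmetry forces $\mathbf{F}-\mathbf{g}\in\operatorname{PF}(S)\subseteq\mathcal{H}(S)\subseteq\mathcal{C}$, hence $\mathbf{g}\leq_{\mathcal{C}}\mathbf{F}$ with $\mathbf{F}\in\mathcal{H}(S)$ distinct from $\mathbf{g}$, contradicting the $\leq_{\mathcal{C}}$-maximality of $\mathbf{g}$. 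Therefore $\mathrm{Maximals}_{\leq_{\mathcal{C}}}\mathcal{H}(S)=\{\mathbf{F}\}$, and since a finite poset with a unique maximal element has that element as its maximum, $\mathbf{F}_{\mathcal{C}}(S)$ exists and equals $\mathbf{F}_\preceq(S)$. Consequently $\operatorname{PF}(S)\setminus\{\mathbf{F}_\preceq(S)\}=\operatorname{PF}(S)\setminus\{\mathbf{F}_{\mathcal{C}}(S)\}$, and the almost-symmetry hypothesis becomes verbatim the asserted condition for $\mathbf{F}_{\mathcal{C}}(S)$.

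For the converse, assume $\mathbf{F}_{\mathcal{C}}(S)$ exists and $\mathbf{F}_{\mathcal{C}}(S)-\mathbf{f}\in\operatorname{PF}(S)$ for every $\mathbf{f}\in\operatorname{PF}(S)\setminus\{\mathbf{F}_{\mathcal{C}}(S)\}$, and fix any term order $\preceq$. For each $\mathbf{h}\in\mathcal{H}(S)$ one has $\mathbf{F}_{\mathcal{C}}(S)-\mathbf{h}\in\mathcal{C}\subseteq\mathbb{N}^d$, so $\mathbf{h}\leq_{\mathbb{N}^d}\mathbf{F}_{\mathcal{C}}(S)$ and hence $\mathbf{h}\preceq\mathbf{F}_{\mathcal{C}}(S)$, since any term order refines $\leq_{\mathbb{N}^d}$ (exactly as already used in the proof of Proposition~\ref{prop:C-symmeric}). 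As $\mathbf{F}_{\mathcal{C}}(S)\in\mathcal{H}(S)$, this gives $\mathbf{F}_\preceq(S)=\mathbf{F}_{\mathcal{C}}(S)$, and the hypothesis now reads $\mathbf{F}_\preceq(S)-\mathbf{f}\in\operatorname{PF}(S)$ for all $\mathbf{f}\in\operatorname{PF}(S)\setminus\{\mathbf{F}_\preceq(S)\}$, so $S$ is almost symmetric with witness $\preceq$.

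I do not expect a genuine obstacle here: the argument is order-theoretic bookkeeping built on Proposition~\ref{prop:containement}. The one point requiring care is the forward direction, where $\mathbf{F}_{\mathcal{C}}(S)$ is not given in advance and must be extracted by showing that the set of $\leq_{\mathcal{C}}$-maximal gaps collapses to a single point; the minor degenerate cases ($\mathbf{g}=\mathbf{F}$, or $\operatorname{PF}(S)$ a singleton so that the almost-symmetry condition is vacuous) are absorbed by the same reasoning without any change.
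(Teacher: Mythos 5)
Your proof is correct, and the overall skeleton matches the paper's: in both directions the crux is to show that the term-order Frobenius element coincides with $\mathbf{F}_{\mathcal{C}}(S)$, after which the almost-symmetry condition transfers verbatim; the converse is handled identically (every term order refines $\leq_{\mathbb{N}^d}$, hence sees $\mathbf{F}_{\mathcal{C}}(S)$ as the maximum gap). Where you genuinely diverge is in the forward direction. The paper proves the stronger statement that \emph{every} gap $\mathbf{h}$ satisfies $\mathbf{h}\leq_{\mathcal{C}}\mathbf{F}_\preceq(S)$, splitting into the case $\mathbf{h}\in\operatorname{PF}(S)$ (where almost symmetry gives $\mathbf{F}_\preceq(S)-\mathbf{h}\in\operatorname{PF}(S)\subseteq\mathcal{C}$) and the case $\mathbf{h}\notin\operatorname{PF}(S)$, for which it invokes an external result (\cite[Proposition 4.7]{op2}) asserting $\mathbf{F}_\preceq(S)-\mathbf{h}\in S$. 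You instead restrict attention to the $\leq_{\mathcal{C}}$-maximal gaps, which lie in $\operatorname{PF}(S)$ by Proposition~\ref{prop:containement}, and derive a contradiction with maximality unless each such gap equals $\mathbf{F}_\preceq(S)$; the finiteness of $\mathcal{H}(S)$ then upgrades the unique maximal element to a maximum. Your route is self-contained (no appeal to the external proposition) and is careful about the point the paper leaves implicit, namely that the existence of $\mathbf{F}_{\mathcal{C}}(S)$ must be \emph{established} in the forward direction rather than assumed; the paper's route buys the slightly stronger intermediate fact that $\mathbf{F}_\preceq(S)$ dominates all gaps, not just the maximal ones, but that extra information is not needed for the equivalence.
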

\begin{proof}
Suppose $S$ is almost symmetric. Then there exists a term order $\preceq$ such that $\mathbf{F}_\preceq(S) -\mathbf{f} \in \operatorname{PF}(S)$ for all $\mathbf{f}\in \operatorname{PF(S)}\setminus \{\mathbf{F}_\preceq (S)\}$. In particular, if $\mathbf{h}\in \mathcal{H}(S)\cap \operatorname{PF}(S)$, then $\mathbf{h}\leq_{\mathcal{C}} \mathbf{F}_\preceq (S)$. Moreover, by \cite[Proposition 4.7]{op2}, if $\mathbf{h}\in \mathcal{H}(S)\setminus \operatorname{PF}(S)$ we have $\mathbf{F}_\preceq(S)-\mathbf{h}\in S$, that is, $\mathbf{h}\leq_\mathcal{C} \mathbf{F}_\preceq(S)$. Therefore, $\mathbf{F}_\preceq(S)=\mathbf{F}_\mathcal{C}(S)$ and hence $\mathbf{F}_{\mathcal{C}}(S) - \mathbf{f} \in \mathrm{PF}(S)$ for all $\mathbf{f} \in \mathrm{PF}(S)\setminus \{ \mathbf{F}_{\mathcal{C}}(S)\}.$ Conversely, by Proposition~\ref{prop:containement}, we have $\mathbf{F}_\mathcal{C}(S)=\max_{\leq_{\mathbb{N}^d}}\mathcal{H}(S)$. Let $\preceq$ be any term order. If $\mathbf{h}\in \mathcal{H}(S)$, then $\mathbf{h}\leq_{\mathbb{N}^d} \mathbf{F}_\mathcal{C}(S)$. So, the inequality $\mathbf{h}\preceq \mathbf{F}_\mathcal{C}(S)$ holds. In particular, $\mathbf{F}_\mathcal{C}(S)=\max_\preceq \mathcal{H}(S)$.
\end{proof}

\begin{remark} \rm
  By Propositions~\ref{prop:C-symmeric} and  \ref{prop:C-almost-symmetric}, we can argue that if $S$ is symmetric or almost symmetric $\mathcal{C}$-semigroup, then $\mathbf{F}_\mathcal{C}(S)=\mathbf{F}_\preceq(S)$ for all term orders $\preceq$. In particular, these notions depend on the semigroup only, not on the choice of a term order.  
\end{remark}

Recall that a $\mathcal{C}$-semigroup is said to be \emph{irreducible} if it is not possible to express it as an intersection of two $\cC$-semigroups properly containing it. In particular, every $\cC$-semigroup can be expressed as an intersection of finitely many irreducible $\cC$-semigroups. Moreover, it is known that if $S$ is an irreducible $\cC$-semigroup, then $S$ is symmetric or almost symmetric with $\operatorname{t}(S)=2$ (in this case, $S$ is also called \emph{pseudo-symmetric}). In particular, when $S$ is irreducible then $|\operatorname{SG}(S)|=1$ (see \cite{garcia-pseudofrobenius}). As a consequence, $\mathrm{Maximals}_{\leq_\cC}\cH(S)=\operatorname{SG}(S)$. In the next result, we consider a more general property.

\begin{theorem} \label{thm:equivalent-quasiIrred}
    Let $S$ be a $\mathcal{C}$-semigroup. Then the following are equivalent:

    \begin{enumerate}
        \item[(i)] $ \mathrm{Maximals}_{\leq_\cC} \cH(S)=\operatorname{SG}(S).$
        
      \item[(ii)] For all $\mathbf{f}\in \operatorname{PF}(S)$, then $\mathbf{f}\in \mathrm{Maximals}_{\leq_\cC} \cH(S)$ or $2\mathbf{f}\in \mathrm{Maximals}_{\leq_\cC} \cH(S)$. 

      \item[(iii)] For all $\mathbf{h}\in \mathcal{H}(S)$, $2\mathbf{h} \in \mathrm{Maximals}_{\leq_\cC} \cH(S)$ or there exists $\mathbf{F}\in \mathrm{Maximals}_{\leq_\cC} \cH(S)$ such that $\mathbf{F}-\mathbf{h}\in S$.

    \end{enumerate}
\end{theorem}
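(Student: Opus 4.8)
The plan is to prove the cyclic chain of implications $(i)\Rightarrow(ii)\Rightarrow(iii)\Rightarrow(i)$, exploiting the containment $\operatorname{FA}(S)\subseteq \mathrm{Maximals}_{\leq_\cC}\cH(S)\subseteq \operatorname{SG}(S)\subseteq \operatorname{PF}(S)$ from Proposition~\ref{prop:containement} and the defining properties of special gaps and pseudo-Frobenius elements. The key technical tool throughout will be the maximality argument used in Proposition~\ref{prop:containement}: if $\mathbf{x}\in \mathrm{Maximals}_{\leq_\cC}\cH(S)$ and $\mathbf{s}\in S\setminus\{\mathbf{0}\}$, then $\mathbf{x}+\mathbf{s}\in S$ (because $\mathbf{x}\leq_\cC \mathbf{x}+\mathbf{s}\in\cC$ and $\mathbf{x}$ is maximal), and in particular $2\mathbf{x}\in S$.

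For $(i)\Rightarrow(ii)$: let $\mathbf{f}\in \operatorname{PF}(S)$. If $\mathbf{f}\in \mathrm{Maximals}_{\leq_\cC}\cH(S)$ we are done, so assume not. We must show $2\mathbf{f}\in \mathrm{Maximals}_{\leq_\cC}\cH(S)$. The idea is to pick a maximal $\mathbf{x}\in \mathrm{Maximals}_{\leq_\cC}\cH(S)$ with $\mathbf{f}\leq_\cC \mathbf{x}$, write $\mathbf{x}=\mathbf{f}+\mathbf{c}$ with $\mathbf{c}\in\cC$; since $\mathbf{f}$ is pseudo-Frobenius and $\mathbf{x}\notin S$ we must have $\mathbf{c}\notin S$, so $\mathbf{c}\in\cH(S)$, and then — using $(i)$, which forces $\mathbf{x}\in\operatorname{SG}(S)$, hence $2\mathbf{x}\in S$ — one deduces that the only way $\mathbf{x}$ can fail to equal $\mathbf{f}$ or $2\mathbf{f}$ leads to a contradiction with maximality of $\mathbf{x}$; so $\mathbf{x}=2\mathbf{f}$. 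This is the step I expect to require the most care, since one has to rule out intermediate elements between $\mathbf{f}$ and $2\mathbf{f}$ in the $\leq_\cC$-order and argue that a non-special pseudo-Frobenius gap is always dominated, in the required structured way, by a special one.

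For $(ii)\Rightarrow(iii)$: let $\mathbf{h}\in\cH(S)$. If $\mathbf{h}\in\operatorname{PF}(S)$, apply $(ii)$ directly: either $\mathbf{h}\in\mathrm{Maximals}_{\leq_\cC}\cH(S)$ (take $\mathbf{F}=\mathbf{h}$, so $\mathbf{F}-\mathbf{h}=\mathbf{0}\in S$) or $2\mathbf{h}\in\mathrm{Maximals}_{\leq_\cC}\cH(S)$, which is the first alternative of $(iii)$. If $\mathbf{h}\notin\operatorname{PF}(S)$, then there is $\mathbf{s}\in S\setminus\{\mathbf{0}\}$ with $\mathbf{h}+\mathbf{s}\in\cH(S)$; iterating (finiteness of $\cH(S)$) we reach a pseudo-Frobenius element $\mathbf{h}'\in\operatorname{PF}(S)$ with $\mathbf{h}'-\mathbf{h}\in S\setminus\{\mathbf{0}\}$. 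Apply $(ii)$ to $\mathbf{h}'$: if $\mathbf{h}'\in\mathrm{Maximals}_{\leq_\cC}\cH(S)$, take $\mathbf{F}=\mathbf{h}'$; if instead $2\mathbf{h}'\in\mathrm{Maximals}_{\leq_\cC}\cH(S)$, take $\mathbf{F}=2\mathbf{h}'$ and note $\mathbf{F}-\mathbf{h}=\mathbf{h}'+(\mathbf{h}'-\mathbf{h})\in S$ since $\mathbf{h}'\in\operatorname{PF}(S)$ and $\mathbf{h}'-\mathbf{h}\in S\setminus\{\mathbf{0}\}$. So $(iii)$ holds.

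For $(iii)\Rightarrow(i)$: the inclusion $\mathrm{Maximals}_{\leq_\cC}\cH(S)\subseteq\operatorname{SG}(S)$ is Proposition~\ref{prop:containement}, so only $\operatorname{SG}(S)\subseteq\mathrm{Maximals}_{\leq_\cC}\cH(S)$ needs proof. Let $\mathbf{h}\in\operatorname{SG}(S)\subseteq\operatorname{PF}(S)$, so $2\mathbf{h}\in S$. Since $\mathbf{h}\in\cH(S)$, apply $(iii)$: the alternative $2\mathbf{h}\in\mathrm{Maximals}_{\leq_\cC}\cH(S)$ is impossible because $2\mathbf{h}\in S$, hence $2\mathbf{h}\notin\cH(S)$; therefore there exists $\mathbf{F}\in\mathrm{Maximals}_{\leq_\cC}\cH(S)$ with $\mathbf{F}-\mathbf{h}\in S$, i.e. $\mathbf{h}\leq_\cC\mathbf{F}$. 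If $\mathbf{F}-\mathbf{h}\neq\mathbf{0}$ then $\mathbf{h}\in\operatorname{PF}(S)$ forces $\mathbf{F}=\mathbf{h}+(\mathbf{F}-\mathbf{h})\in S$, contradicting $\mathbf{F}\in\cH(S)$; hence $\mathbf{F}=\mathbf{h}$, so $\mathbf{h}\in\mathrm{Maximals}_{\leq_\cC}\cH(S)$. This closes the cycle.
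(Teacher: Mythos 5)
Your implications $(ii)\Rightarrow(iii)$ and $(iii)\Rightarrow(i)$ are correct and essentially identical to the paper's argument. The problem is $(i)\Rightarrow(ii)$: what you have written there is a plan, not a proof. The entire content of that implication is concentrated in the sentence ``one deduces that the only way $\mathbf{x}$ can fail to equal $\mathbf{f}$ or $2\mathbf{f}$ leads to a contradiction with maximality of $\mathbf{x}$,'' and you yourself flag it as the step requiring the most care without supplying it. Moreover, the route you propose aims at the wrong target: choosing \emph{some} $\mathbf{x}\in \mathrm{Maximals}_{\leq_\cC}\cH(S)$ with $\mathbf{f}\leq_\cC \mathbf{x}$ and trying to force $\mathbf{x}=2\mathbf{f}$ would prove that \emph{every} maximal gap dominating $\mathbf{f}$ equals $2\mathbf{f}$, which is stronger than, and not obviously implied by, what $(ii)$ asserts (namely that $2\mathbf{f}$ is \emph{one} maximal gap). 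Nothing in your setup rules out $\mathbf{f}$ sitting below several distinct maximal gaps, and knowing $\mathbf{c}=\mathbf{x}-\mathbf{f}\in\cH(S)$ and $2\mathbf{x}\in S$ gives no visible handle on showing $\mathbf{c}=\mathbf{f}$.

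The paper's argument for $(i)\Rightarrow(ii)$ goes differently and is worth internalizing. If $2\mathbf{f}\in S$ then $\mathbf{f}\in\operatorname{SG}(S)=\mathrm{Maximals}_{\leq_\cC}\cH(S)$ by $(i)$ and we are done. If $2\mathbf{f}\notin S$, then $2\mathbf{f}\in\operatorname{PF}(S)$ automatically (write $2\mathbf{f}+\mathbf{s}=\mathbf{f}+(\mathbf{f}+\mathbf{s})$), so by $(i)$ it suffices to show $2\mathbf{f}\in\operatorname{SG}(S)$, i.e.\ $4\mathbf{f}\in S$. Assuming $4\mathbf{f}\notin S$, set $k=\max\{i\in\mathbb{N}\mid i\mathbf{f}\notin S\}$ (finite since $\cH(S)$ is finite, and $k\geq 4$ here); then $i\mathbf{f}\notin S$ for all $1\leq i\leq k$, the element $(k-1)\mathbf{f}$ is pseudo-Frobenius, and $2(k-1)\mathbf{f}\in S$ because $2(k-1)>k$, so $(k-1)\mathbf{f}\in\operatorname{SG}(S)=\mathrm{Maximals}_{\leq_\cC}\cH(S)$ — contradicting the fact that $k\mathbf{f}$ is a gap strictly above it in $\leq_\cC$. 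This chain through multiples of $\mathbf{f}$ is the idea missing from your proposal; without it, or a complete substitute, the equivalence is not established.
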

\begin{proof}
    $(1) \Rightarrow (2)$. Assume $ \mathrm{Maximals}_{\leq_\cC} \cH(S)=\operatorname{SG}(S)$ and let $\mathbf{f}\in \operatorname{PF}(S)$. If $2\mathbf{f}\in S$, then $\mathbf{f}\in \operatorname{SG}(S)$ and thus, $\mathbf{f}\in \mathrm{Maximals}_{\leq_\cC} \cH(S)$. So, assume $2\mathbf{f}\notin S$. In this case, observe that $2\mathbf{f}\in \operatorname{PF}(S)$. In fact, if $\mathbf{s}\in S\setminus \{\mathbf{0}\}$, then $2\mathbf{f}+\mathbf{s}=\mathbf{f}+(\mathbf{f}+\mathbf{s})$, and the claim holds by $\mathbf{f}\in \operatorname{PF}(S)$. Now, if $4\mathbf{f}\in S$, then $2\mathbf{f}\in \operatorname{SG}(S)=\mathrm{Maximals}_{\leq_\cC} \cH(S)$. Assume $4\mathbf{f}\notin S$. Since $S$ is a $\cC$-semigroup, we can define $k=\max\{i\in \mathbb{N}\mid i\mathbf{f}\notin S\}$ and by our assumption, $k\geq 3$. Note that if $j\mathbf{f} \notin S$ then $(j-1)\mathbf{f} \notin S$ for all $2\leq j\leq k$. Now, consider the gap $(k-1)\mathbf{f}$ and observe that it is a pseudo-Frobenius element. Moreover, $2(k-1)\mathbf{f}\in S$, since $2(k-1)\geq k+1$. Hence, $2(k-1)\mathbf{f}\in \operatorname{SG}(S)= \mathrm{Maximals}_{\leq_\cC} \cH(S)$. But also $k\mathbf{f}\in \mathcal{H}(S)$ and $(k-1)\mathbf{f}\leq_\cC k\mathbf{f}$, a contradiction. Therefore, $4\mathbf{f}\in S$.

    \noindent $(2) \Rightarrow (3)$. Let $\mathbf{h}\in \cH(S)$. If $\mathbf{h}\in \operatorname{PF}(S)$, then $2\mathbf{h}\in \mathrm{Maximals}_{\leq_\cC} \cH(S)$ or $\mathbf{h}\in \mathrm{Maximals}_{\leq_\cC} \cH(S)$. In the first case, we trivially conclude. The second case is also trivial. since we can consider that $\mathbf{h}-\mathbf{h}=\mathbf{0}\in S$. Suppose $\mathbf{h}\notin \operatorname{PF}(S)$. Then there exists $\mathbf{f}\in \operatorname{PF}(S)$ such that $\mathbf{h}\leq_S \mathbf{f}$. In particular $\mathbf{f}-\mathbf{h}\in S$. So, by hypothesis, $\mathbf{f}\in \mathrm{Maximals}_{\leq_\cC} \cH(S)$ or $2\mathbf{f}\in \mathrm{Maximals}_{\leq_\cC} \cH(S)$. In the first case, we are done, while in the second, we can observe that $2\mathbf{f}\in \operatorname{PF}(S)$ and $\mathbf{h}\leq_S 2\mathbf{f}$, that is, $2\mathbf{f}-\mathbf{h}\in S$.

    \noindent $(3)\Rightarrow(1)$ By Proposition~\ref{prop:containement} we know that $\mathrm{Maximals}_{\leq_\cC} \cH(S)\subseteq \operatorname{SG}(S)$. So, let $\mathbf{h}\in \operatorname{SG}(S)$ and assume $\mathbf{h}\notin \mathrm{Maximals}_{\leq_\cC} \cH(S)$. Since $2\mathbf{h}\in S$, by hypothesis there exists $\mathbf{F}\in  \mathrm{Maximals}_{\leq_\cC} \cH(S)$, with $\mathbf{F}\neq \mathbf{h}$, such that $\mathbf{F}-\mathbf{h}\in S$. But this means that there exists $\mathbf{s}\in S\setminus \{\mathbf{0}\}$ such that $\mathbf{h}+\mathbf{s}\in \cH(S)$. contradicting $\mathbf{h}\in \operatorname{SG}(S)$. Therefore, $\mathbf{h}\in \mathrm{Maximals}_{\leq_\cC} \cH(S)$.
\end{proof}

\begin{proposition}\label{prop:quasi-symmetric}
    Let $S$ be a $\mathcal{C}$-semigroup. Then $\mathrm{Maximals}_{\leq_\cC} \cH(S)=\operatorname{PF}(S)$ if and only if for all $\mathbf{h}\in \mathcal{H}(S)$, there exists $\mathbf{F}\in \mathrm{Maximals}_{\leq_\cC} \cH(S)$ such that $\mathbf{F}-\mathbf{h}\in S$.
\end{proposition}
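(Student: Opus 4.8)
The plan is to prove both implications directly, exploiting the characterization of pseudo-Frobenius elements as the $\leq_S$-maximal gaps together with the inclusion $\mathrm{Maximals}_{\leq_\cC}\cH(S)\subseteq \operatorname{PF}(S)$ from Proposition~\ref{prop:containement}. For the forward direction, suppose $\mathrm{Maximals}_{\leq_\cC}\cH(S)=\operatorname{PF}(S)$ and let $\mathbf{h}\in\mathcal{H}(S)$. Since $S$ is a $\cC$-semigroup, $\mathbf{h}$ lies below some pseudo-Frobenius element in the order $\leq_S$: that is, there exists $\mathbf{F}\in\operatorname{PF}(S)$ with $\mathbf{F}-\mathbf{h}\in S$ (if $\mathbf{h}$ itself is pseudo-Frobenius, take $\mathbf{F}=\mathbf{h}$ and note $\mathbf{h}-\mathbf{h}=\mathbf{0}\in S$; otherwise use that $\mathcal{H}(S)$ is finite and iterate adding elements of $S\setminus\{\mathbf{0}\}$ until a pseudo-Frobenius element is reached). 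By hypothesis $\mathbf{F}\in\operatorname{PF}(S)=\mathrm{Maximals}_{\leq_\cC}\cH(S)$, which is exactly the desired conclusion.

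For the converse, assume that for every $\mathbf{h}\in\mathcal{H}(S)$ there is some $\mathbf{F}\in\mathrm{Maximals}_{\leq_\cC}\cH(S)$ with $\mathbf{F}-\mathbf{h}\in S$. By Proposition~\ref{prop:containement} it suffices to show $\operatorname{PF}(S)\subseteq\mathrm{Maximals}_{\leq_\cC}\cH(S)$. Let $\mathbf{f}\in\operatorname{PF}(S)$ and suppose, for contradiction, that $\mathbf{f}\notin\mathrm{Maximals}_{\leq_\cC}\cH(S)$. Apply the hypothesis to $\mathbf{h}=\mathbf{f}$: there exists $\mathbf{F}\in\mathrm{Maximals}_{\leq_\cC}\cH(S)$ with $\mathbf{F}-\mathbf{f}\in S$; necessarily $\mathbf{F}\neq\mathbf{f}$ by our assumption, so $\mathbf{F}-\mathbf{f}\in S\setminus\{\mathbf{0}\}$. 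Then $\mathbf{F}=\mathbf{f}+(\mathbf{F}-\mathbf{f})$ with $\mathbf{F}-\mathbf{f}\in S\setminus\{\mathbf{0}\}$, and since $\mathbf{f}\in\operatorname{PF}(S)$ this forces $\mathbf{F}\in S$, contradicting $\mathbf{F}\in\cH(S)$. Hence $\mathbf{f}\in\mathrm{Maximals}_{\leq_\cC}\cH(S)$, giving the reverse inclusion and therefore equality.

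I expect the only delicate point to be the fact, used in the forward direction, that every gap of a $\cC$-semigroup lies $\leq_S$-below some pseudo-Frobenius element; this is where finiteness of $\cH(S)$ is essential and is a standard property of $\cC$-semigroups (analogous to the numerical semigroup case), so it can be invoked cleanly rather than reproved. Everything else is a short formal manipulation, and in particular the converse is essentially a one-line argument once the inclusion $\mathrm{Maximals}_{\leq_\cC}\cH(S)\subseteq\operatorname{PF}(S)$ is in hand.
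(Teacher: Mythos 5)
Your proof is correct and follows essentially the same route as the paper: the forward direction uses that every gap lies $\leq_S$-below a pseudo-Frobenius element, and the converse uses the inclusion from Proposition~\ref{prop:containement} plus the observation that a pseudo-Frobenius element $\mathbf{f}$ with $\mathbf{F}-\mathbf{f}\in S\setminus\{\mathbf{0}\}$ would force $\mathbf{F}\in S$. Your contradiction phrasing in the converse is just a rewording of the paper's direct argument that $\mathbf{F}=\mathbf{f}$.
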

\begin{proof}
  $(\Rightarrow)$
  If $\mathbf{h}\in \cH(S)$, then there exists $\mathbf{f}\in \operatorname{PF}(S)$ such that $\mathbf{h}\leq_S \mathbf{f}$, that is, $\mathbf{f}-\mathbf{h}\in S$ . Since, by hypothesis, $\mathbf{f}\in \mathrm{Maximals}_{\leq_\cC} \cH(S) $, and we are done.

  \noindent $(\Leftarrow)$ By Proposition~\ref{prop:containement} we know that $\mathrm{Maximals}_{\leq_\cC} \cH(S)\subseteq \operatorname{PF}(S)$. So, let $\mathbf{h}\in \operatorname{PF}(S)$. By hypothesis, there exists $\mathbf{F}\in \mathrm{Maximals}_{\leq_\cC} \cH(S) $ such that $\mathbf{F}-\mathbf{h}\in S$. This means $\mathbf{h}\leq_S \mathbf{F}$ and since $\mathbf{h}\in \operatorname{PF}(S)$ we have $\mathbf{F}=\mathbf{h}$.
\end{proof}

Taking into account the previous results, we say that a $\cC$-semigroup is \emph{quasi-irreducible} if it satisfies one of the equivalent conditions of Theorem~\ref{thm:equivalent-quasiIrred}, while we say it is \emph{quasi-symmetric} if it satisfies one of the equivalent conditions of Proposition~\ref{prop:quasi-symmetric}. Observe that these definitions are extensions of the same notions provided in \cite{singhal} for generalized numerical semigroups.

\begin{corollary}
    Let $S$ be a quasi-irreducible $\cC$-semigroup and denote $\tau(S)=|\mathrm{Maximals}_{\leq_\cC}\cH(S)|$.   Then $\tau(S) \leq \operatorname{t}(S) \leq 2\tau(S)$.
\end{corollary}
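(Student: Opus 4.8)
I want to bound $\operatorname{t}(S)$ both from above and below in terms of $\tau(S) = |\mathrm{Maximals}_{\leq_\cC}\cH(S)|$, using the quasi-irreducibility hypothesis, which by Theorem~\ref{thm:equivalent-quasiIrred} I may use in any of its three equivalent forms. Since $S$ is a $\cC$-semigroup, it is an MPD-semigroup, so by the Corollary following Theorem~\ref{thm:type_apery} we have $\operatorname{t}(S) = |\operatorname{PF}(S)|$. Thus the statement is equivalent to $\tau(S) \leq |\operatorname{PF}(S)| \leq 2\tau(S)$, and the left inequality is immediate from Proposition~\ref{prop:containement}, which gives $\mathrm{Maximals}_{\leq_\cC}\cH(S) \subseteq \operatorname{SG}(S) \subseteq \operatorname{PF}(S)$, hence $\tau(S) \leq |\operatorname{SG}(S)| \le |\operatorname{PF}(S)| = \operatorname{t}(S)$.

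**The upper bound.** For $\operatorname{t}(S) = |\operatorname{PF}(S)| \leq 2\tau(S)$, I would exhibit a map from $\operatorname{PF}(S)$ to $\mathrm{Maximals}_{\leq_\cC}\cH(S)$ that is at most $2$-to-$1$. The natural candidate uses condition (ii) of Theorem~\ref{thm:equivalent-quasiIrred}: for each $\mathbf{f}\in\operatorname{PF}(S)$, either $\mathbf{f}\in\mathrm{Maximals}_{\leq_\cC}\cH(S)$, or $2\mathbf{f}\in\mathrm{Maximals}_{\leq_\cC}\cH(S)$ (and then one checks, as in the proof of (1)$\Rightarrow$(2), that $2\mathbf{f}\in\operatorname{PF}(S)$ too). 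Define $\varphi\colon\operatorname{PF}(S)\to\mathrm{Maximals}_{\leq_\cC}\cH(S)$ by $\varphi(\mathbf{f})=\mathbf{f}$ in the first case and $\varphi(\mathbf{f})=2\mathbf{f}$ in the second. I then need to bound the fibers: given $\mathbf{F}\in\mathrm{Maximals}_{\leq_\cC}\cH(S)$, how many $\mathbf{f}\in\operatorname{PF}(S)$ have $\varphi(\mathbf{f})=\mathbf{F}$? Such an $\mathbf{f}$ is either $\mathbf{F}$ itself, or satisfies $2\mathbf{f}=\mathbf{F}$. The second equation has at most one solution in $\mathbb{N}^d$ (namely $\mathbf{F}/2$, if it has integer coordinates), so the fiber over $\mathbf{F}$ has at most two elements. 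Hence $|\operatorname{PF}(S)| \le 2|\mathrm{Maximals}_{\leq_\cC}\cH(S)| = 2\tau(S)$, which is the desired inequality.

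**Where the care is needed.** The one point that must be checked carefully is that $\varphi$ is well-defined as a map into $\mathrm{Maximals}_{\leq_\cC}\cH(S)$ — i.e.\ that when $\mathbf{f}\notin\mathrm{Maximals}_{\leq_\cC}\cH(S)$ we genuinely land in that set via $2\mathbf{f}$, and that the "$2$-to-$1$" fiber count is correct even in the degenerate case where $\mathbf{F}$ is simultaneously of the form $\mathbf{f}$ for one pseudo-Frobenius $\mathbf{f}$ and $2\mathbf{f}'$ for another — but that is exactly the two elements the bound allows. There is no subtlety beyond linear arithmetic in $\mathbb{N}^d$: the map $\mathbf{f}\mapsto 2\mathbf{f}$ is injective, so distinct pseudo-Frobenius elements sent by $\varphi$ to the same $\mathbf{F}$ via the doubling branch must coincide. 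So the only mild obstacle is bookkeeping the cases of $\varphi$; the structural input (condition (ii), and $|\operatorname{PF}(S)|=\operatorname{t}(S)$) is already available. I would write this up in half a page.
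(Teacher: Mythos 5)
Your argument is correct and is precisely the intended one: the paper states this corollary without proof, but the evident justification is exactly your combination of $\operatorname{t}(S)=|\operatorname{PF}(S)|$, the inclusion $\mathrm{Maximals}_{\leq_\cC}\cH(S)\subseteq\operatorname{PF}(S)$ from Proposition~\ref{prop:containement}, and the at-most-$2$-to-$1$ map $\mathbf{f}\mapsto\mathbf{f}$ or $2\mathbf{f}$ furnished by condition (ii) of Theorem~\ref{thm:equivalent-quasiIrred}. Your fiber-counting is sound (injectivity of doubling in $\mathbb{N}^d$ gives at most one preimage on each branch), so nothing is missing.
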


\section{Reduced type for $\mathcal{C}$-semigroups}

If $S$ is a numerical semigroup, we know that $K[S]$ is a Cohen-Macaulay one dimensional domain, with $\operatorname{t}(S)=|\mathrm{Maximals}_{\leq_S}\mathrm{Ap}(S,\operatorname{m}(S))|=|\operatorname{PF}(S)|$ and $\mathfrak{C}_S=\{n\in \mathbb{N}\mid n>\operatorname{F}(S)\}$. In this case, one can study the so-called \emph{reduced type}, introduced by \cite{huneke2021torsion} to study Berger's conjecture. This invariant is also studied in \cite{maitra2023extremal}. The reduced type is defined as $\mathrm{s}(R)=\dim_K\left(\frac{\mathfrak{C}_R+\mathfrak{q} R}{\mathfrak{q} R}\right)$ where $\mathfrak{q}$ is a minimal reduction of $R$. In particular, in this case, one can consider $\mathfrak{q}=(X^{\operatorname{m}(S)})$. In terms of the numerical semigroup $S$, it can be translated as $\mathrm{s}(R)=|\mathfrak{C}_S \cap \mathrm{Ap}(S,\operatorname{m}(S))|=|\cH(S) \cap [\operatorname{F}(S)-\operatorname{m}(S)+1,\operatorname{F}(S)]|$. In this case, we also have $\mathfrak{C}_S \cap \mathrm{Ap}(S,\operatorname{m}(S))\subseteq \mathrm{Maximals}_{\leq_S}\mathrm{Ap}(S,\operatorname{m}(S))$. So, it is not difficult to see that $1\leq \mathrm{s}(R)\leq \operatorname{t}(S)$.
From the point of view of invariants of an affine semigroup, the reduced type gives us the idea to investigate the relation between the conductor of an affine semigroup $S\subseteq \mathbb{N}^d$ and the Ap\'ery set $\mathrm{Ap}(S, Q)$, where $\operatorname{t}(S)=|\mathrm{Maximals}_{\leq_S}\mathrm{Ap}(S, Q)|$. The following example highlights a particular difference between numerical semigroups and affine semigroups in $\mathbb{N}^d$, with $d>1$, for what concerns the relationship of the Ap\'ery set and conductor.

\medskip
\begin{example} \rm
   \noindent  Let $S=\langle (0,4),(2,0),(1,1),(1,2)\rangle\subseteq \mathbb{N}^2$. $S$ is a simplicial Cohen-Macaulay affine semigroup with $E=\{(0,4),(2,0)\}$ and $\overline{S}=\mathbb{N}^2$. It is possible to compute $\operatorname{Ap}(S,E)=\{(0,0),(1,2),(1,1),(2,2),(3,3),(2,3),(3,4),(4,5)\}$. Moreover, $(3,2)+\mathbb{N}^2\subseteq \mathfrak{C}_S$. Hence, we have $(3,4)\in \mathfrak{C}_S \cap \mathrm{Ap}(S,E)$ but $(3,4)\notin \mathrm{Maximals}_{\leq_S}\mathrm{Ap}(S,E)$, since $(3,4)\leq_S (4,5)$.
\end{example}


So, it can occur that $\mathfrak{C}_S \cap \mathrm{Ap}(S,Q)\nsubseteq \mathrm{Maximals}_{\leq_S}\mathrm{Ap}(S,Q)$. This means that the cardinality of $\mathfrak{C}_S \cap \mathrm{Ap}(S,Q)$ can not be straightforwardly interpreted as a lower bound of $\operatorname{t}(S)$. For this reason, we suggest to investigate the properties of the following value:
$$\operatorname{s}(S,Q)=|\mathfrak{C}_S \cap \mathrm{Maximals}_{\leq_S}\mathrm{Ap}(S,Q)|.$$

In this way, we have trivially $\operatorname{s}(S,Q)\leq \operatorname{t}(S)$. Observe also that, if $S$ is a numerical semigroup, then $Q=\{\operatorname{m}(S)\}$ and $\mathfrak{C}_S \cap \mathrm{Maximals}_{\leq_S}\mathrm{Ap}(S,\operatorname{m}(S))=\mathfrak{C}_S \cap \mathrm{Ap}(S,\operatorname{m}(S))$. So, we have that $\operatorname{s}(S,\operatorname{m}(S))$ is the reduced type of $S$ as defined in \cite{maitra2023extremal}. Another difference with respect to numerical semigroups, in the case $S$ is an affine semigroup in $\mathbb{N}^d$ with $d>1$, is the possibility that $\operatorname{s}(S,Q)=0$, as we show in the following example:

\begin{example} \rm
    Let $S\subseteq \mathbb{N}^2$ be the affine semigroup generated by the set:
    $$ A=\{(3,0),(0,4), (1,6),(2,6),(2,8),(2,9),(3,6),(3,7),(4,3),(4,4),(4,5),(5,3),(6,5)\}.$$
\noindent    $S$ is a Cohen-Macaulay simplicial affine semigroup with $E=\{(0,4),(3,0)\}$ and $\overline{S}=\mathbb{N}^2$. Moreover $\mathfrak{C}_S=(4,6)+\mathbb{N}^2$. In fact one can see that $(4,6)+\mathbb{N}^2\subseteq S$ since $[(4,6),(7,10)]\subseteq S$. Furthermore, $\{(5+3n,5)\in \mathbb{N}^2\mid n\geq 0\}\cup\{(3,9+4n)\in \mathbb{N}^2 \mid n\geq 0\}\subseteq \cH(S)$. From this, one can argue that if $\mathbf{s}\notin (4,6)+\mathbb{N}^2$, then there exits $\mathbf{n} \in \mathbb{N}^2$ such that $\mathbf{s}+\mathbf{n}\notin S$. It is possible also to compute that $\operatorname{Ap}(S,E)=A\cup \{(0,0)\}$. 
 Therefore, $\mathfrak{C}_S \cap \mathrm{Maximals}_{\leq_S}\mathrm{Ap}(S,E)=\emptyset$.
\end{example}

Now, we investigate some properties of the value $\operatorname{s}(S,Q)$ defined above, in the case of $\mathcal{C}$-semigroups. In particular, we study its extremal behavior, extending some results contained in \cite{maitra2023extremal}. Let $S$ be a $\mathcal{C}$-semigroup. We can write $S=\langle \mathbf{n}_1,\ldots,\mathbf{n}_e, \mathbf{n}_{e+1},\ldots,\mathbf{n}_{e+r}\rangle \subseteq \mathbb{N}^d$ where $E=\{\mathbf{n}_1,\ldots,\mathbf{n}_e\}$ is the set of minimal extremal rays of $\mathcal{C}$.
It is known that in this case $\mathrm{depth}(K[S])=1$ (see \cite{garcia-pseudofrobenius}). Therefore, $X^{\mathbf{n_i}}$ is a maximal regular sequence in $K[S]$. So, for each $\mathbf{n}_i \in E$, we consider
$$\operatorname{s}(S,\mathbf{n}_i)=|\mathfrak{C}_S \cap \mathrm{Maximals}_{\leq_S}\mathrm{Ap}(S,\mathbf{n}_i)|,$$ 
and we call it the \emph{reduced type} of $S$ with respect to $\mathbf{n}_i$. We start to provide some useful properties to study the reduced type in this special context.

\begin{proposition}\label{prop:conductor}
    Let $S$ be a $\mathcal{C}$-semigroup. Then $$\mathfrak{C}_S = \{\mathbf{x} \in S \mid \mathbf{x} \nleq_{\mathcal{C}} \mathbf{F} \text{ for all } \mathbf{F} \in \mathrm{Maximals}_{\leq_{\mathcal{C}}}\mathcal{H}(S)\}.$$
\end{proposition}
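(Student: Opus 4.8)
The plan is to unwind the definition of the conductor and translate everything into the language of the partial order $\leq_{\mathcal{C}}$. Since $S$ is a $\mathcal{C}$-semigroup we have $\overline{S}=\mathcal{C}$, so $\mathfrak{C}_S=\{\mathbf{x}\in S\mid \mathbf{x}+\mathcal{C}\subseteq S\}$; and by definition $\mathbf{x}\leq_{\mathcal{C}}\mathbf{F}$ means exactly $\mathbf{F}-\mathbf{x}\in\mathcal{C}$. So the claim is equivalent to showing that, for $\mathbf{x}\in S$, the condition $\mathbf{x}+\mathcal{C}\subseteq S$ holds if and only if no element of $\mathrm{Maximals}_{\leq_{\mathcal{C}}}\mathcal{H}(S)$ lies above $\mathbf{x}$ in the order $\leq_{\mathcal{C}}$. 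I would first record two elementary facts: $\leq_{\mathcal{C}}$ is a genuine partial order on $\mathbb{N}^d$ (antisymmetry holds because $\mathcal{C}\cap(-\mathcal{C})=\{\mathbf{0}\}$, the cone being pointed), and, since $\mathcal{H}(S)$ is finite, every gap of $S$ is $\leq_{\mathcal{C}}$ some element of $\mathrm{Maximals}_{\leq_{\mathcal{C}}}\mathcal{H}(S)$.

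For the inclusion $\subseteq$, take $\mathbf{x}\in\mathfrak{C}_S$ and suppose, for contradiction, that $\mathbf{x}\leq_{\mathcal{C}}\mathbf{F}$ for some $\mathbf{F}\in\mathrm{Maximals}_{\leq_{\mathcal{C}}}\mathcal{H}(S)$. Then $\mathbf{F}=\mathbf{x}+(\mathbf{F}-\mathbf{x})\in\mathbf{x}+\mathcal{C}\subseteq S$, contradicting $\mathbf{F}\in\mathcal{H}(S)$. Hence $\mathbf{x}\nleq_{\mathcal{C}}\mathbf{F}$ for every such $\mathbf{F}$.

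For the inclusion $\supseteq$, let $\mathbf{x}\in S$ with $\mathbf{x}\nleq_{\mathcal{C}}\mathbf{F}$ for all $\mathbf{F}\in\mathrm{Maximals}_{\leq_{\mathcal{C}}}\mathcal{H}(S)$, and suppose $\mathbf{x}\notin\mathfrak{C}_S$, i.e.\ there is $\mathbf{c}\in\mathcal{C}$ with $\mathbf{x}+\mathbf{c}\notin S$. Because $\mathbf{x}\in S\subseteq\mathrm{cone}(S)$ and $\mathbf{c}\in\mathrm{cone}(S)$, the sum lies in $\mathrm{cone}(S)\cap\mathbb{N}^d$, so $\mathbf{x}+\mathbf{c}\in\mathcal{H}(S)$. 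Choose $\mathbf{F}\in\mathrm{Maximals}_{\leq_{\mathcal{C}}}\mathcal{H}(S)$ with $\mathbf{x}+\mathbf{c}\leq_{\mathcal{C}}\mathbf{F}$; then $\mathbf{F}-\mathbf{x}=(\mathbf{F}-\mathbf{x}-\mathbf{c})+\mathbf{c}$ is a sum of two elements of $\mathcal{C}$, hence lies in $\mathcal{C}$, giving $\mathbf{x}\leq_{\mathcal{C}}\mathbf{F}$, a contradiction. Therefore $\mathbf{x}+\mathcal{C}\subseteq S$, i.e.\ $\mathbf{x}\in\mathfrak{C}_S$.

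There is no serious obstacle here; the argument is a short direct computation. The only points that require a little care are the closure properties of the cone (that $S\subseteq\mathrm{cone}(S)$ and $\mathrm{cone}(S)+\mathrm{cone}(S)\subseteq\mathrm{cone}(S)$, which keep $\mathbf{x}+\mathbf{c}$ inside $\mathcal{C}$ and allow us to add $\mathbf{c}$ back in the last step) and the finiteness of $\mathcal{H}(S)$, which guarantees the existence of a maximal gap above any prescribed gap.
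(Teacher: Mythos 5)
Your proof is correct and follows essentially the same route as the paper's: both directions reduce to the observation that $\overline{S}=\mathcal{C}$, with the forward inclusion by writing $\mathbf{F}=\mathbf{x}+(\mathbf{F}-\mathbf{x})$ and the reverse by locating $\mathbf{x}+\mathbf{c}$ below some maximal gap and absorbing $\mathbf{c}$ back into the cone. The extra remarks you record (pointedness of the cone and finiteness of $\mathcal{H}(S)$ guaranteeing maximal gaps above any gap) are correct and left implicit in the paper.
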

\begin{proof}
    Let $\mathbf{x} \in S$ such that $\mathbf{x}\nleq_{\mathcal{C}} \mathbf{F}$ for all $\mathbf{F} \in \mathrm{Maximals}_{\leq_{\mathcal{C}}}\mathcal{H}(S)$. We need to prove $\mathbf{x} + \mathbf{c} \in S$ for all $\mathbf{c} \in \mathcal{C}$. Suppose $\mathbf{x} + \mathbf{c} \notin S$ for some $\mathbf{c} \in \mathcal{C}$. This implies $\mathbf{x} + \mathbf{c} \in \mathcal{H}(S)$. Therefore, there exists some $\mathbf{F} \in \mathrm{Maximals}_{\leq_{\mathcal{C}}}\mathcal{H}(S)$ such that $\mathbf{x} + \mathbf{c}\leq_{\mathcal{C}} \mathbf{F}$. This implies $\mathbf{x} + \mathbf{c}+\mathbf{d} = \mathbf{F}$ for some $\mathbf{d} \in \mathcal{C}$. Since $\mathbf{c}+\mathbf{d} \in \mathcal{C}$, we get $\mathbf{x} \leq_{\mathcal{C}} \mathbf{F}$, a contradiction. Hence, $\mathbf{x} \in \mathfrak{C}_S.$ Conversely, suppose $\mathbf{x} \in \mathfrak{C}_S$, then $\mathbf{x} \in S$, thus we need to prove $\mathbf{x}\nleq_{\mathcal{C}} \mathbf{F}$ for all $\mathbf{F} \in \mathrm{Maximals}_{\leq_{\mathcal{C}}}\mathcal{H}(S)$. If there exists some $\mathbf{F} \in \mathrm{Maximals}_{\leq_{\mathcal{C}}}\mathcal{H}(S)$ such that $\mathbf{x}\leq_{\mathcal{C}} \mathbf{F}$ then $\mathbf{F} = \mathbf{x} + \mathbf{c} $ for some $\mathbf{c} \in \mathcal{C}$. This implies $\mathbf{x} + \mathbf{c} \notin S$. This is a contradiction.
\end{proof}

\medskip

\begin{lemma}\label{s(S,n_i)non_empty}
Let $S$ be a $\mathcal{C}$-semigroup. If $\mathbf{h}$ is a maximal gap with respect to the order $\leq_\mathcal{C}$, then $\mathbf{h}+\mathbf{n}_i\in \mathfrak{C}_S \cap \mathrm{Maximals}_{\leq_S} \mathrm{Ap}(S,\mathbf{n}_i)$ for all $i\in\{1,\ldots,e\}$. In particular, $\operatorname{s}(S,\mathbf{n}_i)\geq 1$ for all $i\in \{1,\ldots,e\}$.
\end{lemma}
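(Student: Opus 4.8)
The plan is to fix a maximal gap $\mathbf{h} \in \mathrm{Maximals}_{\leq_{\mathcal{C}}}\mathcal{H}(S)$ and an index $i \in \{1,\ldots,e\}$, and to verify separately the three assertions: (a) $\mathbf{h}+\mathbf{n}_i \in S$; (b) $\mathbf{h}+\mathbf{n}_i \in \mathrm{Ap}(S,\mathbf{n}_i)$; (c) $\mathbf{h}+\mathbf{n}_i \in \mathrm{Maximals}_{\leq_S}\mathrm{Ap}(S,\mathbf{n}_i)$; and (d) $\mathbf{h}+\mathbf{n}_i \in \mathfrak{C}_S$. The final ``in particular'' statement is then immediate, since $\mathbf{h}+\mathbf{n}_i$ witnesses a nonempty intersection.

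For (a): since $\mathbf{h}$ is a maximal gap with respect to $\leq_{\mathcal{C}}$ and $\mathbf{n}_i \in S \setminus \{\mathbf{0}\} \subseteq \mathcal{C}$ with $\mathbf{h} \leq_{\mathcal{C}} \mathbf{h}+\mathbf{n}_i$, the maximality of $\mathbf{h}$ forces $\mathbf{h}+\mathbf{n}_i \notin \mathcal{H}(S)$, and since $\mathbf{h}+\mathbf{n}_i \in \mathcal{C}$ we conclude $\mathbf{h}+\mathbf{n}_i \in S$ (this is exactly the kind of argument already used in Proposition~\ref{prop:containement}). For (b): if $\mathbf{h}+\mathbf{n}_i \notin \mathrm{Ap}(S,\mathbf{n}_i)$, then $\mathbf{h}+\mathbf{n}_i - \mathbf{n}_i = \mathbf{h} \in S$, contradicting $\mathbf{h} \in \mathcal{H}(S)$; so $\mathbf{h}+\mathbf{n}_i \in \mathrm{Ap}(S,\mathbf{n}_i)$. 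For (d): I would apply Proposition~\ref{prop:conductor}. We must show $\mathbf{h}+\mathbf{n}_i \nleq_{\mathcal{C}} \mathbf{F}$ for every $\mathbf{F} \in \mathrm{Maximals}_{\leq_{\mathcal{C}}}\mathcal{H}(S)$. Suppose $\mathbf{h}+\mathbf{n}_i \leq_{\mathcal{C}} \mathbf{F}$; then $\mathbf{h} \leq_{\mathcal{C}} \mathbf{h}+\mathbf{n}_i \leq_{\mathcal{C}} \mathbf{F}$, and since $\mathbf{h}$ is itself a maximal element of $\mathcal{H}(S)$ with respect to $\leq_{\mathcal{C}}$ and $\mathbf{F} \in \mathcal{H}(S)$, we get $\mathbf{h} = \mathbf{F}$, whence $\mathbf{n}_i = \mathbf{0}$, a contradiction. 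Thus $\mathbf{h}+\mathbf{n}_i \in \mathfrak{C}_S$.

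Finally, (c): I claim $\mathbf{h}+\mathbf{n}_i$ is maximal in $\mathrm{Ap}(S,\mathbf{n}_i)$ with respect to $\leq_S$. Indeed, since $\mathbf{h}+\mathbf{n}_i \in \mathfrak{C}_S$ by (d), for any $\mathbf{s} \in S \setminus \{\mathbf{0}\}$ we have $\mathbf{h}+\mathbf{n}_i + \mathbf{s} \in \mathfrak{C}_S \subseteq S$, and moreover $(\mathbf{h}+\mathbf{n}_i+\mathbf{s}) - \mathbf{n}_i = \mathbf{h}+\mathbf{s} \in S$ (as $\mathbf{h}+\mathbf{s} \in S$, being obtained from the conductor element $\mathbf{h}+\mathbf{n}_i$ by adding $\mathbf{s}$ and subtracting... more carefully: $\mathbf{h}+\mathbf{n}_i \in \mathfrak{C}_S$ gives $\mathbf{h}+\mathbf{n}_i+\mathbf{s}-\mathbf{n}_i = \mathbf{h}+\mathbf{s}$; but $\mathbf{h}+\mathbf{s} \in S$ follows directly from $\mathbf{h} \in \mathrm{Maximals}_{\leq_{\mathcal{C}}}\mathcal{H}(S)$ and the argument of (a) applied to $\mathbf{s}$ in place of $\mathbf{n}_i$, since $\mathbf{h} \leq_{\mathcal{C}} \mathbf{h}+\mathbf{s}$). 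Hence $\mathbf{h}+\mathbf{n}_i+\mathbf{s} \notin \mathrm{Ap}(S,\mathbf{n}_i)$ for every $\mathbf{s} \in S\setminus\{\mathbf{0}\}$, which is precisely the statement that $\mathbf{h}+\mathbf{n}_i \in \mathrm{Maximals}_{\leq_S}\mathrm{Ap}(S,\mathbf{n}_i)$. Combining (c) and (d) gives $\mathbf{h}+\mathbf{n}_i \in \mathfrak{C}_S \cap \mathrm{Maximals}_{\leq_S}\mathrm{Ap}(S,\mathbf{n}_i)$, so this set is nonempty and $\operatorname{s}(S,\mathbf{n}_i) \geq 1$. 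I do not anticipate a serious obstacle here; the only point requiring care is keeping track of which maximality order ($\leq_{\mathcal{C}}$ versus $\leq_S$) is in play at each step, and making sure that ``$\mathbf{h}$ maximal in $\mathcal{H}(S)$ w.r.t. $\leq_{\mathcal{C}}$'' is genuinely used (it is, crucially, in both (a) and (d)).
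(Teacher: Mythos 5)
Your proposal is correct and follows essentially the same route as the paper: both arguments rest entirely on the maximality of $\mathbf{h}$ with respect to $\leq_{\mathcal{C}}$, first to place $\mathbf{h}+\mathbf{n}_i$ in $\mathrm{Maximals}_{\leq_S}\mathrm{Ap}(S,\mathbf{n}_i)$ and then to place it in $\mathfrak{C}_S$. The only cosmetic difference is in bookkeeping: the paper gets the Ap\'ery maximality by citing Proposition~\ref{prop:aperyPF} (via $\mathrm{Maximals}_{\leq_{\mathcal{C}}}\mathcal{H}(S)\subseteq \operatorname{PF}(S)$) and checks conductor membership directly, whereas you verify the Ap\'ery maximality by hand and get conductor membership from Proposition~\ref{prop:conductor}; both versions are sound.
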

\begin{proof}
 Since $\mathrm{Maximals}_{\leq_{\mathcal{C}}}\mathcal{H}(S) \subseteq \mathrm{Maximals}_{\leq_S}\mathcal{H}(S)$, we get $\mathbf{h} + \mathbf{n}_i \in  \mathrm{Maximals}_{\leq_S} \mathrm{Ap}(S,\mathbf{n}_i).$  Also, since $\mathbf{h}+\mathbf{n}_i+\mathbf{c} \in \mathcal{C}$ for all $c \in \mathcal{C}$ and $h \in \mathrm{max}_{\leq_{\mathcal{C}}}\mathcal{H}(S)$, we get $\mathbf{h} + \mathbf{n}_i + \mathbf{c} \in S$ for all $\mathbf{c} \in \mathcal{C}$. Thus, we get $\mathbf{h}+\mathbf{n}_i\in \mathfrak{C}_S \cap \mathrm{Maximals}_{\leq_S} \mathrm{Ap}(S,\mathbf{n}_i)$.
\end{proof}
\medskip
We say that $S$ has \emph{minimal reduced type} if $\operatorname{s}(S,\mathbf{n}_i)=1$ for all $i\in \{1,\ldots, e\}$. We say that $S$ has \emph{maximal reduced type} if $s(S,\mathbf{n}_i)=\operatorname{t}(S)$ for all $i\in \{1,\ldots,e\}$.

\begin{proposition}\label{prop:maximal-redtype}
    Let $S$ be a $\mathcal{C}$-semigroup. Then $S$ has maximal reduced type if and only if for all $\mathbf{f}\in \operatorname{PF}(S)$ and for all $i\in \{1,\ldots,e\}$ we have $\mathbf{f}+\mathbf{n}_i\nleq_{\mathcal{C}} \mathbf{F}$ for all $\mathbf{F} \in \mathrm{Maximals}_{\leq_\cC} \cH(S)$.
\end{proposition}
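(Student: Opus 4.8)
The plan is to derive the equivalence directly from Proposition~\ref{prop:aperyPF} (the description of the maximals of an Ap\'ery set via pseudo-Frobenius elements) and Proposition~\ref{prop:conductor} (the intrinsic description of the conductor), so the argument is essentially bookkeeping once those are in hand. First I would record that, since a $\mathcal{C}$-semigroup is an MPD-semigroup and each $X^{\mathbf{n}_i}$ with $\mathbf{n}_i\in E$ is a maximal regular sequence in $K[S]$, Proposition~\ref{prop:aperyPF} gives, for every $i\in\{1,\dots,e\}$,
$$\mathrm{Maximals}_{\leq_S}\mathrm{Ap}(S,\mathbf{n}_i)=\{\mathbf{f}+\mathbf{n}_i\mid \mathbf{f}\in\operatorname{PF}(S)\},$$
and the translation map $\mathbf{f}\mapsto\mathbf{f}+\mathbf{n}_i$ is a bijection from $\operatorname{PF}(S)$ onto this set. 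Combined with Theorem~\ref{thm:type_apery} this yields $\operatorname{t}(S)=|\operatorname{PF}(S)|$ and
$$\operatorname{s}(S,\mathbf{n}_i)=\bigl|\{\mathbf{f}\in\operatorname{PF}(S)\mid \mathbf{f}+\mathbf{n}_i\in\mathfrak{C}_S\}\bigr|.$$

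Next I would observe that $\mathbf{f}+\mathbf{n}_i\in S$ always holds, because $\mathbf{f}\in\operatorname{PF}(S)$ and $\mathbf{n}_i\in S\setminus\{\mathbf{0}\}$. Hence Proposition~\ref{prop:conductor} applies to the element $\mathbf{f}+\mathbf{n}_i$ and says that $\mathbf{f}+\mathbf{n}_i\in\mathfrak{C}_S$ if and only if $\mathbf{f}+\mathbf{n}_i\nleq_{\mathcal{C}}\mathbf{F}$ for all $\mathbf{F}\in\mathrm{Maximals}_{\leq_{\mathcal{C}}}\mathcal{H}(S)$. Putting the two displays together, $\operatorname{s}(S,\mathbf{n}_i)=\operatorname{t}(S)$ holds precisely when $\mathbf{f}+\mathbf{n}_i\nleq_{\mathcal{C}}\mathbf{F}$ for every $\mathbf{f}\in\operatorname{PF}(S)$ and every $\mathbf{F}\in\mathrm{Maximals}_{\leq_{\mathcal{C}}}\mathcal{H}(S)$. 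Since $S$ has maximal reduced type exactly when $\operatorname{s}(S,\mathbf{n}_i)=\operatorname{t}(S)$ for all $i\in\{1,\dots,e\}$, quantifying the previous equivalence over $i$ gives the claimed statement.

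I do not expect a genuine obstacle: all the needed structural facts are already established earlier in the paper, and the argument is a translation of a cardinality identity into a pointwise membership condition. The only point demanding a little care is the bijection $\mathbf{f}\mapsto\mathbf{f}+\mathbf{n}_i$ between $\operatorname{PF}(S)$ and $\mathrm{Maximals}_{\leq_S}\mathrm{Ap}(S,\mathbf{n}_i)$; one must note it is well defined and injective (trivially, being a translation) so that $\operatorname{s}(S,\mathbf{n}_i)\le\operatorname{t}(S)$ becomes an equality if and only if the subset $\mathfrak{C}_S\cap\mathrm{Maximals}_{\leq_S}\mathrm{Ap}(S,\mathbf{n}_i)$ is everything, i.e. every $\mathbf{f}+\mathbf{n}_i$ lies in $\mathfrak{C}_S$, rather than merely a coincidence of sizes.
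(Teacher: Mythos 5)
Your proof is correct and follows essentially the same route as the paper: both arguments combine Proposition~\ref{prop:aperyPF} (to identify $\mathrm{Maximals}_{\leq_S}\mathrm{Ap}(S,\mathbf{n}_i)$ with $\operatorname{PF}(S)+\mathbf{n}_i$) with Proposition~\ref{prop:conductor} (to translate membership of $\mathbf{f}+\mathbf{n}_i$ in $\mathfrak{C}_S$ into the non-comparability condition). Your version is merely more explicit about the bijection and about checking $\mathbf{f}+\mathbf{n}_i\in S$, which the paper leaves implicit.
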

\begin{proof}
    By Proposition~\ref{prop:conductor}, we know that $\mathbf{f}+\mathbf{n}_i\in \mathfrak{C}_S$ if and only if $\mathbf{f}+\mathbf{n}_i\nleq_{\mathcal{C}} \mathbf{F}$ for all $\mathbf{F} \in \mathrm{Maximals}_{\leq_\cC} \cH(S)$. On the other hand, we know by Proposition~\ref{prop:aperyPF} that $\mathbf{f}+\mathbf{n}_i \in \mathrm{Maximals}_{\leq_S} \mathrm{Ap}(S,\mathbf{n}_i)$ for all $i\in\{1,\ldots,e\}.$ This completes the proof. 
\end{proof}

By Lemma~\ref{s(S,n_i)non_empty}, if $S$ has a minimal reduced type, then there exists only one maximal element in the set of gaps with respect to natural partial order $\leq_{\mathcal{C}}$. 

\medskip
By Proposition~\ref{prop:C-symmeric}, it is easy to see that all symmetric $\mathcal{C}$-semigroups are of minimal reduced type. As we will show in Example~\ref{exa: minimalRed-notSymmetric}, the converse is not true.

\begin{proposition}\label{equivalent_minreltype}
    Let $S$ be a $\mathcal{C}$-semigroup. Then $S$ has minimal reduced type if and only if there exists $\mathbf{F}_\mathcal{C}(S)$ and for all $\mathbf{f}\in \operatorname{PF}(S)\setminus \{\mathbf{F}_\mathcal{C}(S)\}$ and for all $i\in \{1,\ldots,e\}$ we have $\mathbf{f}\leq_{\mathcal{C}} \mathbf{F}_\mathcal{C}(S)-\mathbf{n}_i$.
\end{proposition}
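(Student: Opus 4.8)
The plan is to deduce everything from three earlier facts: Proposition~\ref{prop:aperyPF} (the $\leq_S$-maximal elements of $\mathrm{Ap}(S,\mathbf{n}_i)$ are exactly $\{\mathbf{h}+\mathbf{n}_i\mid \mathbf{h}\in\operatorname{PF}(S)\}$), Proposition~\ref{prop:conductor} (the description of $\mathfrak{C}_S$ in terms of the $\leq_{\mathcal{C}}$-maximal gaps), and Lemma~\ref{s(S,n_i)non_empty}. First I would record the basic reformulation: the translation $\mathbf{h}\mapsto\mathbf{h}+\mathbf{n}_i$ is injective on $\operatorname{PF}(S)$, and each $\mathbf{h}+\mathbf{n}_i$ lies in $S$ (because $\mathbf{n}_i\in S\setminus\{\mathbf{0}\}$), so combining Propositions~\ref{prop:aperyPF} and \ref{prop:conductor} gives
$$\operatorname{s}(S,\mathbf{n}_i)=\big|\{\mathbf{h}\in\operatorname{PF}(S)\mid \mathbf{h}+\mathbf{n}_i\nleq_{\mathcal{C}}\mathbf{F}\ \text{ for all }\mathbf{F}\in\mathrm{Maximals}_{\leq_{\mathcal{C}}}\mathcal{H}(S)\}\big|.$$
I would read the inequality in the statement, ``$\mathbf{f}\leq_{\mathcal{C}}\mathbf{F}_{\mathcal{C}}(S)-\mathbf{n}_i$'', as ``$\mathbf{f}+\mathbf{n}_i\leq_{\mathcal{C}}\mathbf{F}_{\mathcal{C}}(S)$'', i.e.\ $\mathbf{F}_{\mathcal{C}}(S)-\mathbf{f}-\mathbf{n}_i\in\mathcal{C}$, which avoids worrying whether $\mathbf{F}_{\mathcal{C}}(S)-\mathbf{n}_i$ lies in $\mathbb{N}^d$.

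For the ``if'' direction, assume $\mathbf{F}_{\mathcal{C}}(S)$ exists, so $\mathrm{Maximals}_{\leq_{\mathcal{C}}}\mathcal{H}(S)=\{\mathbf{F}_{\mathcal{C}}(S)\}$ and $\mathbf{F}_{\mathcal{C}}(S)\in\operatorname{PF}(S)$ by Proposition~\ref{prop:containement}. By Lemma~\ref{s(S,n_i)non_empty}, $\mathbf{F}_{\mathcal{C}}(S)$ contributes to the count above for each $i$, so $\operatorname{s}(S,\mathbf{n}_i)\geq 1$. For any $\mathbf{f}\in\operatorname{PF}(S)\setminus\{\mathbf{F}_{\mathcal{C}}(S)\}$ the hypothesis gives $\mathbf{f}+\mathbf{n}_i\leq_{\mathcal{C}}\mathbf{F}_{\mathcal{C}}(S)$, so by Proposition~\ref{prop:conductor} $\mathbf{f}+\mathbf{n}_i\notin\mathfrak{C}_S$ and $\mathbf{f}$ does not contribute. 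Hence $\operatorname{s}(S,\mathbf{n}_i)=1$ for all $i$, i.e.\ $S$ has minimal reduced type.

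For the ``only if'' direction, assume $\operatorname{s}(S,\mathbf{n}_i)=1$ for all $i$. First I would produce $\mathbf{F}_{\mathcal{C}}(S)$: if $\mathbf{h}_1\neq\mathbf{h}_2$ were two distinct $\leq_{\mathcal{C}}$-maximal gaps, Lemma~\ref{s(S,n_i)non_empty} would put the two distinct elements $\mathbf{h}_1+\mathbf{n}_1$ and $\mathbf{h}_2+\mathbf{n}_1$ into $\mathfrak{C}_S\cap\mathrm{Maximals}_{\leq_S}\mathrm{Ap}(S,\mathbf{n}_1)$, contradicting $\operatorname{s}(S,\mathbf{n}_1)=1$; since $\mathcal{H}(S)$ is finite, a unique $\leq_{\mathcal{C}}$-maximal element is automatically a $\leq_{\mathcal{C}}$-maximum, so $\mathbf{F}_{\mathcal{C}}(S)$ exists. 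Then, for $\mathbf{f}\in\operatorname{PF}(S)\setminus\{\mathbf{F}_{\mathcal{C}}(S)\}$ and any $i$, the element $\mathbf{F}_{\mathcal{C}}(S)+\mathbf{n}_i$ already lies in $\mathfrak{C}_S\cap\mathrm{Maximals}_{\leq_S}\mathrm{Ap}(S,\mathbf{n}_i)$ by Lemma~\ref{s(S,n_i)non_empty}, so by $\operatorname{s}(S,\mathbf{n}_i)=1$ it is the only such element; since $\mathbf{f}+\mathbf{n}_i$ is a different element of $\mathrm{Maximals}_{\leq_S}\mathrm{Ap}(S,\mathbf{n}_i)$, it is not in $\mathfrak{C}_S$, and Proposition~\ref{prop:conductor} forces $\mathbf{f}+\mathbf{n}_i\leq_{\mathcal{C}}\mathbf{F}$ for some $\leq_{\mathcal{C}}$-maximal gap $\mathbf{F}$, necessarily $\mathbf{F}=\mathbf{F}_{\mathcal{C}}(S)$. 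This is the claimed inequality.

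I do not expect any serious obstacle: the content is essentially a bookkeeping translation of ``minimal reduced type'' through Propositions~\ref{prop:aperyPF} and \ref{prop:conductor}, together with Lemma~\ref{s(S,n_i)non_empty}. The only points needing a little care are verifying $\mathbf{f}+\mathbf{n}_i\in S$ so that Proposition~\ref{prop:conductor} is applicable, the injectivity of $\mathbf{h}\mapsto\mathbf{h}+\mathbf{n}_i$ on $\operatorname{PF}(S)$, and the passage from ``unique $\leq_{\mathcal{C}}$-maximal element'' to ``$\leq_{\mathcal{C}}$-maximum'', which is legitimate because $\mathcal{H}(S)$ is finite.
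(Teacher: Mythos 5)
Your proof is correct and follows essentially the same route as the paper's: both directions rest on Lemma~\ref{s(S,n_i)non_empty} to force uniqueness of the $\leq_{\mathcal{C}}$-maximal gap and on Proposition~\ref{prop:conductor} to translate ``$\mathbf{f}+\mathbf{n}_i\notin\mathfrak{C}_S$'' into ``$\mathbf{f}+\mathbf{n}_i\leq_{\mathcal{C}}\mathbf{F}_{\mathcal{C}}(S)$''. Your explicit counting reformulation of $\operatorname{s}(S,\mathbf{n}_i)$ and the remarks on injectivity and on passing from a unique maximal element to a maximum are careful touches the paper leaves implicit, but the argument is the same.
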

\begin{proof}
    Suppose $S$ has minimal reduced type and there exist $\mathbf{h}_1 \neq \mathbf{h}_2 \in \mathrm{max}_{\leq_{\mathcal{C}}}\mathcal{H}(S)$. By Lemma \ref{s(S,n_i)non_empty}, $\{\mathbf{h}_1+\mathbf{n}_i,\mathbf{h}_2+\mathbf{n}_i\} \subseteq \mathfrak{C}_S \cap \mathrm{Maximals}_{\leq_S} \mathrm{Ap}(S,\mathbf{n}_i)$ for all $i \in \{1,\ldots,e\}.$ This is the contradiction to the minimal reduced type. Hence, we get $\{\mathbf{F}_\mathcal{C}(S)\}=\mathrm{max}_{\leq_{\mathcal{C}}}\mathcal{H}(S)$. Now, consider $\mathbf{f} \in \operatorname{PF}(S)\setminus \{\mathbf{F}_\mathcal{C}(S)\}$. Observe that $\mathbf{F}_\mathcal{C}(S)+\mathbf{n}_i\in \mathfrak{C}_S \cap \mathrm{Maximals}_{\leq_S} \mathrm{Ap}(S,\mathbf{n}_i)$, for all $i\in \{1,\ldots,e\}$. So, since $S$ has minimal reduced type, we get $\mathbf{f} + \mathbf{n}_i \notin \mathfrak{C}_S$ for all $i \in \{1, \ldots, e\}$ because $\mathbf{f}+\mathbf{n}_i \in \mathrm{Maximals}_{\leq_S} \mathrm{Ap}(S,\mathbf{n}_i)$ for all $i \in \{1, \ldots, e\}.$ Therefore, there exists some $\mathbf{c} \in \mathcal{C}$ such that $\mathbf{f} + \mathbf{n}_i + \mathbf{c} \notin S.$ Since $\mathbf{f} + \mathbf{n}_i + \mathbf{c} \in \mathcal{C}$, we get $\mathbf{f} + \mathbf{n}_i + \mathbf{c} \in \mathcal{H}(S).$ Therefore, we get $\mathbf{f} + \mathbf{n}_i + \mathbf{c} \leq_{\mathcal{C}} \mathbf{F}_\mathcal{C}(S)$. Hence, $\mathbf{f}\leq_{\mathcal{C}} \mathbf{F}_\mathcal{C}(S)-\mathbf{n}_i$ for all $i \in \{1, \ldots, e\}.$ To prove the converse, it is sufficient to prove that $\mathfrak{C}_S \cap \mathrm{Maximals}_{\leq_S}\mathrm{Ap}(S,\mathbf{n}_i) = \{\mathbf{F}_\mathcal{C}(S) + \mathbf{n}_i\} $ for all $i \in \{1, \ldots, e\}.$ This is true because for all $\mathbf{f}\in \operatorname{PF}(S)\setminus \{\mathbf{F}_\mathcal{C}(S)\}$ and for all $i\in \{1,\ldots,e\}$, we have $\mathbf{f}\leq_{\mathcal{C}} \mathbf{F}_\mathcal{C}(S)-\mathbf{n}_i$, and thus $\mathbf{f} + \mathbf{n}_i \notin \mathfrak{C}_S$ for all $\mathbf{f}\in \operatorname{PF}(S)\setminus \{\mathbf{F}_\mathcal{C}(S)\}$ and for all $i\in \{1,\ldots,e\}.$
\end{proof}

Now, we provide some necessary conditions for maximal and minimal reduced type in the case of almost symmetric or quasi-irreducible $\mathcal{C}$-semigroups.

\begin{proposition}\label{prop:minimalRed_almostSym}
    Let $S$ be an almost symmetric $\mathcal{C}$-semigroups. The following holds:
    \begin{enumerate}
        \item[(i)] If $S$ is minimal reduced type, then $ \mathbf{n}_i+\mathbf{n}_j\leq_{\mathcal{C}} \mathbf{F}_\mathcal{C}(S)$ for all $i,j\in \{1,\ldots,e\}$.
        \item[(ii)] If $S$ is maximal reduced type, then for all $i \in \{1,\ldots,e\}$ and for all $\mathbf{f}\in \operatorname{PF}(S)\setminus \{\mathbf{F}_\cC(S)\}$ we have $\mathbf{n}_i\nleq_\cC \mathbf{f}$.
    \end{enumerate}
\end{proposition}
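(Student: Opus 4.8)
The plan is to read off both parts from the combinatorial descriptions of minimal and maximal reduced type already established, the main tool being the structural consequence of almost symmetry in Proposition~\ref{prop:C-almost-symmetric}: for an almost symmetric $\mathcal{C}$-semigroup the element $\mathbf{F}_{\mathcal{C}}(S)$ exists, it is the unique element of $\mathrm{Maximals}_{\leq_{\mathcal{C}}}\mathcal{H}(S)$, and $\mathbf{f}\mapsto \mathbf{F}_{\mathcal{C}}(S)-\mathbf{f}$ is an involution of $\operatorname{PF}(S)\setminus\{\mathbf{F}_{\mathcal{C}}(S)\}$ onto itself (it cannot hit $\mathbf{F}_{\mathcal{C}}(S)$, since $\mathbf{F}_{\mathcal{C}}(S)-\mathbf{f}=\mathbf{F}_{\mathcal{C}}(S)$ would force $\mathbf{f}=\mathbf{0}\notin\mathcal{H}(S)$). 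In both parts the idea is to take an inequality satisfied by a pseudo-Frobenius element $\mathbf{f}$, apply it also to the reflection $\mathbf{F}_{\mathcal{C}}(S)-\mathbf{f}$, and recombine the resulting cone-membership statements.

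For (i), fix $i,j\in\{1,\ldots,e\}$ and choose $\mathbf{f}\in\operatorname{PF}(S)\setminus\{\mathbf{F}_{\mathcal{C}}(S)\}$. Because $S$ has minimal reduced type, Proposition~\ref{equivalent_minreltype} applied with the index $i$ gives $\mathbf{f}\leq_{\mathcal{C}}\mathbf{F}_{\mathcal{C}}(S)-\mathbf{n}_i$, i.e.\ $\mathbf{F}_{\mathcal{C}}(S)-\mathbf{n}_i-\mathbf{f}\in\mathcal{C}$; applying it with the index $j$ to $\mathbf{F}_{\mathcal{C}}(S)-\mathbf{f}$, which again lies in $\operatorname{PF}(S)\setminus\{\mathbf{F}_{\mathcal{C}}(S)\}$ by almost symmetry, gives $\mathbf{F}_{\mathcal{C}}(S)-\mathbf{f}\leq_{\mathcal{C}}\mathbf{F}_{\mathcal{C}}(S)-\mathbf{n}_j$, i.e.\ $\mathbf{f}-\mathbf{n}_j\in\mathcal{C}$. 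Adding these two elements of $\mathcal{C}$ yields $\mathbf{F}_{\mathcal{C}}(S)-\mathbf{n}_i-\mathbf{n}_j\in\mathcal{C}$, that is $\mathbf{n}_i+\mathbf{n}_j\leq_{\mathcal{C}}\mathbf{F}_{\mathcal{C}}(S)$. The only delicate point is that this requires a pseudo-Frobenius element different from $\mathbf{F}_{\mathcal{C}}(S)$; I expect the argument to be carried out under the (implicit) assumption that $S$ is not symmetric, equivalently $\operatorname{t}(S)\ge 2$, since for a symmetric $S$ one already has $\operatorname{PF}(S)=\{\mathbf{F}_{\mathcal{C}}(S)\}$ and the statement must be read accordingly.

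For (ii), I would argue by contradiction. As $S$ is almost symmetric, $\mathrm{Maximals}_{\leq_{\mathcal{C}}}\mathcal{H}(S)=\{\mathbf{F}_{\mathcal{C}}(S)\}$, so by Proposition~\ref{prop:maximal-redtype} the maximal reduced type hypothesis states that $\mathbf{g}+\mathbf{n}_i\nleq_{\mathcal{C}}\mathbf{F}_{\mathcal{C}}(S)$ for every $\mathbf{g}\in\operatorname{PF}(S)$ and every $i$. Suppose, towards a contradiction, that $\mathbf{n}_i\leq_{\mathcal{C}}\mathbf{f}$ for some $i$ and some $\mathbf{f}\in\operatorname{PF}(S)\setminus\{\mathbf{F}_{\mathcal{C}}(S)\}$, i.e.\ $\mathbf{f}-\mathbf{n}_i\in\mathcal{C}$. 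Put $\mathbf{g}=\mathbf{F}_{\mathcal{C}}(S)-\mathbf{f}$, which lies in $\operatorname{PF}(S)$ by almost symmetry; then $\mathbf{g}+\mathbf{n}_i=\mathbf{F}_{\mathcal{C}}(S)-(\mathbf{f}-\mathbf{n}_i)$, and $\mathbf{f}-\mathbf{n}_i\in\mathcal{C}$ forces $\mathbf{g}+\mathbf{n}_i\leq_{\mathcal{C}}\mathbf{F}_{\mathcal{C}}(S)$, contradicting the previous sentence. Hence $\mathbf{n}_i\nleq_{\mathcal{C}}\mathbf{f}$ for all $i$ and all $\mathbf{f}\in\operatorname{PF}(S)\setminus\{\mathbf{F}_{\mathcal{C}}(S)\}$ (the statement being vacuous when $S$ is symmetric). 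Apart from the bookkeeping, the only thing to get right is the faithful translation of ``$S$ has minimal/maximal reduced type'' through Propositions~\ref{equivalent_minreltype} and \ref{prop:maximal-redtype} once $\mathrm{Maximals}_{\leq_{\mathcal{C}}}\mathcal{H}(S)$ has been identified with $\{\mathbf{F}_{\mathcal{C}}(S)\}$; I do not anticipate a deeper obstacle.
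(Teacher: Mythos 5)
Your proof is correct and follows essentially the same route as the paper: part (i) applies Proposition~\ref{equivalent_minreltype} to both $\mathbf{f}$ and its reflection $\mathbf{F}_{\mathcal{C}}(S)-\mathbf{f}$ and combines the two inequalities, and part (ii) derives the same contradiction from $\mathbf{n}_i\leq_{\mathcal{C}}\mathbf{f}$ via $(\mathbf{F}_{\mathcal{C}}(S)-\mathbf{f})+\mathbf{n}_i\leq_{\mathcal{C}}\mathbf{F}_{\mathcal{C}}(S)$. Your remark that (i) needs some $\mathbf{f}\in\operatorname{PF}(S)\setminus\{\mathbf{F}_{\mathcal{C}}(S)\}$ to exist is a fair catch: the paper's proof silently assumes the same, and the conclusion of (i) can indeed fail for symmetric semigroups (e.g.\ $S=\langle 2,3\rangle$), so the statement must be read as excluding that case.
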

\begin{proof}
    By Proposition \ref{equivalent_minreltype}, if $\mathbf{f}\in \operatorname{PF}(S)$ we have $\mathbf{F}_\mathcal{C}(S) - \mathbf{f} \leq_{\mathcal{C}} \mathbf{F}_\mathcal{C}(S) - \mathbf{n}_i$ for all $i \in \{1, \ldots,e\}$. This implies $\mathbf{n}_i \leq_{\mathcal{C}} \mathbf{f} $ for all $\mathbf{f}\in \operatorname{PF}(S)\setminus \{\mathbf{F}_\mathcal{C}(S)\}.$ Thus, we get $\mathbf{n}_j \leq_{\mathcal{C}} \mathbf{f} \leq_{\mathcal{C}} \mathbf{F}_\mathcal{C}(S)-\mathbf{n}_i$ for all $i,j\in \{1,\ldots,n\}$. This completes the proof of (i). To prove (ii), let $\mathbf{f}\in \operatorname{PF}(S)\setminus \{\mathbf{F}_\cC(S)\}$ and $i\in \{1,\ldots,e\}$. Since $S$ is almost symmetric, we have we have $\mathbf{F}_\cC(S)-\mathbf{f}\in \operatorname{PF}(S)$. Moreover, since $S$ has maximal reduced type, we have $(\mathbf{F}_\cC(S)-\mathbf{f})+\mathbf{n}_i\nleq_\cC \mathbf{F}_\cC(S)$. If we assume $\mathbf{n}_i\leq_\cC \mathbf{f}$, then we have $\mathbf{F}_\cC(S)+\mathbf{n}_i \leq _\cC \mathbf{F}_\cC(S)+\mathbf{f}$. In particular, we obtain $(\mathbf{F}_\cC(S)-\mathbf{f})+\mathbf{n}_i\leq_\cC \mathbf{F}_\cC(S)$, a contradiction. So, $\mathbf{n}_i \nleq_\cC \mathbf{f}$. This completes the proof.  
    \end{proof}

    \begin{example}\label{exa: minimalRed-notSymmetric} \rm
     Let $S\subseteq \mathbb{N}^2$ be the generalized numerical semigroup having the following set of gaps:     \begin{align*}
    \cH(S)= 
    \left\lbrace
    \begin{array}{c}
     ( 0, 1 ), ( 0, 2 ), ( 0, 4 ), ( 0, 5 ), ( 0, 7 ), ( 0, 8 ), 
    ( 1, 0 ), ( 1, 1 ), ( 1, 2 ), ( 1, 3 ), ( 1, 4 ), ( 1, 5 ), \\ [1mm] ( 1, 6 ), 
  ( 1, 7 ),( 1, 8 ), ( 2, 0 ), ( 2, 1 ), ( 2, 2 ), ( 2, 3 ), ( 2, 4 ), ( 2, 5 ), ( 2, 6 ), 
  ( 2, 7 ), ( 2, 8 ), \\ [1mm] ( 3, 1 ), ( 3, 2 ),  
  ( 3, 4 ), ( 3, 5 ), ( 3, 7 ), ( 3, 8 ),  ( 4, 0 ), ( 4, 1 ), ( 4, 2 ), ( 4, 3 ), \\[1mm] ( 4, 4 ), ( 5, 2 ), ( 5, 5 ), ( 5, 8 ), ( 8, 2 ), 
  ( 8, 5 ), ( 8, 8 ) 
   \end{array}
  \right\rbrace.
        \end{align*}
        With the help of the \texttt{GAP}\cite{GAP} package \texttt{numericalsgp}\cite{NumericalSgps}, one can check that $S$ is really a generalized numerical semigroup and that $\operatorname{PF}(S)=\{(8,8), (4,4)\}$. Moreover, in this example, the minimal extreme rays are $\mathbf{n}_1=(0,3)$ and $\mathbf{n}_2=(3,0)$ and $\mathbf{F}_{\mathbb{N}^d}(S)=(8,8)$. So, it is easy to check that $S$ has minimal reduced type by Proposition~\ref{equivalent_minreltype}. Moreover, $S$ is almost symmetric but not symmetric, and one can check that Proposition~\ref{prop:minimalRed_almostSym}(i) holds. 
        
        \noindent Furthermore, let $S=\mathbb{N}^2\setminus \{(0,1),(1,0),(1,1),(2,1)\}$. In this case, the set of minimal extremal rays is $\{(0,2),(2,0)\}$ and we have $\operatorname{PF}(S)=\{(1,0),(1,1),(2,1)\}$. In particular, $S$ has maximal reduced type and is also almost symmetric. In particular, one can check that Proposition~\ref{prop:minimalRed_almostSym}(ii) holds. This example shows also that the condition $\mathbf{n}_i\nleq_\cC \mathbf{f}$ cannot be extended to $\mathbf{f}=\mathbf{F}_\cC(S)$. In fact, in this case $\cC=\mathbb{N}^2$, $\mathbf{F}_{\mathbb{N}^2}(S)=(2,1)$ and $(2,0)\leq_{\mathbb{N}^2} \mathbf{F}_{\mathbb{N}^2}(S)$.
    \end{example}

\begin{proposition}\label{prop:quasiIrred-maxRed}
    Let $S$ be a quasi-irreducible $\cC$-semigroup. Then $S$ is maximal reduced type if and only if for all $\mathbf{f}\in \mathrm{Maximals}_{\leq_\cC}\cH(S)$ such that $\frac{\mathbf{f}}{2}\in \operatorname{PF}(S)$ we have $\frac{\mathbf{f}}{2}+\mathbf{n}_i\nleq_\cC \mathbf{F}$ for all $\mathbf{F}\in \mathrm{Maximals}_{\leq_\cC}\cH(S)$ and for all $i\in \{1,\ldots,e\}$.
\end{proposition}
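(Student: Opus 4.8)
The plan is to reduce everything to the characterization of maximal reduced type already obtained in Proposition~\ref{prop:maximal-redtype}: namely, $S$ has maximal reduced type if and only if $\mathbf{f}+\mathbf{n}_i \nleq_{\mathcal{C}} \mathbf{F}$ for every $\mathbf{f}\in\operatorname{PF}(S)$, every $i\in\{1,\ldots,e\}$ and every $\mathbf{F}\in\mathrm{Maximals}_{\leq_\cC}\cH(S)$. The condition in the statement is exactly the restriction of this requirement to those pseudo-Frobenius elements of the form $\mathbf{f}=\tfrac{\mathbf{g}}{2}$ with $\mathbf{g}\in\mathrm{Maximals}_{\leq_\cC}\cH(S)$, so the implication ``$S$ maximal reduced type $\Rightarrow$ the stated condition'' is immediate. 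The content is the converse, and the place where quasi-irreducibility is used is precisely condition (ii) of Theorem~\ref{thm:equivalent-quasiIrred}, which guarantees that every $\mathbf{f}\in\operatorname{PF}(S)$ satisfies $\mathbf{f}\in\mathrm{Maximals}_{\leq_\cC}\cH(S)$ or $2\mathbf{f}\in\mathrm{Maximals}_{\leq_\cC}\cH(S)$.

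For the converse I would fix $\mathbf{f}\in\operatorname{PF}(S)$ and $i\in\{1,\ldots,e\}$ and show $\mathbf{f}+\mathbf{n}_i\nleq_\cC\mathbf{F}$ for all $\mathbf{F}\in\mathrm{Maximals}_{\leq_\cC}\cH(S)$; by Proposition~\ref{prop:maximal-redtype} this suffices. Using Theorem~\ref{thm:equivalent-quasiIrred} there are two cases. If $\mathbf{f}\in\mathrm{Maximals}_{\leq_\cC}\cH(S)$, then Lemma~\ref{s(S,n_i)non_empty} gives $\mathbf{f}+\mathbf{n}_i\in\mathfrak{C}_S$, whence Proposition~\ref{prop:conductor} yields $\mathbf{f}+\mathbf{n}_i\nleq_\cC\mathbf{F}$ for every such $\mathbf{F}$, with no hypothesis needed. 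If instead $\mathbf{f}\notin\mathrm{Maximals}_{\leq_\cC}\cH(S)$, then $2\mathbf{f}\in\mathrm{Maximals}_{\leq_\cC}\cH(S)$; taking $\mathbf{g}=2\mathbf{f}$ we have $\tfrac{\mathbf{g}}{2}=\mathbf{f}\in\operatorname{PF}(S)$, so the hypothesis applied to $\mathbf{g}$ gives exactly $\mathbf{f}+\mathbf{n}_i\nleq_\cC\mathbf{F}$ for all $\mathbf{F}\in\mathrm{Maximals}_{\leq_\cC}\cH(S)$. In both cases the required inequality holds, and Proposition~\ref{prop:maximal-redtype} concludes that $S$ has maximal reduced type.

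I do not anticipate a genuine obstacle here: the argument is a bookkeeping combination of Proposition~\ref{prop:maximal-redtype}, Theorem~\ref{thm:equivalent-quasiIrred}, Lemma~\ref{s(S,n_i)non_empty} and Proposition~\ref{prop:conductor}. The only subtlety worth spelling out is the case $\mathbf{f}\in\mathrm{Maximals}_{\leq_\cC}\cH(S)$, since such an $\mathbf{f}$ need not occur among the ``halves'' $\tfrac{\mathbf{g}}{2}$ singled out in the statement, so the inequality $\mathbf{f}+\mathbf{n}_i\nleq_\cC\mathbf{F}$ must be produced unconditionally — which it is, because translating a maximal $\mathcal{C}$-gap by a minimal extremal ray lands in the conductor. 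If one prefers to avoid invoking Lemma~\ref{s(S,n_i)non_empty}, the same fact follows directly: $\mathbf{f}+\mathbf{n}_i\leq_\cC\mathbf{F}$ would force $\mathbf{f}\leq_\cC\mathbf{F}$, hence $\mathbf{f}=\mathbf{F}$ by maximality, and then $-\mathbf{n}_i\in\mathcal{C}$, which is impossible since $\mathbf{n}_i\neq\mathbf{0}$.
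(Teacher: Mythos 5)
Your proof is correct and follows essentially the same route as the paper's: both directions reduce to Proposition~\ref{prop:maximal-redtype}, and the converse splits on condition (ii) of Theorem~\ref{thm:equivalent-quasiIrred}, handling the case $\mathbf{f}\in\mathrm{Maximals}_{\leq_\cC}\cH(S)$ via Lemma~\ref{s(S,n_i)non_empty} and Proposition~\ref{prop:conductor} and the case $2\mathbf{f}\in\mathrm{Maximals}_{\leq_\cC}\cH(S)$ via the hypothesis. Your explicit handling of the two cases (and the direct maximality argument replacing Lemma~\ref{s(S,n_i)non_empty}) is if anything slightly more carefully worded than the paper's, but the substance is identical.
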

\begin{proof}
   Let us define the set $$T=\left \lbrace\mathbf{f}\in \mathrm{Maximals}_{\leq_\cC}\cH(S)\mid \frac{\mathbf{f}}{2}\in \operatorname{PF}(S)\right \rbrace.$$
   
   \noindent $(\Rightarrow)$. Suppose $S$ has maximal reduced type. Then, by Proposition~\ref{prop:maximal-redtype}, for all $\mathbf{h}\in \operatorname{PF}(S)$ we have $\mathbf{h}+\mathbf{n}_i\nleq_\cC \mathbf{F}$ for all $\mathbf{F}\in \mathrm{Maximals}_{\leq_\cC}\cH(S)$ and for all $i\in \{1,\ldots,e\}$. So, this implication is trivial.

   \noindent $(\Leftarrow)$. Let $\mathbf{h}\in \operatorname{PF}(S)$. Using again Proposition~\ref{prop:maximal-redtype}, we want to show that $\mathbf{h}+\mathbf{n}_i\nleq_\cC \mathbf{F}$ for all $\mathbf{F}\in \mathrm{Maximals}_{\leq_\cC}\cH(S)$ and for all $i\in \{1,\ldots,e\}$. Now, if $\mathbf{h}\in T$ this property hold by hypothesis. So suppose $\mathbf{h}\notin T$. Since $S$ is quasi-irreducible, by (ii) of Theorem~\ref{thm:equivalent-quasiIrred}, we have that $\mathbf{h}\in \mathrm{Maximals}_{\leq_\cC}\cH(S)$. So, by Lemma~\ref{s(S,n_i)non_empty}, $\mathbf{h}+\mathbf{n}_i\in \mathfrak{C}_S$ for all $i\in \{1,\ldots,e\}$. Therefore, by Proposition~\ref{prop:conductor}, $\mathbf{h}+\mathbf{n}_i \nleq_\cC \mathbf{F}$ for all $\mathbf{F}\in \mathrm{Maximals}_{\leq_\cC}\cH(S)$ and for all $i\in \{1,\ldots,e\}$. This completes the proof.
\end{proof}

\begin{corollary}
    Let $S$ be a quasi-symmetric $\mathcal{C}$-semigroup. Then $S$ has maximal reduced type.
\end{corollary}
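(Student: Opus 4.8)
The plan is to unwind the definition of quasi-symmetric and feed it into the criterion of Proposition~\ref{prop:maximal-redtype}, the way we did for quasi-irreducible semigroups in Proposition~\ref{prop:quasiIrred-maxRed}. First I would record that, by the definition of quasi-symmetric (Proposition~\ref{prop:quasi-symmetric}), we have $\mathrm{Maximals}_{\leq_\cC}\cH(S)=\operatorname{PF}(S)$; in particular every pseudo-Frobenius element of $S$ is a maximal gap with respect to $\leq_\cC$. Fix an index $i\in\{1,\ldots,e\}$. By Proposition~\ref{prop:aperyPF}, $\mathrm{Maximals}_{\leq_S}\mathrm{Ap}(S,\mathbf{n}_i)=\{\mathbf{f}+\mathbf{n}_i\mid \mathbf{f}\in \operatorname{PF}(S)\}$, so this set has cardinality $|\operatorname{PF}(S)|=\operatorname{t}(S)$.

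Next I would show the conductor absorbs this whole set. Let $\mathbf{f}\in\operatorname{PF}(S)$. Since $\mathbf{f}\in \mathrm{Maximals}_{\leq_\cC}\cH(S)$, Lemma~\ref{s(S,n_i)non_empty} applies and gives $\mathbf{f}+\mathbf{n}_i\in \mathfrak{C}_S\cap \mathrm{Maximals}_{\leq_S}\mathrm{Ap}(S,\mathbf{n}_i)$. As $\mathbf{f}$ ranges over $\operatorname{PF}(S)$, the elements $\mathbf{f}+\mathbf{n}_i$ exhaust $\mathrm{Maximals}_{\leq_S}\mathrm{Ap}(S,\mathbf{n}_i)$, and every one of them lies in $\mathfrak{C}_S$. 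Hence $\mathfrak{C}_S\cap \mathrm{Maximals}_{\leq_S}\mathrm{Ap}(S,\mathbf{n}_i)=\mathrm{Maximals}_{\leq_S}\mathrm{Ap}(S,\mathbf{n}_i)$, so $\operatorname{s}(S,\mathbf{n}_i)=|\mathrm{Maximals}_{\leq_S}\mathrm{Ap}(S,\mathbf{n}_i)|=\operatorname{t}(S)$. Since $i$ was arbitrary, $S$ has maximal reduced type. (Equivalently, one could invoke Proposition~\ref{prop:maximal-redtype} directly: for each $\mathbf{f}\in\operatorname{PF}(S)=\mathrm{Maximals}_{\leq_\cC}\cH(S)$ and each $i$, Lemma~\ref{s(S,n_i)non_empty} puts $\mathbf{f}+\mathbf{n}_i$ in $\mathfrak{C}_S$, whence $\mathbf{f}+\mathbf{n}_i\nleq_\cC \mathbf{F}$ for all $\mathbf{F}\in\mathrm{Maximals}_{\leq_\cC}\cH(S)$ by Proposition~\ref{prop:conductor}, which is exactly the stated criterion.)

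There is no real obstacle here: the statement is a short consequence of results already proved, and the only point worth flagging is the very first step—quasi-symmetry forces $\operatorname{PF}(S)=\mathrm{Maximals}_{\leq_\cC}\cH(S)$ (so that, by Proposition~\ref{prop:containement}, in fact $\mathrm{Maximals}_{\leq_\cC}\cH(S)=\operatorname{SG}(S)=\operatorname{PF}(S)$, and $S$ is also quasi-irreducible). Once this is in hand, Lemma~\ref{s(S,n_i)non_empty} applies uniformly to every pseudo-Frobenius element, and the conclusion is immediate. If desired, the corollary can also be derived from Proposition~\ref{prop:quasiIrred-maxRed} by checking its hypothesis on those $\mathbf{f}\in\mathrm{Maximals}_{\leq_\cC}\cH(S)$ with $\tfrac{\mathbf{f}}{2}\in\operatorname{PF}(S)$, using that such $\tfrac{\mathbf{f}}{2}$ is itself a $\leq_\cC$-maximal gap and then quoting Lemma~\ref{s(S,n_i)non_empty} and Proposition~\ref{prop:conductor} again.
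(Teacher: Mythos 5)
Your argument is correct, and your main route differs slightly from the paper's. The paper disposes of this corollary in one line by observing that quasi-symmetry forces the set $T=\bigl\{\mathbf{f}\in \mathrm{Maximals}_{\leq_\cC}\cH(S)\mid \tfrac{\mathbf{f}}{2}\in \operatorname{PF}(S)\bigr\}$ of Proposition~\ref{prop:quasiIrred-maxRed} to be empty (indeed, if $\tfrac{\mathbf{f}}{2}\in\operatorname{PF}(S)=\mathrm{Maximals}_{\leq_\cC}\cH(S)$ then $\tfrac{\mathbf{f}}{2}\leq_\cC \mathbf{f}$ would contradict maximality), so the criterion of that proposition holds vacuously. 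You instead verify maximal reduced type directly: $\operatorname{PF}(S)=\mathrm{Maximals}_{\leq_\cC}\cH(S)$ means Lemma~\ref{s(S,n_i)non_empty} applies to every pseudo-Frobenius element, and since Proposition~\ref{prop:aperyPF} shows the elements $\mathbf{f}+\mathbf{n}_i$ exhaust $\mathrm{Maximals}_{\leq_S}\mathrm{Ap}(S,\mathbf{n}_i)$, the whole set lands in $\mathfrak{C}_S$ and $\operatorname{s}(S,\mathbf{n}_i)=\operatorname{t}(S)$ for every $i$. Your version is marginally more self-contained, since it bypasses Proposition~\ref{prop:quasiIrred-maxRed} (and hence the quasi-irreducibility machinery) entirely, at the cost of being a few lines longer; the paper's version is shorter but leans on the preceding proposition. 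You correctly flag the key first step in both readings, namely that quasi-symmetry collapses the chain of Proposition~\ref{prop:containement} so that $\mathrm{Maximals}_{\leq_\cC}\cH(S)=\operatorname{SG}(S)=\operatorname{PF}(S)$, and your closing remark about the alternative derivation via Proposition~\ref{prop:quasiIrred-maxRed} is essentially the paper's proof (its hypothesis is in fact vacuous rather than merely checkable, but either reading is fine).
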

\begin{proof}
Note that being quasi-symmetric is equivalent to $T = \emptyset$ in the Proposition~\ref{prop:quasiIrred-maxRed}, and the result follows.   
\end{proof}

We previously examined the property of having maximal reduced type for the class of quasi-irreducible $\cC$-semigroups. The following results explore the same property for some classes of generalized numerical semigroups obtained by some constructions.

\medskip
\noindent Let $d > 1$ be an integer and $T$ a numerical semigroup. Define the set $$S_T=\{\mathbf{x}=(x_1,\ldots,x_d)\in \mathbb{N}^d\mid \lVert \mathbf{x}\rVert_1 = x_1+\ldots+x_d \in T\}.$$
One can observe that $S_T$ is a generalized numerical semigroup, and it is called $T$-graded generalized numerical semigroup (see \cite{cisto-navarra}). 

\begin{theorem}\label{thm:tgraded_redtype}
   Let $d > 1$ be an integer and $T$ a numerical semigroup. Then $S_T$ has maximal reduced type if and only if $T$ has maximal reduced type. 
\end{theorem}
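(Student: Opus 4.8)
The plan is to compute every invariant of $S_T$ that enters Proposition~\ref{prop:maximal-redtype} directly in terms of $T$, and then observe that the resulting criterion for $S_T$ is literally the criterion produced by the same proposition applied to $T$ itself. Throughout I may assume $T\neq\mathbb{N}$, so that $\cH(T)\neq\emptyset$ and $S_T$ is a genuine generalized numerical semigroup of depth one (the case $T=\mathbb{N}$ being degenerate, as then $S_T=\mathbb{N}^d$ has depth $d$).

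First I would establish the combinatorial dictionary. Since $\operatorname{m}(T)\mathbf{e}_i\in S_T$ for each $i$, we have $\mathrm{cone}(S_T)=\mathbb{Q}_{\geq 0}^d$, hence $\cC=\mathbb{N}^d$ and $\cH(S_T)=\{\mathbf{x}\in\mathbb{N}^d : \lVert\mathbf{x}\rVert_1\in\cH(T)\}$, a finite set; moreover the least point of $S_T$ on each coordinate axis is $\operatorname{m}(T)\mathbf{e}_i$, so $E=\{\operatorname{m}(T)\mathbf{e}_1,\dots,\operatorname{m}(T)\mathbf{e}_d\}$ and $e=d$. Because $\lVert\cdot\rVert_1$ is additive and $\{\lVert\mathbf{s}\rVert_1:\mathbf{s}\in S_T\setminus\{\mathbf{0}\}\}=T\setminus\{0\}$, an element $\mathbf{f}\in\cH(S_T)$ lies in $\operatorname{PF}(S_T)$ exactly when $\lVert\mathbf{f}\rVert_1+t\in T$ for all $t\in T\setminus\{0\}$, i.e. $\operatorname{PF}(S_T)=\{\mathbf{f}\in\mathbb{N}^d:\lVert\mathbf{f}\rVert_1\in\operatorname{PF}(T)\}$. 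Finally, if $\mathbf{x}\in\cH(S_T)$ with $\lVert\mathbf{x}\rVert_1<\operatorname{F}(T)$ then $\mathbf{x}$ is dominated by $\mathbf{x}+(\operatorname{F}(T)-\lVert\mathbf{x}\rVert_1)\mathbf{e}_1\in\cH(S_T)$, while every $\mathbf{F}$ with $\lVert\mathbf{F}\rVert_1=\operatorname{F}(T)$ is a maximal gap; hence $\mathrm{Maximals}_{\leq_{\mathbb{N}^d}}\cH(S_T)=\{\mathbf{F}\in\mathbb{N}^d:\lVert\mathbf{F}\rVert_1=\operatorname{F}(T)\}$.

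Next I would feed this into Proposition~\ref{prop:maximal-redtype}, using the elementary observation that for $\mathbf{a}\in\mathbb{N}^d$ and $N\in\mathbb{N}$ there is some $\mathbf{F}$ with $\lVert\mathbf{F}\rVert_1=N$ and $\mathbf{a}\leq_{\mathbb{N}^d}\mathbf{F}$ if and only if $\lVert\mathbf{a}\rVert_1\leq N$ (take $\mathbf{F}=\mathbf{a}+(N-\lVert\mathbf{a}\rVert_1)\mathbf{e}_1$). By Proposition~\ref{prop:maximal-redtype}, $S_T$ has maximal reduced type iff $\mathbf{f}+\operatorname{m}(T)\mathbf{e}_i\nleq_{\mathbb{N}^d}\mathbf{F}$ for all $\mathbf{f}\in\operatorname{PF}(S_T)$, all $i$, and all $\mathbf{F}\in\mathrm{Maximals}_{\leq_{\mathbb{N}^d}}\cH(S_T)$; by the dictionary and the observation (with $N=\operatorname{F}(T)$ and $\mathbf{a}=\mathbf{f}+\operatorname{m}(T)\mathbf{e}_i$, so $\lVert\mathbf{a}\rVert_1=\lVert\mathbf{f}\rVert_1+\operatorname{m}(T)$), this fails precisely when some $n\in\operatorname{PF}(T)$ has $n+\operatorname{m}(T)\leq\operatorname{F}(T)$. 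The condition depends only on $n=\lVert\mathbf{f}\rVert_1$, not on $i$ or the representative $\mathbf{f}$ (which exists for every such $n$, e.g. $n\mathbf{e}_1$), so the collapse of quantifiers is the only slightly delicate point, and it is exactly what the symmetry of the rays $\operatorname{m}(T)\mathbf{e}_i$ and the domination observation provide. Thus $S_T$ has maximal reduced type iff $n+\operatorname{m}(T)>\operatorname{F}(T)$ for every $n\in\operatorname{PF}(T)$.

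To finish, I would apply the same Proposition~\ref{prop:maximal-redtype} to the numerical semigroup $T$, where $\cC=\mathbb{N}$, $e=1$, $\mathbf{n}_1=\operatorname{m}(T)$ and $\mathrm{Maximals}_{\leq_{\mathbb{N}}}\cH(T)=\{\operatorname{F}(T)\}$: this says $T$ has maximal reduced type iff $\mathbf{f}+\operatorname{m}(T)\nleq_{\mathbb{N}}\operatorname{F}(T)$ for all $\mathbf{f}\in\operatorname{PF}(T)$, i.e. iff $n+\operatorname{m}(T)>\operatorname{F}(T)$ for every $n\in\operatorname{PF}(T)$. Since the two criteria are identical, the equivalence follows. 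The whole argument is essentially bookkeeping once the dictionary of the first step is in hand; no genuinely hard step is involved, the main thing to get right being the reduction of the quantifiers over $i$ and over representatives of a fixed $\ell_1$-norm in the third step.
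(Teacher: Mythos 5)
Your argument is correct and follows essentially the same route as the paper: identify $\operatorname{PF}(S_T)$ and the minimal extremal rays $\operatorname{m}(T)\mathbf{e}_i$, reduce everything to the $\ell_1$-norm, and apply Proposition~\ref{prop:maximal-redtype} to both $S_T$ and $T$. The only (immaterial) difference is that you characterize $\mathrm{Maximals}_{\leq_{\mathbb{N}^d}}\cH(S_T)$ as the norm-$\operatorname{F}(T)$ slice and use the $\nleq$ form of the criterion, whereas the paper computes the conductor $\mathfrak{C}_{S_T}=\{\mathbf{x}:\lVert\mathbf{x}\rVert_1>\operatorname{F}(T)\}$ directly; by Proposition~\ref{prop:conductor} these are the same statement.
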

\begin{proof}
 It is known by \cite[Proposition 4.5]{cisto-navarra} that $\operatorname{PF}(S_T)=\{\mathbf{x}\in \mathbb{N}^d \mid \lVert\mathbf{x}\rVert_1\in \operatorname{PF}(T)\}$. We show that $\mathfrak{C}_{S_T}=\{\mathbf{x}\in \mathbb{N}^d \mid \lVert\mathbf{x}\rVert_1 >\operatorname{F}(T)\}$. First, observe that if $\lVert\mathbf{x}\rVert_1 >\operatorname{F}(T)$, then $\mathbf{x}\in S_T$ and if $\mathbf{y}\in \mathbb{N}^d$, then $\lVert \mathbf{x}+\mathbf{y}\rVert_1=\lVert \mathbf{x}\rVert_1+\lVert \mathbf{y}\rVert_1 >\operatorname{F}(T)$, so $\mathbf{x}+\mathbf{y}\in S_T$. Hence $\mathfrak{C}_{S_T}\supseteq \{\mathbf{x}\in \mathbb{N}^d \mid \lVert\mathbf{x}\rVert_1 >\operatorname{F}(T)\}$. In order to prove the other inclusion, let $\mathbf{x}\in \mathfrak{C}_{S_T}$ and we show that $\lVert\mathbf{x}\rVert_1>\operatorname{F}(T)$. Assume that $\lVert \mathbf{x}\rVert_1\leq \operatorname{F}(T)$. Since $\mathbf{x}\in S_T$, we can assume $\lVert \mathbf{x}\rVert_1< \operatorname{F}(T)$. Hence, there exists $n\in \mathbb{N}\setminus\{0\}$ such that $\lVert \mathbf{x}\rVert_1+n = \operatorname{F}(T)$. Then, there exists $\mathbf{y}\in \mathbb{N}^d$ such that $\lVert \mathbf{y}\rVert_1 =n$. In particular, $\lVert \mathbf{x}+\mathbf{y}\rVert_1=\operatorname{F}(S)$. So, $\mathbf{x}+\mathbf{y}\notin S_T$, a contradiction to $\mathbf{x}\in \mathfrak{C}_{S_T}$. Therefore, $\mathfrak{C}_{S_T}=\{\mathbf{x}\in \mathbb{N}^d \mid \lVert\mathbf{x}\rVert_1 >\operatorname{F}(T)\}$. Now, suppose $T$ has maximal reduced type, and we show that $S_T$ has maximal reduced type. First note that the set of minimal extremal rays of $S_T$ is $\{\operatorname{m}(T)\mathbf{e}_i\mid i\in \{1,\ldots,d\}\}$.
 Let $\mathbf{f}\in \operatorname{PF}(T)$, then $\lVert \mathbf{f}\rVert_1\in \operatorname{PF}(T)$. Since $T$ is maximal reduced type we have $\lVert \mathbf{f}\rVert_1+\operatorname{m}(T) > \operatorname{F}(T)$. In particular, if we take $\operatorname{m}(T)\mathbf{e}_i$ for any $i\in\{1,\ldots,d\}$, then $\lVert \mathbf{f}+\operatorname{m}(T)\mathbf{e}_i\rVert_1>\operatorname{F}(T)$. Hence, $\mathbf{f}+\operatorname{m}(T)\mathbf{e}_i\in \mathfrak{C}_{S_T}$ for all $\mathbf{f} \in \operatorname{PF}(S)$ and for all $i \in \{1,\ldots,d\}$. By Proposition~\ref{prop:conductor} and Proposition~\ref{prop:maximal-redtype} we obtain that $S_T$ has maximal reduced type. Conversely, suppose that $S_T$ has maximal reduced type and we prove $T$ has maximal reduce type. Let $f\in \operatorname{PF}(T)$, it is sufficient to prove that $f+\operatorname{m}(T) > \operatorname{F}(T)$. We know that $f\mathbf{e}_i\in \operatorname{PF}(S_T)$. Since $S_T$ has maximal reduced type, we have $f\mathbf{e}_i+\operatorname{m}(T)\mathbf{e}_i \nleq_{\mathbb{N}^d} \mathbf{F}$ for all $\mathbf{F}\in \operatorname{FA}(S_T)$. In particular, $f\mathbf{e}_i+\operatorname{m}(T)\mathbf{e}_i \nleq_{\mathbb{N}^d} \operatorname{F}(T)\mathbf{e}_i$. But observe that these two vectors are comparable with respect to $\leq_{\mathbb{N}^d}$, so we get $ \operatorname{F}(T)\mathbf{e}_i<_{\mathbb{N}^d} f\mathbf{e}_i+\operatorname{m}(T)\mathbf{e}_i$. In particular, $f+\operatorname{m}(T)>\operatorname{F}(T)$. Hence, $T$ has maximal reduced type. This completes the proof.
\end{proof} 

\begin{remark} \rm
 Let $d > 1$ be an integer and $T$ a numerical semigroup.  Then $S_T$ is never minimal reduced type since $\operatorname{FA}(S_T)=\{\mathbf{x}\in \mathbb{N}^d \mid \lVert \mathbf{x}\rVert_1 =\operatorname{F}(T)\}$ and this set has cardinality greater than 1.  
\end{remark}

\begin{example}
\rm
Let $T=\langle 5,6,7 \rangle=\mathbb{N}\setminus \{1,2,3,4,8,9\}$. Therefore $\operatorname{PF}(T)=\{8,9\}$. The semigroup $S_T\subseteq \mathbb{N}^2$ is pictured in Figure~\ref{fig:S_T}. The set of gaps is the set of points marked in black and blue. In particular, the blue points represent the pseudo-Frobenius elements of $S_T$. The minimal generators are marked in red. All points not marked belong to $S_T$.    

\begin{figure}[h!]
\begin{center}
\begin{tikzpicture}[scale=0.7] 
\usetikzlibrary{patterns}
\draw [help lines] (0,0) grid (10,10);
\draw [<->] (0,11) node [left] {$y$} -- (0,0)
-- (11,0) node [below] {$x$};
\foreach \i in {1,...,10}
\draw (\i,1mm) -- (\i,-1mm) node [below] {$\i$} 
(1mm,\i) -- (-1mm,\i) node [left] {$\i$}; 
\node [below left] at (0,0) {$O$};

\draw [mark=*] plot (0,1);
\draw [mark=*] plot (1,0);
\draw [mark=*] plot (0,2); 
\draw [mark=*] plot (1,1);
\draw [mark=*] plot (2,0);
\draw [mark=*] plot (3,0);
\draw [mark=*] plot (2,1);
\draw [mark=*] plot (1,2);
\draw [mark=*] plot (0,3);
\draw [red, mark=*] plot (5,0);
\draw [red, mark=*] plot (4,1);
\draw [red, mark=*] plot (3,2);
\draw [red, mark=*] plot (2,3);
\draw [red, mark=*] plot (1,4);
\draw [red, mark=*] plot (0,5);
\draw [blue, mark=*] plot (9,0);
\draw [blue, mark=*] plot (8,1);
\draw [blue, mark=*] plot (7,2);
\draw [blue, mark=*] plot (6,3);
\draw [blue, mark=*] plot (5,4);
\draw [blue, mark=*] plot (4,5);
\draw [blue, mark=*] plot (3,6);
\draw [blue, mark=*] plot (2,7);
\draw [blue, mark=*] plot (1,8);
\draw [blue, mark=*] plot (0,9);

\draw [blue, mark=*] plot (8,0);
\draw [blue, mark=*] plot (7,1);
\draw [blue, mark=*] plot (6,2);
\draw [blue, mark=*] plot (5,3);
\draw [blue, mark=*] plot (4,4);
\draw [blue, mark=*] plot (3,5);
\draw [blue, mark=*] plot (2,6);
\draw [blue, mark=*] plot (1,7);
\draw [blue, mark=*] plot (0,8);

\draw [mark=*] plot (4,0);
\draw [mark=*] plot (3,1);
\draw [mark=*] plot (2,2);
\draw [mark=*] plot (1,3);
\draw [mark=*] plot (0,4);
\draw [red, mark=*] plot (0,6);
\draw [red, mark=*] plot (1,5);
\draw [red, mark=*] plot (2,4);
\draw [red, mark=*] plot (3,3);
\draw [red, mark=*] plot (4,2);
\draw [red, mark=*] plot (5,1);
\draw [red, mark=*] plot (6,0);
\draw [red, mark=*] plot (7,0);
\draw [red, mark=*] plot (6,1);
\draw [red, mark=*] plot (5,2);
\draw [red, mark=*] plot (4,3);
\draw [red, mark=*] plot (3,4);
\draw [red, mark=*] plot (2,5);
\draw [red, mark=*] plot (1,6);
\draw [red, mark=*] plot (0,7);

\end{tikzpicture}
\end{center}
\caption{Picture of gaps (black and blue), minimal generators (red) and pseudo-Frobenius elements (blue) of $S_T$, with $T=\langle 5,6,7 \rangle$}
\label{figure-stripe}
\label{fig:S_T}
\end{figure}
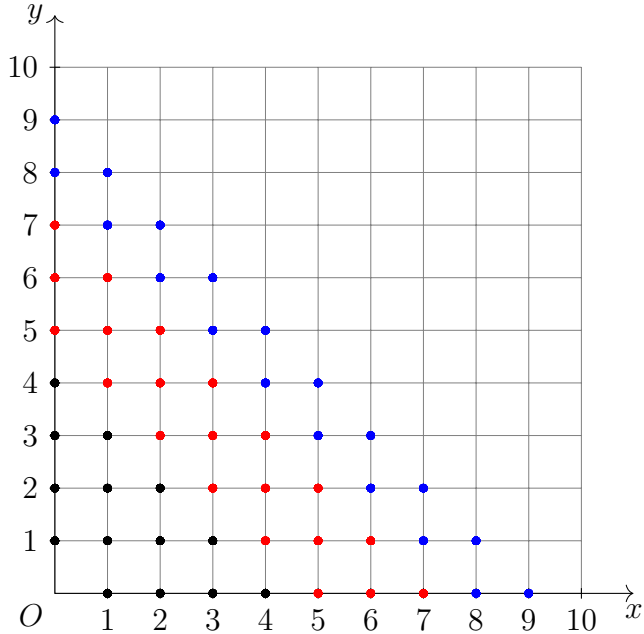    
\end{example}

Next, we consider the $k$-thickening of a generalized numerical semigroup introduced in \cite[Definition 3.6]{cisto2020generalization}. Let $\mathbf{e}_1,\ldots,\mathbf{e}_{d+1}$ be the standard generators of the semigroup $\mathbb{N}^{d+1}$ and consider the semigroup isomorphic to $\N^d$ inside $\N^{d+1}$ generated by $\{\mathbf{e}_1,\ldots,\mathbf{e}_{d+1}\}\setminus \mathbf{e}_i$ for some $i$.  By abuse of notation, we refer to the latter semigroup as $\N^d$.  Suppose $S\subseteq\N^d$ is a semigroup.  The $k$-thickening of $S\subseteq\N^d$ along axis $i$ in $\N^{d+1}$ is the semigroup $\kthick{k}(S,i)\subseteq\N^{d+1}$ defined as
\[
\kthick{k}(S,i)=S\cup (\mathbf{e}_i+S)\cup\cdots\cup (k\mathbf{e}_i+S) \cup ((k+1)\mathbf{e}_i+\N^{d+1}).
\]

\begin{proposition} \label{prop:kthick}
    Let $\kthick{k}(S,i)$ be the $k$-thickening of a generalized numerical semigroup $S\subseteq \mathbb{N}^d$. Then the following holds:
    \begin{itemize}
    \item[(i)] $\cH(\kthick{k}(S,i))= \cH(S)\cup (\mathbf{e}_i+\cH(S))\cup\cdots\cup (k\mathbf{e}_i+\cH(S))$, 
        \item[(ii)] $\operatorname{PF}(\kthick{k}(S,i))=k\mathbf{e}_i+\operatorname{PF}(S)$,
        \item[(iii)] $\mathfrak{C}_{\kthick{k}(S,i)}=((k+1)\mathbf{e}_i+\mathbb{N}^{d+1})\cup (\mathfrak{C}_S+\mathbb{N}^{d+1}).$
    \end{itemize}
\end{proposition}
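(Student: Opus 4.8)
The plan is to distill the whole proposition into a single membership criterion for $T:=\kthick{k}(S,i)$, from which (i)--(iii) follow by short direct checks. Throughout I identify $\N^d$ with the coordinate hyperplane $\{\mathbf{x}\in\N^{d+1}\mid x_i=0\}$ (as in the definition of the thickening), and for $\mathbf{x}\in\N^{d+1}$ I write $\pi(\mathbf{x})=\mathbf{x}-x_i\mathbf{e}_i$ for its projection onto this hyperplane. Since each translate $j\mathbf{e}_i+S$ sits in the ``layer'' $\{x_i=j\}$, while $(k+1)\mathbf{e}_i+\N^{d+1}$ is exactly the union of all layers with $x_i\geq k+1$, the definition of $T$ gives at once
$$\mathbf{x}\in T \iff x_i\geq k+1 \ \text{ or }\ \bigl(x_i\leq k \ \text{ and }\ \pi(\mathbf{x})\in S\bigr). \qquad (\ast)$$
I also record that $T$ is a generalized numerical semigroup in $\N^{d+1}$: it is a semigroup by \cite{cisto2020generalization} (this is also immediate from $(\ast)$), its cone is $\N^{d+1}$ because $(k+1)\mathbf{e}_i+\N^{d+1}\subseteq T$, and $\cH(T)$ is finite (which will follow from (i)); in particular $\overline T=\N^{d+1}$ and $\overline S=\N^d$. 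Part (i) is then just the complement of $(\ast)$: since $\pi(\mathbf{x})\in\N^d=\mathrm{cone}(S)$ we have $\pi(\mathbf{x})\notin S$ iff $\pi(\mathbf{x})\in\cH(S)$, so $\mathbf{x}\in\cH(T)$ iff $x_i\leq k$ and $\pi(\mathbf{x})\in\cH(S)$; sorting these elements by the value $j=x_i\in\{0,\dots,k\}$ yields the stated disjoint union.

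For (ii), I would first check the inclusion $k\mathbf{e}_i+\operatorname{PF}(S)\subseteq\operatorname{PF}(T)$: for $\mathbf{h}\in\operatorname{PF}(S)$ we have $k\mathbf{e}_i+\mathbf{h}\in\cH(T)$ by (i), and for $\mathbf{t}\in T\setminus\{\mathbf{0}\}$ either $t_i\geq 1$, so $k\mathbf{e}_i+\mathbf{h}+\mathbf{t}$ has $i$-th coordinate $\geq k+1$ and lies in $T$ by $(\ast)$, or $t_i=0$, so $\mathbf{t}=\pi(\mathbf{t})\in S\setminus\{\mathbf{0}\}$, whence $\mathbf{h}+\mathbf{t}\in S$ and $k\mathbf{e}_i+\mathbf{h}+\mathbf{t}\in k\mathbf{e}_i+S\subseteq T$. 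For the reverse inclusion, given $\mathbf{f}\in\operatorname{PF}(T)$ I write $\mathbf{f}=j\mathbf{e}_i+\mathbf{h}$ with $0\leq j\leq k$ and $\mathbf{h}\in\cH(S)$ (via (i)); since $\mathbf{e}_i\in T\setminus\{\mathbf{0}\}$, if $j<k$ then $\mathbf{f}+\mathbf{e}_i=(j+1)\mathbf{e}_i+\mathbf{h}$ would have to lie in $T$, contradicting $(\ast)$ because $j+1\leq k$ and $\mathbf{h}\notin S$; hence $j=k$, and then for each $\mathbf{s}\in S\setminus\{\mathbf{0}\}$ (an element of $T\setminus\{\mathbf{0}\}$ of height $0$) the element $\mathbf{f}+\mathbf{s}\in T$ has height $k$, so $(\ast)$ forces $\mathbf{h}+\mathbf{s}=\pi(\mathbf{f}+\mathbf{s})\in S$, i.e. $\mathbf{h}\in\operatorname{PF}(S)$.

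For (iii), using $\overline T=\N^{d+1}$ we have $\mathfrak C_T=\{\mathbf{x}\in T\mid \mathbf{x}+\N^{d+1}\subseteq T\}$. If $x_i\geq k+1$ then every $\mathbf{x}+\mathbf{n}$ has $i$-th coordinate $\geq k+1$, so $\mathbf{x}+\N^{d+1}\subseteq T$ automatically; if $x_i\leq k$ then, testing with the vectors $\mathbf{n}$ having $n_i=0$ (so that $\pi(\mathbf{n})$ runs over all of $\N^d$), the condition $\mathbf{x}+\N^{d+1}\subseteq T$ is equivalent, via $(\ast)$, to $\pi(\mathbf{x})+\N^d\subseteq S$, i.e. to $\pi(\mathbf{x})\in\mathfrak C_S$ (and conversely $\pi(\mathbf{x})\in\mathfrak C_S$ indeed suffices, splitting each $\mathbf{x}+\mathbf{n}$ according to whether its $i$-th coordinate exceeds $k$). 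Hence $\mathfrak C_T=\{\mathbf{x}\in\N^{d+1}\mid x_i\geq k+1\}\cup\{\mathbf{x}\in\N^{d+1}\mid \pi(\mathbf{x})\in\mathfrak C_S\}$; the first set is $(k+1)\mathbf{e}_i+\N^{d+1}$, and since $\mathfrak C_S+\N^d\subseteq\mathfrak C_S$ the second set equals $\mathfrak C_S+\N^{d+1}$, which gives the claimed formula (the overlap $\{\mathbf{x}\mid \pi(\mathbf{x})\in\mathfrak C_S,\ x_i\geq k+1\}$ being harmlessly absorbed into the first set).

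The genuinely delicate part is $(\ast)$ itself, together with the resulting need to keep track cleanly of the hyperplane embedding, the layer index $x_i$, and the projection $\pi$; once that is in place, each of (i)--(iii) is routine, the only spot requiring a moment's thought being the verification in (ii) that a pseudo-Frobenius element of $T$ must have $x_i=k$ exactly, rather than merely $x_i\leq k$.
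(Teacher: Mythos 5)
Your proposal is correct and follows essentially the same route as the paper: both arguments decompose elements of $\kthick{k}(S,i)$ by their $i$-th coordinate layer, force $j=k$ for pseudo-Frobenius elements by adding $\mathbf{e}_i$, and prove (iii) by the same two-inclusion element chase; your criterion $(\ast)$ merely packages the case analysis the paper performs inline. No gaps.
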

\begin{proof}
    The proof of (i) follows immediately from the definition of $\kthick{k}(S,i)$. In order to prove (ii), let $\mathbf{f}\in \operatorname{PF}(\kthick{k}(S,i))$. Write $\mathbf{f}=j\mathbf{e}_i+\mathbf{h}$ such that $\mathbf{h}\in \mathbb{N}^d$ and $j\in \mathbb{N}$. Since $\mathbf{f}$ is a gap of $\kthick{k}(S,i)$, we have $j\leq k$ and $\mathbf{h}\in \cH(S)$. If $j<k$ then $\mathbf{f}=(j+1)\mathbf{e}_i+\mathbf{h}$ with $\mathbf{h}\in \cH(S)$ and $j+1\leq k$. Since $\mathbf{e}_i\in \kthick{k}(S,i)$, we obtain that in this case $\mathbf{f}\notin \operatorname{PF}(\kthick{k}(S,i))$, a contradiction. So, we have $j=k$. Now, take $\mathbf{s}\in S\setminus \{\mathbf{0}\}$. Since $\mathbf{f}\in \operatorname{PF}(\kthick{k}(S,i))$ then we have $\mathbf{f}+\mathbf{s}=k\mathbf{e}_i+(\mathbf{h}+\mathbf{s})\in \kthick{k}(S,i)$. By $j=k$, we obtain that $\mathbf{h}+\mathbf{s}\in S$. So, $\mathbf{h}\in \operatorname{PF}(S)$. Hence $\operatorname{PF}(\kthick{k}(S,i))\subseteq k\mathbf{e}_i+\operatorname{PF}(S)$. Now consider an element of kind $k\mathbf{e}_i+\mathbf{h}$ such that $\mathbf{h}\in \operatorname{PF}(S)$. Take $\mathbf{s}\in \kthick{k}(S,i)$. Write $\mathbf{s}=j\mathbf{e}_i+\mathbf{t}$ such that $\mathbf{t}\in \mathbb{N}^d$ and $j\in \mathbb{N}$. If $j=0$, then $\mathbf{t}\in S$ and $(k\mathbf{e}_i+\mathbf{h})+\mathbf{s}=k\mathbf{e}_i+(\mathbf{h}+\mathbf{t})\in k\mathbf{e}_i+S \subseteq \kthick{k}(S,i)$. If $j>0$, then $(k\mathbf{e}_i+\mathbf{h})+\mathbf{s}\in ((k+1)\mathbf{e}_i+\mathbb{N}^{d+1})\subseteq \kthick{k}(S,i)$. Hence, $k\mathbf{e}_i+\mathbf{h}\in \operatorname{PF}(\kthick{k}(S,i))$. 

    \noindent Now we prove (iii). We first show that $\mathfrak{C}_{\kthick{k}(S,i)}\supseteq ((k+1)\mathbf{e}_i+\mathbb{N}^{d+1})\cup (\mathfrak{C}_S+\mathbb{N}^{d+1}).$ Let $\mathbf{x}\in \mathfrak{C}_S+\mathbb{N}^{d+1}$. We can write $\mathbf{x} =\mathbf{c} + \mathbf{n} $ such that $\mathbf{c}\in \mathfrak{C}_S$ and $\mathbf{n}\in\mathbb{N}^{d+1}$. Now let $\mathbf{t}\in \mathbb{N}^{d+1}$. We can write $\mathbf{t}=j\mathbf{e}_i+\overline{\mathbf{t}}$ with $\overline{\mathbf{t}}\in \mathbb{N}^{d}$ and $j\in \mathbb{N}$. Then $\mathbf{x}+\mathbf{t}=(\mathbf{c}+\overline{\mathbf{t}})+(\mathbf{n}+j\mathbf{e}_i)\in \kthick{k}(S,i)$, since $\mathbf{c}+\overline{\mathbf{t}}\in S$. It is trivial that $((k+1)\mathbf{e}_i+\mathbb{N}^{d+1})\subseteq \mathfrak{C}_{\kthick{k}(S,i)}$. Hence, the first inclusion is proved. For the other inclusion, let $\mathbf{f}\in \mathfrak{C}_{\kthick{k}(S,i)}$. We can write $\mathbf{f}=j\mathbf{e}_i+\mathbf{y}$ with $\mathbf{y}\in \mathbb{N}^d$ and $j\in \mathbb{N}$. We can assume $j\leq k$. Otherwise, we are done. Since $\mathbf{f}\in \kthick{k}(S,i)$, we have that $\mathbf{y}\in S$. Assume $\mathbf{y}\notin \mathfrak{C}_S$. Then there exists $\mathbf{n}\in \mathbb{N}^d$ such that $\mathbf{y}+\mathbf{n}\in \cH(S)$. But this means that $\mathbf{f}+\mathbf{n}=j\mathbf{e}_i+(\mathbf{y}+\mathbf{n})\in j\mathbf{e}_i+\cH(S)\subseteq \cH(\kthick{k}(S,i))$, a contradiction. Therefore, $\mathbf{f}\in \mathfrak{C}_S+\mathbb{N}^{d+1}$. This completes the proof.
\end{proof}

\begin{example}
\label{exa:kthick}\rm 
Let $S=\mathbb{N}\setminus \{1,2,3,4,6,9,11\}=\langle 5,7,8 \rangle$. The $4$-thickening of $S$ along axis $y$ (the 2-nd) is:
	$$\kthick{4}(S,2)=\mathbb{N}^{2}\setminus \{(1,i),(2,i),(3,i),(4,i),(6,i),(9,i),(11,i)\mid i=0,1,2,3,4\}.$$	
	\noindent Put $S'=\kthick{4}(S,2)$. The $5$-thickening of $S'$ along axis $z$ (the 3-rd) is:
	$$\kthick{5}(S',3)=\mathbb{N}^{3}\setminus \{(h_1,h_2,j) \mid (h_1,h_2)\in \cH(S'), j=0,1,2,3,4,5\}.$$ 
	\begin{figure}[h!]
			\begin{tikzpicture}[scale=0.5, >=latex, font=\footnotesize]
				\usetikzlibrary{patterns}
				\draw [help lines] (0,0) grid (13,7);
				\draw [<->] (0,8) node [left] {$y$} -- (0,0)
				-- (14,0) node [below] {$x$};
				\foreach \i in {1,...,13}
				\draw (\i,1mm) -- (\i,-1mm) node [below] {$\i$}; 
				\foreach \i in {1,...,7}
				\draw (1mm,\i) -- (-1mm,\i) node [left] {$\i$}; 
				\node [below left] at (0,0) {$O$};

				\draw [mark=*] plot (1,0);
				\draw [mark=*] plot (2,0);
				\draw [mark=*] plot (3,0);
				\draw [mark=*] plot (4,0);
				\draw [mark=*] plot (6,0);
				\draw [mark=*] plot (9,0);
				\draw [mark=*] plot (11,0);
				\draw [mark=*] plot (1,1);
				\draw [mark=*] plot (2,1);
				\draw [mark=*] plot (3,1);
				\draw [mark=*] plot (4,1);
				\draw [mark=*] plot (6,1);
				\draw [mark=*] plot (9,1);
				\draw [mark=*] plot (11,1);
				\draw [mark=*] plot (1,2);
				\draw [mark=*] plot (2,2);
				\draw [mark=*] plot (3,2);
				\draw [mark=*] plot (4,2);
				\draw [mark=*] plot (6,2);
				\draw [mark=*] plot (9,2);
				\draw [mark=*] plot (11,2);
				\draw [mark=*] plot (1,3);
				\draw [mark=*] plot (2,3);
				\draw [mark=*] plot (3,3);
				\draw [mark=*] plot (4,3);
				\draw [mark=*] plot (6,3);
				\draw [mark=*] plot (9,3);
				\draw [mark=*] plot (11,3);
				\draw [mark=*] plot (1,4);
				\draw [mark=*] plot (2,4);
				\draw [mark=*] plot (3,4);
				\draw [mark=*] plot (4,4);
				\draw [mark=*] plot (6,4);
				\draw [mark=*] plot (9,4);
				\draw [mark=*] plot (11,4);
							
			\end{tikzpicture}
		\qquad
		\includegraphics[scale=0.3]{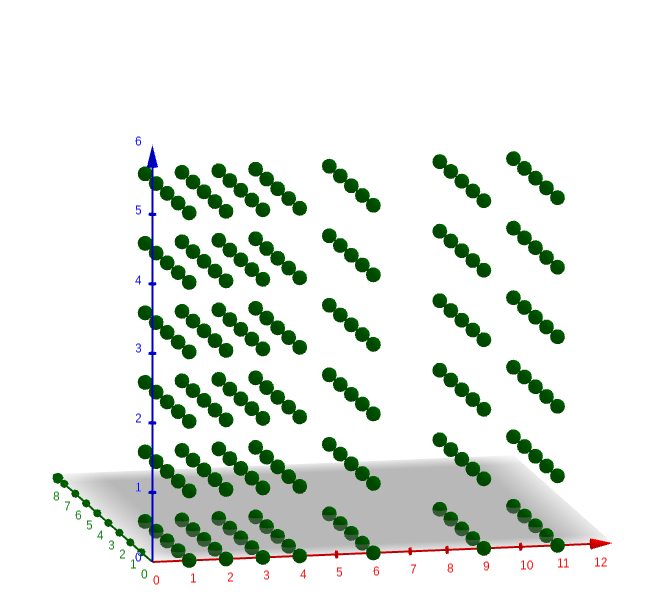}
		\caption{Pictures of the semigroups $\kthick{4}(S,2)$ and $\kthick{5}(S',3)$ of Example~\ref{exa:kthick}. The marked points are the gaps of the semigroups.}
		\label{fig:kthick}
	\end{figure}   

\noindent Figure~\ref{fig:kthick} provides a graphical view of the set of gaps of the generalized numerical semigroups $S$ and $S'$. Observing that $\operatorname{PF}(S)=\{7,8,10\}$, it is not difficult to see that $\operatorname{PF}(\kthick{4}(S,2))=\{(7,4),(8,4),(10,4)\}$ and $\operatorname{PF}(\kthick{5}(S',3))=\{(7,4,5),(8,4,5),(10,4,5)\}$, as stated in Proposition~\ref{prop:kthick}.        
    \end{example}

\begin{theorem}\label{thm:thick_redtype}
    Let $\kthick{k}(S,i)$ be the $k$-thickening of a generalized numerical semigroup $S\subseteq \mathbb{N}^d$. Then $\kthick{k}(S,i)$ has maximal reduced type if and only if $S$ has maximal reduced type. 
    \end{theorem}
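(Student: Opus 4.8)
The plan is to apply the characterization of maximal reduced type from Proposition~\ref{prop:maximal-redtype} to $T\lik\kthick{k}(S,i)$, feeding in the descriptions of $\cH(T)$, $\operatorname{PF}(T)$ and $\mathfrak{C}_T$ supplied by Proposition~\ref{prop:kthick}. First I would record the combinatorial data of $T$. It is again a generalized numerical semigroup: its gap set is finite by Proposition~\ref{prop:kthick}(i), and since $T$ contains a positive multiple of every standard generator of $\mathbb N^{d+1}$ its cone is $\mathbb N^{d+1}$. Its minimal extremal rays are $\mathbf e_i$ together with the minimal extremal rays $\mathbf n_1,\dots,\mathbf n_d$ of $S$: indeed $\mathbf e_i=\mathbf e_i+\mathbf 0\in\mathbf e_i+S\subseteq T$, so the least positive multiple of $\mathbf e_i$ lying in $T$ is $\mathbf e_i$ itself, whereas the only part of $T$ contained in the hyperplane $\{x_i=0\}$ is $S$, so along the remaining axes the minimal extremal rays of $T$ coincide with those of $S$.

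Next I would determine the maximal gaps. By Proposition~\ref{prop:kthick}(i) every gap of $T$ has the form $j\mathbf e_i+\mathbf h$ with $0\le j\le k$ and $\mathbf h\in\cH(S)$; such an element is strictly dominated (for $\leq_{\mathbb N^{d+1}}$) by $(j+1)\mathbf e_i+\mathbf h\in\cH(T)$ when $j<k$, and for $j=k$ the element $k\mathbf e_i+\mathbf h$ is maximal in $\cH(T)$ exactly when $\mathbf h$ is maximal in $\cH(S)$. Hence $\mathrm{Maximals}_{\leq_{\mathbb N^{d+1}}}\cH(T)=k\mathbf e_i+\mathrm{Maximals}_{\leq_{\mathbb N^{d}}}\cH(S)$, while $\operatorname{PF}(T)=k\mathbf e_i+\operatorname{PF}(S)$ by Proposition~\ref{prop:kthick}(ii).

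Then I would run the criterion of Proposition~\ref{prop:maximal-redtype} for $T$. A typical pseudo-Frobenius element of $T$ is $\mathbf g=k\mathbf e_i+\mathbf f$ with $\mathbf f\in\operatorname{PF}(S)$, a typical maximal gap is $\mathbf F=k\mathbf e_i+\mathbf F'$ with $\mathbf F'\in\mathrm{Maximals}_{\leq_{\mathbb N^{d}}}\cH(S)$, and the extremal rays are $\mathbf e_i$ and the $\mathbf n_j$. For the ray $\mathbf e_i$ the inequality $\mathbf g+\mathbf e_i\leq_{\mathbb N^{d+1}}\mathbf F$ fails automatically, by comparing $i$-th coordinates ($k+1>k$), so this ray imposes no condition (consistently, $\mathbf g+\mathbf e_i\in(k+1)\mathbf e_i+\mathbb N^{d+1}\subseteq\mathfrak{C}_T$ by Proposition~\ref{prop:kthick}(iii)). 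For a ray $\mathbf n_j$ both $\mathbf g+\mathbf n_j$ and $\mathbf F$ have $i$-th coordinate $k$, with remaining coordinates $\mathbf f+\mathbf n_j$ and $\mathbf F'$ respectively, so $\mathbf g+\mathbf n_j\leq_{\mathbb N^{d+1}}\mathbf F$ holds if and only if $\mathbf f+\mathbf n_j\leq_{\mathbb N^{d}}\mathbf F'$. Therefore $T$ has maximal reduced type precisely when $\mathbf f+\mathbf n_j\nleq_{\mathbb N^d}\mathbf F'$ for all $\mathbf f\in\operatorname{PF}(S)$, every minimal extremal ray $\mathbf n_j$ of $S$, and every $\mathbf F'\in\mathrm{Maximals}_{\leq_{\mathbb N^d}}\cH(S)$, which by Proposition~\ref{prop:maximal-redtype} applied to $S$ is exactly the statement that $S$ has maximal reduced type. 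This gives the desired equivalence.

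The argument is essentially bookkeeping; the only point needing care is keeping the embedding $\mathbb N^d\hookrightarrow\mathbb N^{d+1}$ along $\{x_i=0\}$ straight, so that the relabelling of the axes of $S$ as the axes of $\mathbb N^{d+1}$ distinct from $i$ is done consistently, and observing that the genuinely new ray $\mathbf e_i$ can never obstruct membership in the conductor. I do not expect any real difficulty beyond this.
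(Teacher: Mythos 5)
Your argument is correct and follows essentially the same route as the paper: both reduce the statement to the criterion of Proposition~\ref{prop:maximal-redtype} via the combinatorial data of Proposition~\ref{prop:kthick}, with the observation that the new ray $\mathbf{e}_i$ never obstructs membership in the conductor. The only cosmetic difference is that you phrase everything through the identification $\mathrm{Maximals}_{\leq_{\mathbb{N}^{d+1}}}\cH(\kthick{k}(S,i))=k\mathbf{e}_i+\mathrm{Maximals}_{\leq_{\mathbb{N}^{d}}}\cH(S)$ and a coordinate comparison, whereas the paper argues the two implications separately using the conductor description in Proposition~\ref{prop:kthick}(iii) together with Proposition~\ref{prop:conductor}; these are equivalent formulations of the same check.
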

    \begin{proof}
    Observe first that if $\{\mathbf{n}_1,\ldots,\mathbf{n}_d\}$ is the set of minimal extremal rays of $S$, then $\{\mathbf{e}_i, \mathbf{n}_1,\ldots,\mathbf{n}_d\}$ is the set of extremal rays of $\kthick{k}(S,i)$. Assume $S$ has maximal reduced type. Let $\mathbf{f}\in \operatorname{PF}(\kthick{k}(S,i))$. So, by Proposition~\ref{prop:kthick}$,\mathbf{f}=k\mathbf{e}_i+\mathbf{f}'$ with $\mathbf{f}'\in \operatorname{PF}(S)$. We show that $\mathbf{f}+\mathbf{e}_i\in \mathfrak{C}_{\kthick{k}(S,i)}$ and $\mathbf{f}+\mathbf{n}_j\in \mathfrak{C}_{\kthick{k}(S,i)}$ for all $j\in \{1,\ldots,d\}$. In fact, $\mathbf{f}+\mathbf{e}_i=(k+1)\mathbf{e}_i+\mathbf{f}'\in (k+1)\mathbf{e}_i+\mathbb{N}^{d+1}\subseteq \mathfrak{C}_{\kthick{k}(S,i)}$, by Proposition~\ref{prop:kthick}. Moreover, if $j\in \{1\ldots,d\}$, then $\mathbf{f}+\mathbf{n_j}=k\mathbf{e}_i+(\mathbf{f}'+\mathbf{n}_j)$. Since $S$ has maximal reduced type, we have that $\mathbf{f}'+\mathbf{n}_j\in \mathfrak{C}_S$. Therefore, by Proposition~\ref{prop:kthick}, we get $\mathbf{f}+\mathbf{n}_j\in \mathfrak{C}_S+\mathbb{N}^{d+1}\subseteq \mathfrak{C}_{\kthick{k}(S,i)}$. Thus, by Proposition~\ref{prop:conductor} and Proposition~\ref{prop:maximal-redtype}, we obtain that $\kthick{k}(S,i)$ has maximal reduced type. For the converse, suppose $\kthick{k}(S,i)$ has maximal reduced type. It is sufficient to show that for all $\mathbf{f}\in \operatorname{PF}(S)$ and for all $j\in \{1,\ldots,d\}$ we have $\mathbf{f}+\mathbf{n}_j\in \mathfrak{C}_S$. So, let $\mathbf{f}\in \operatorname{PF}(S)$ and $j\in \{1,\ldots,d\}$. Assume $\mathbf{f}+\mathbf{n}_j\notin \mathfrak{C}_s$. So, there exists $\mathbf{x}\in \mathbb{N}^{d}\setminus \{\mathbf{0}\}$ such that $\mathbf{f}+\mathbf{n}_j+\mathbf{x}\in \cH(S)$. By Proposition~\ref{prop:kthick}, we have $k\mathbf{e}_i+\mathbf{f}+\mathbf{n}_j+\mathbf{x}\in \cH(\kthick{k}(S,i))$. But this is a contradiction, since $k\mathbf{e}_i+\mathbf{f}+\mathbf{n}_j\in \mathfrak{C}_{\kthick{k}(S,i)}$. This completes the proof.
    \end{proof}

    \begin{remark}\rm 
     Let $\kthick{k}(S,i)$ be the $k$-thickening of a generalized numerical semigroup $S\subseteq \mathbb{N}^d$. If $|\operatorname{PF}(S)|\geq 2$, the $\kthick{k}(S,i)$ has never minimal reduced type. In fact, if $\mathbf{f}_1,\mathbf{f}_2\in \operatorname{PF}(S)$, then $k\mathbf{e}_i+\mathbf{f}_\ell\in \operatorname{PF}(\kthick{k}(S,i))$ and $(k\mathbf{e}_i+\mathbf{f}_\ell)+\mathbf{e}_i\in \mathfrak{C}_{\kthick{k}(S,i)}$, for $\ell=1,2$. In particular, if $S$ has minimal reduced type, then $\kthick{k}(S,i)$ may not have minimal reduced type. Anyway, $\kthick{k}(S,i)$ has minimal reduced type if and only if $|\operatorname{PF}(S)|=1$.
    \end{remark}

\section{Unboundedness properties}

In this section, we study the unboundedness properties of the invariants type, reduced type, and minimal number of irreducible components of a $\mathcal{C}$-semigroup.

\subsection{Unboundedness of the type in fixed embedding dimension} 
Now, we investigate the problem of the boundedness of the type $\operatorname{t}(S)$ in the set of generalized numerical semigroups having fixed embedding dimension $e$. First, we recall what happens if $d =1$, i.e., $S$ is a numerical semigroup. If $e=2$, then $S$ is symmetric and so $\operatorname{t}(S)=1$. If $e=3$, then $\operatorname{t}(S)\leq 2$ by \cite[Corollary 10.22]{rosales2009numerical}. For $e=4$, in \cite{bresinsky} Bresinsky gave an example
of monomial curves in the affine $4$-space $\mathbb{A}^4$ defined by the integers $n_1 = 2h(2h+1)$, $n_2 = (2h-1)(2h+1)$, $n_3 = 2h(2h+1)+(2h-1)$, and $n_4 = (2h-1)2h$, where $h \geq 2$. With the help of this example, he showed that the defining ideal of a monomial curve in the affine $n$-space $\mathbb{A}^n$ does not have an upper bound on the minimal generators. Further, in \cite{stamate}, it is shown that Bresinsky's curves do not have an upper bound on the Betti numbers in terms of its embedding dimension. Since the type of a numerical semigroup corresponds to the last Betti number of the associated semigroup ring when $e=4$, one can conclude the type of a numerical semigroup does not have an upper bound with respect to the embedding dimension. 

To deduce the same property for all $e \geq 4$, we use the construction of gluing of a numerical semigroup. Let us first recall the definition of gluing of numerical semigroups.

\begin{definition} \label{NSgluing}
	Let $S_1$ and $S_2$ be two numerical semigroups minimally generated by $n_1,\ldots,n_r$ and $n_{r+1},\ldots,n_e$ respectively. Let $\mu \in S_1 \setminus \{ n_1,\ldots,n_r \}$ and $\lambda \in S_2 \setminus \{ n_{r+1}, \ldots, n_e \}$ be such that $\gcd(\lambda,\mu)=1.$ We say that $S = \langle \lambda n_1, \ldots, \lambda n_r, \mu n_{r+1}, \ldots, \mu n_{e} \rangle$ is a gluing of $S_1$ and $S_2,$ denoted by $S_1+_{\lambda \mu} S_2$.
\end{definition}

The following proposition by Nari \cite{nari} describes pseudo-Frobenius elements and the type of gluing of numerical semigroups.

\begin{proposition}[{\cite[Proposition 6.6]{nari}}]\label{pfgluing}
	If $S = S_1+_{\lambda \mu} S_2$ (as in Definition {\rm \ref{NSgluing}}), then
	\begin{enumerate}
\item[(i)]	$ \mathrm{PF}(S) = \{ \lambda f + \mu g + \lambda \mu \mid f \in \mathrm{PF}(S_1), g \in \mathrm{PF}(S_2) \}.$
	
	\item[(ii)] $\mathrm{type}(S) = \mathrm{type}(S_1)\mathrm{type}(S_2)$.
\item[(iii)] $\mathrm{F}(S) = \lambda \mathrm{F}(S_1) + \mu \mathrm{F}(S_2) + \lambda \mu$.
	\end{enumerate}
\end{proposition}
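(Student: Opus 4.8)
The plan is to follow Nari's original argument but to make the bookkeeping with the gluing variables explicit. Write $S_1 = \langle n_1, \dots, n_r\rangle$, $S_2 = \langle n_{r+1}, \dots, n_e\rangle$, and $S = S_1 +_{\lambda\mu} S_2 = \langle \lambda n_1, \dots, \lambda n_r, \mu n_{r+1}, \dots, \mu n_e\rangle$, with $\gcd(\lambda,\mu)=1$. The first step is the standard \emph{unique decomposition} lemma for a gluing: every element $s \in S$ can be written uniquely as $s = \lambda a + \mu b$ with $a \in S_1$ and $b \in S_2$ such that $b < \lambda$ (equivalently, $b \in \mathrm{Ap}(S_2,\lambda)$, which is a genuine constraint since $\lambda \in S_2$). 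Uniqueness here is where the coprimality $\gcd(\lambda,\mu)=1$ is used: if $\lambda a + \mu b = \lambda a' + \mu b'$ then $\lambda \mid (b-b')$, and the Apéry-set normalization forces $b = b'$, hence $a = a'$. I would state and prove this as a preliminary lemma, and also record the companion fact that an integer $s \in \mathbb{Z}$ belongs to $S$ if and only if, writing $s = \lambda a + \mu b$ with $0 \le b < \lambda$ (the unique such representation over $\mathbb{Z}$ modulo reducing $b$), one has $a \in S_1$ and $b \in S_2$.

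Next I would prove (iii), the Frobenius number formula, since it also pins down which representation the Frobenius number has. The claim $\mathrm{F}(S) = \lambda\mathrm{F}(S_1) + \mu\mathrm{F}(S_2) + \lambda\mu$ is proved by showing (a) $\lambda\mathrm{F}(S_1)+\mu\mathrm{F}(S_2)+\lambda\mu \notin S$, using the membership criterion above together with the observation that in its canonical representation the $S_2$-coordinate is $\mathrm{F}(S_2) + \lambda$ reduced mod $\lambda$ — one checks the reduction lands on a gap of $S_2$ or the $S_1$-coordinate lands on a gap of $S_1$; and (b) every larger integer is in $S$, by a direct case analysis on the canonical representation. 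This is essentially the classical computation of the Frobenius number of a gluing and I would cite it if a cleaner reference is available, but it is short enough to include.

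For (i), the description of $\mathrm{PF}(S)$: one direction is to take $f \in \mathrm{PF}(S_1)$, $g \in \mathrm{PF}(S_2)$ and show $x := \lambda f + \mu g + \lambda\mu$ is a pseudo-Frobenius element of $S$. First $x \notin S$: in the canonical representation the relevant coordinate is $f$ (a gap of $S_1$) or involves $g$ (a gap of $S_2$), and one argues as for the Frobenius number. Then for any nonzero $s \in S$, write $s = \lambda a + \mu b$ and check $x + s = \lambda(f + a) + \mu(g + b) \in S$ by splitting into the cases $a \ne 0$ and $b \ne 0$: if $a \ne 0$ then $f + a \in S_1$ because $f \in \mathrm{PF}(S_1)$, and one needs $g + b$ to reduce correctly into $S_2$ — here one uses that $g + \mu\cdot(\text{something})$... more precisely one absorbs the extra $\lambda\mu$ by noting $\mu g + \mu b + \lambda \mu$ has $S_2$-part $g + b + \lambda$, and since $g \in \mathrm{PF}(S_2)$ adding any element of $S_2$ (including when $b=0$, adding $\lambda$) lands in $S_2$. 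For the reverse inclusion, take $h \in \mathrm{PF}(S)$, write $h = \lambda a + \mu b - \lambda\mu$ type expression (shifting by $\lambda\mu$), and use that $h + \lambda n_i \in S$ for all $i$ and $h + \mu n_j \in S$ for all $j$ to deduce that the $S_1$-part is in $\mathrm{PF}(S_1)$ and the $S_2$-part is in $\mathrm{PF}(S_2)$. The main obstacle, and the step I would be most careful with, is exactly this reverse inclusion: controlling the canonical representation of $h$ and showing both coordinates are simultaneously pseudo-Frobenius requires juggling the $\lambda$-reduction of the $S_2$-coordinate against the pseudo-Frobenius conditions, and it is easy to mishandle the boundary case where the $S_2$-coordinate is forced to "carry" into the $S_1$-coordinate.

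Finally, (ii) is an immediate corollary of (i): the type of a numerical semigroup equals the cardinality of its set of pseudo-Frobenius elements, and the map $(f,g) \mapsto \lambda f + \mu g + \lambda\mu$ from $\mathrm{PF}(S_1) \times \mathrm{PF}(S_2)$ to $\mathrm{PF}(S)$ is a bijection — surjectivity is (i), and injectivity follows from the uniqueness in the decomposition lemma (distinct pairs give distinct canonical representations, hence distinct integers). Therefore $\mathrm{type}(S) = |\mathrm{PF}(S)| = |\mathrm{PF}(S_1)|\cdot|\mathrm{PF}(S_2)| = \mathrm{type}(S_1)\,\mathrm{type}(S_2)$.
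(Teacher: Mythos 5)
First, a point of reference: the paper does not prove this proposition at all --- it is quoted verbatim from Nari \cite[Proposition 6.6]{nari} (the Frobenius-number formula (iii) goes back to Delorme), so there is no in-paper argument to compare yours against. Your overall architecture --- canonical decomposition of elements of the gluing, membership criterion, then (iii), then both inclusions of (i), then (ii) as a counting corollary --- is the standard and correct route, and your forward inclusion of (i), your proof of (ii) from (i), and your two-sided argument for (iii) are all sound in outline.

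There is, however, a concrete error in the preliminary lemma on which everything else rests. You assert that every $s\in S$ is uniquely $s=\lambda a+\mu b$ with $a\in S_1$, $b\in S_2$ and $b<\lambda$, adding ``equivalently, $b\in\mathrm{Ap}(S_2,\lambda)$.'' These two normalizations are not equivalent, and with the condition $b<\lambda$ the \emph{existence} part fails: take $S_2=\langle 2,3\rangle$ and $\lambda=5\in S_2$, so that $6\in\mathrm{Ap}(S_2,5)$; then $s=6\mu\in\mu S_2\subseteq S$, but $6\mu=\lambda a+\mu b$ with $0\le b<5$ forces $5a=\mu(6-b)$, which has no solution with $a\in\mathbb{Z}$ and $b\in S_2\cap\{0,2,3,4\}$ when $\gcd(\mu,5)=1$. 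The correct normalization is $b\in\mathrm{Ap}(S_2,\lambda)$ (whose elements may exceed $\lambda$), and the correct membership test for an integer $s$ is: reduce the $S_2$-coordinate to the unique element of $\mathrm{Ap}(S_2,\lambda)$ in the residue class of $\mu^{-1}s$ modulo $\lambda$, and ask whether the resulting $S_1$-coordinate lies in $S_1$ --- not ``reduce $b$ into $[0,\lambda)$ and ask whether $b\in S_2$,'' which gives the wrong answer on the example above. This error propagates into your proofs of (iii) and of ``$x\notin S$'' in (i); both can instead be handled cleanly without any normalization by writing a putative representation $\lambda f+\mu g+\lambda\mu=\lambda a+\mu b$, using coprimality to get $b=g+\lambda t$, $a=f+\mu(1-t)$, and deriving a contradiction in each of the cases $t=1$, $t\ge 2$, $t\le 0$ from $f\notin S_1$, $g\notin S_2$, $\mu\in S_1$, $\lambda\in S_2$. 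Finally, the reverse inclusion of (i) --- which you rightly identify as the delicate step --- is not actually carried out; the clean way to do it is via the Ap\'ery set: show $\mathrm{Ap}(S,\lambda\mu)=\{\lambda a+\mu b\mid a\in\mathrm{Ap}(S_1,\mu),\ b\in\mathrm{Ap}(S_2,\lambda)\}$ with the partial order $\le_S$ factoring coordinatewise, so that its maximal elements are exactly the $\lambda a+\mu b$ with $a,b$ maximal, and then apply $\mathrm{PF}(T)=\mathrm{Maximals}_{\le_T}\mathrm{Ap}(T,n)-n$ to all three semigroups. As it stands, the proposal has a false key lemma and an unexecuted hard direction, so it is not yet a proof.
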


\begin{remark}\label{remarktyped=1}
{\rm Let $e \geq 5$ and $S_1$ be a numerical semigroup of embedding dimension $e-4$, and $S_2$ be a numerical semigroup generated by the integers defined in Bresinsky’s curves. Since $S_2$ has embedding dimension $4$, any gluing $S$ of $S_1$ and $S_2$ will have embedding dimension $e$. Therefore, by Proposition~\ref{pfgluing}, the type of $S$ will be the product of the types of $S_1$ and $S_2$. Since the type of $S_2$ depends on the parameter $h$, the type of $S$ will also depend on $h$. Thus, we can conclude that for fix $e \geq 4$, there is no upper bound on the set of types of numerical semigroups of a fixed embedding $e$.}
\end{remark}

\noindent Now, we consider $d > 1$. Recall that we have $\operatorname{e}(S)\geq 2d$ in this case. Let $r\geq 1$ an integer and $a\in \mathbb{N}$ such that $a+r< 2a-1$. Consider $T=\langle a, a+1,\ldots,a+r\rangle\subseteq \mathbb{N}$. Such kinds of semigroups have been studied in \cite{garcia1999numerical}. It is known that $$T=\bigcup_{k=0}^{p-1}[ka,k(a+r)]\cup [pa,+\infty),$$ where $p=\lceil \frac{a-1}{r}\rceil$. In particular, for all $k\in \{0,\ldots,p-1\}$ we have $[k(a+r)+1,(k+1)a-1]\subseteq \operatorname{H}(T)$ (that is, they are gaps of $T$). Define the following generalized numerical semigroup in $\mathbb{N}^d$, for $d\geq 2$.
$$S(a,r)=  \langle \mathbf{e}_1,\ldots, \mathbf{e}_{d-1}, a\mathbf{e}_d, (a+1)\mathbf{e}_d, \ldots, (a+r)\mathbf{e}_d, \mathbf{e}_1+\mathbf{e}_d, \mathbf{e}_2+\mathbf{e}_d,\ldots, \mathbf{e}_{d-1}+\mathbf{e}_{d}\rangle.$$

\begin{proposition} \label{prop:unbound_typeGNS}
    Let $a,r$ be positive integers with $r<a-1$. Then $\operatorname{t}(S(a,r))\geq (d-1)\left(\lceil \frac{a-1}{r}\rceil -1\right).$
\end{proposition}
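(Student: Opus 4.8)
\emph{Proof plan.} Since $S(a,r)$ is a generalized numerical semigroup it is a $\mathcal{C}$-semigroup, hence an MPD-semigroup, so $\operatorname{t}(S(a,r))=|\operatorname{PF}(S(a,r))|$ by the Corollary to Theorem~\ref{thm:type_apery}. The plan is to exhibit $(d-1)\bigl(\lceil\frac{a-1}{r}\rceil-1\bigr)$ pairwise distinct pseudo-Frobenius elements. Set $p=\lceil\frac{a-1}{r}\rceil$, and note that $r<a-1$ forces $p\geq 2$ and $(p-1)r\leq a-2$. For $\mathbf{x}=(x_1,\ldots,x_d)\in\mathbb{N}^d$ write $\sigma(\mathbf{x})=x_1+\cdots+x_{d-1}$. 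Expanding a representation of $\mathbf{x}$ in the given generators and reading off how many times a generator $\mathbf{e}_j+\mathbf{e}_d$ ($j<d$) is used shows that $\mathbf{x}\in S(a,r)$ if and only if $x_d-k\in T$ for some integer $k$ with $0\leq k\leq\sigma(\mathbf{x})$; since $0\in T$ this says that $\mathbf{x}$ is a gap of $S(a,r)$ exactly when $x_d>\sigma(\mathbf{x})$ and the integer interval $[\,x_d-\sigma(\mathbf{x}),\,x_d\,]$ is contained in $\operatorname{H}(T)=\bigcup_{k=0}^{p-1}[k(a+r)+1,(k+1)a-1]$. In particular membership in $S(a,r)$ depends only on the pair $(\sigma(\mathbf{x}),x_d)$, and $\mathbf{f}\in\operatorname{PF}(S(a,r))$ if and only if $\mathbf{f}$ is a gap and $\mathbf{f}+\mathbf{n}\in S(a,r)$ for every $\mathbf{n}$ in the displayed generating set.

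For $k\in\{0,1,\ldots,p-2\}$ and $j\in\{1,\ldots,d-1\}$ I would set $\sigma_k=a-kr-2$ and define
$$\mathbf{h}_{k,j}=\sigma_k\,\mathbf{e}_j+\bigl((k+1)a-1\bigr)\mathbf{e}_d .$$
Since $\sigma_k$ decreases with $k$ and $\sigma_{p-2}=\bigl(a-(p-1)r-2\bigr)+r\geq r\geq 1$, one has $\sigma_k\geq 1$ for every $k\leq p-2$, so $\mathbf{h}_{k,j}$ is a genuine point of $\mathbb{N}^d$ with $\sigma(\mathbf{h}_{k,j})=\sigma_k$. Its associated interval is $[(k+1)a-1-\sigma_k,\,(k+1)a-1]=[k(a+r)+1,\,(k+1)a-1]$, which is the $k$-th gap block of $T$, so $\mathbf{h}_{k,j}$ is a gap of $S(a,r)$. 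Next I would verify the pseudo-Frobenius condition by running through the generators and tracking how each one shifts the pair $(\sigma,x_d)$: adding $\mathbf{e}_{j'}$ with $j'<d$ turns the interval into $[k(a+r),\,(k+1)a-1]$, which meets $T$ at $k(a+r)$; adding $\mathbf{e}_{j'}+\mathbf{e}_d$ turns it into $[k(a+r)+1,\,(k+1)a]$, which meets $T$ at $(k+1)a$; and adding $(a+l)\mathbf{e}_d$ with $0\leq l\leq r$ turns it into $\bigl[(k+1)a+kr+l+1,\,(k+2)a+l-1\bigr]$. For $l\leq r-1$ this interval contains $(k+1)a+kr+l+1\in[(k+1)a,(k+1)(a+r)]\subseteq T$, and for $l=r$ its left endpoint is $(k+1)(a+r)+1$, so using $(k+1)r\leq(p-1)r\leq a-2$ one gets $(k+1)(a+r)+1\leq(k+2)a\leq(k+2)a+r-1$, whence the interval contains $(k+2)a$, which lies in $T$ (here $k+2\leq p$, so $(k+2)a$ is either the bottom of a finite block of $T$ or lies in $[pa,+\infty)$). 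Thus $\mathbf{h}_{k,j}\in\operatorname{PF}(S(a,r))$ for all admissible $k,j$.

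Finally, the $(d-1)(p-1)$ elements $\mathbf{h}_{k,j}$ are pairwise distinct: the last coordinate $(k+1)a-1$ is strictly increasing in $k$, and for a fixed $k$ the index $j$ is the unique one among $1,\ldots,d-1$ at which $\mathbf{h}_{k,j}$ is nonzero (with value $\sigma_k\geq 1$). Hence $\operatorname{t}(S(a,r))=|\operatorname{PF}(S(a,r))|\geq(d-1)(p-1)=(d-1)\bigl(\lceil\frac{a-1}{r}\rceil-1\bigr)$. I expect the main obstacle to be the case $l=r$ in the verification that $\mathbf{h}_{k,j}+(a+l)\mathbf{e}_d\in S(a,r)$: this is precisely where the inequality $(p-1)r\leq a-2$ hidden in $p=\lceil\frac{a-1}{r}\rceil$ is needed, and it is also the reason for running the construction over $k\in\{0,\ldots,p-2\}$ rather than $\{1,\ldots,p-1\}$ — this choice keeps every $\sigma_k$ at least $1$ (so that the $d-1$ vectors at each level are genuinely distinct) and steers clear of the possibly degenerate top gap block of $T$.
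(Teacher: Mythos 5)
Your proposal is correct and follows essentially the same route as the paper: you exhibit exactly the same candidate pseudo-Frobenius elements $(a-kr-2)\mathbf{e}_j+((k+1)a-1)\mathbf{e}_d$ and bound the type by their number, with the only differences being cosmetic — you verify membership in $S(a,r)$ via a clean interval criterion ($\mathbf{x}\in S(a,r)$ iff $[x_d-\sigma(\mathbf{x}),x_d]$ meets $T$) instead of the paper's explicit rewritings as sums of generators, and you restrict to $k\in\{0,\ldots,p-2\}$ from the outset rather than discarding the possibly degenerate level $k=p-1$ at the end. All the inequalities you invoke (in particular $(p-1)r\leq a-2$ and the case $l=r$) check out, so no gap remains.
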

\begin{proof}
For $k\in \{0,\ldots,p-1\}$ and $i\in \{1,\ldots,d-1\}$,  define the following elements: $$\mathbf{p}_{k,i}:=(a-kr-2)\mathbf{e}_i+((k+1)a-1)\mathbf{e}_d.$$
Suppose $\mathbf{p}_{k,i}\in S(a,r)$, then $\mathbf{p}_{k,i}=\sum_{j=0}^r \lambda_j (a+j)\mathbf{e}_d +\sum _{\ell=1}^{d-1} \mu_{\ell} \mathbf{e}_{\ell} +\sum _{\ell=1}^{d-1} \nu_{\ell}( \mathbf{e}_{\ell}+\mathbf{e}_d)$. Easily follows that $\mu_\ell=\nu_\ell=0$ for all $\ell\neq i$. Moreover we have $a-kr-2=\mu_i +\nu_i$ and $\sum_{j=0}^r \lambda_j (a+j)+\nu_i=(k+1)a-1$. But, as consequence, $\sum_{j=0}^r \lambda_j (a+j)=(k+1)a-1-\nu_i\geq (k+1)a-1-a+kr+2=k(a+r)+1$, that is $\sum_{j=0}^r \lambda_j (a+j)\in [k(a+r)+1,(k+1)a-1]\subseteq \operatorname{H}(T)$, a contradiction. So $\mathbf{p}_{k,i}\in \operatorname{H}(S)$ for all $k$ and $i$. We show that they are pseudo-Frobenius elements. Observing that $(k+1)a-1-(k(a+r)+1)=a-kr-2$, and if $t\in T$ then $t\mathbf{e}_d\in S$, we have:
\begin{align*}
\mathbf{p}_{k,i}+a\mathbf{e}_d  & =  (a-kr-2)\mathbf{e}_i+((k+1)a-1)\mathbf{e}_d+a\mathbf{e}_d+(k(a+r)+1)\mathbf{e}_d-(k(a+r)+1)\mathbf{e}_d\\
& =  (a-kr-2)(\mathbf{e}_i+\mathbf{e}_d)+(k(a+r)+(a+1))\mathbf{e}_d \in S(a,r).
\end{align*}
For $j\in \{1\ldots,r\}$ we have:
\begin{align*}
\mathbf{p}_{k,i}+(a+j)\mathbf{e}_d  & =  (a-kr-2)\mathbf{e}_i+((k+1)a+(a+j-1))\mathbf{e}_d \in S(a,r).
\end{align*}
Furthermore, for $\ell \in \{1,\ldots,d-1\}$:
\begin{align*}
\mathbf{p}_{k,i}+(\mathbf{e}_\ell+\mathbf{e}_d)  & =  (a-kr-2)\mathbf{e}_i+\mathbf{e}_\ell+((k+1)a)\mathbf{e}_d \in S(a,r).
\end{align*}
\begin{align*}
\mathbf{p}_{k,i}+\mathbf{e}_\ell  & =  (a-kr-2)\mathbf{e}_i+((k+1)a-1)\mathbf{e}_d+\mathbf{e}_\ell+(k(a+r)+1)\mathbf{e}_d-(k(a+r)+1)\mathbf{e}_d\\
& =  (a-kr-2)(\mathbf{e}_i+\mathbf{e}_d)+k(a+r)\mathbf{e}_d+(\mathbf{e}_\ell+\mathbf{e}_d) \in S(a,r).
\end{align*}
So $\mathbf{p}_{k,i}\in \operatorname{PF}(S(a,r))$ for all $k\in \{0,\ldots,p-1\}$ and $i\in \{1,\ldots,d-1\}$. Moreover, for $k\leq p-2$, the elements are all distinct (it happens $a-(p-1)r-2=0$ when $r=1$). Therefore we have $\operatorname{t}(S)\geq (d-1)(p-1)=(d-1)\left(\lceil \frac{a-1}{r}\rceil -1\right)$.
\end{proof}

\begin{remark} \rm
We obtained that $\operatorname{t}(S(a,r))\geq (d-1)(p-1)=(d-1)\left(\lceil \frac{a-1}{r}\rceil -1\right)$ and we have also $\operatorname{e}(S(a,r))=2d+(r-1)$. In particular, if we fix $r\geq 1$, then considering all $a>r+1$, we obtain a family of generalized numerical semigroups with fixed embedding dimension and unbounded type. Hence, for all values $e\geq 2d$, there exists a family of generalized numerical semigroups having fixed embedding dimensions equal to $e$ and unbounded type.
\end{remark}

The study of unboundedness of the type can be restricted considering special families of generalized numerical semigroups with fixed embedding dimension. For instance, in the numerical semigroup case, an open problem consists to establish if the type is bounded in the family of all almost symmetric numerical semigroups having fixed embeding dimension. This problem can be naturally extended to generalized numerical semigroup.
\begin{question}
    Consider the set of all almost symmetric generalized numerical semigroups having a fixed value of embedding dimension. Does an upper bound exist for the type of these semigroups?
\end{question}
For numerical semigroups, it is known that the type is bounded considering the embedding dimension smaller or equal than $5$ (see \cite{moscarielloedim4, moscarielloedim5}). For a generalized numerical semigroup $S\subseteq \mathbb{N}^d$, the same can be investigated starting for small values of the embedding dimension.

\subsection{Unboundedness of the reduced type in fixed embedding dimension}
The same investigation considered before for the type can be approached for the reduced type using the definition introduced in this paper. We first consider $d =1$, i.e., $S$ is a numerical semigroup. To explore the unboundedness of reduced type, we again consider Bresinky's curves defined by the integers $n_1 = 2h(2h+1)$, $n_2 = (2h-1)(2h+1)$, $n_3 = 2h(2h+1)+(2h-1)$, and $n_4 = (2h-1)2h$, where $h \geq 2$. From \cite{op3}, we describe the pseudo-Frobenius elements of the numerical semigroup generated by the integers defined in Bresinsky's curves.

\begin{theorem}\cite[Theorem 3.8]{op3} \label{pfbresinsky}
Let $S = \langle n_1,n_2,n_3,n_4 \rangle$ be a numerical semigroup generated by the integers $n_1,n_2,n_3,n_4$ defined in Bresinsky's curves. Then 
\begin{align*}
\mathrm{PF}(S) &= \left\lbrace
		\begin{array}{c}
			 (2h-1)^3 + 4h(h-2) + k(2h-1) + 1, ~\text{for}~ 0 \leq k \leq 2h-3   \\[1mm]
			 (2h-1)^3 + 4h(h-2) + 2h(2k+1) + 2, ~\text{for}~ 0 \leq k \leq 2h-2
		\end{array}
		\right\rbrace .
\end{align*}
\end{theorem}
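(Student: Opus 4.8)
The plan is to compute the pseudo-Frobenius set directly from the combinatorial description of $S = \langle n_1, n_2, n_3, n_4\rangle$, where $n_1 = 2h(2h+1)$, $n_2 = (2h-1)(2h+1)$, $n_3 = 2h(2h+1) + (2h-1)$, $n_4 = (2h-1)2h$. Since $S$ is a numerical semigroup of embedding dimension $4$, we know $\operatorname{depth}(K[S]) = 1$ and $\operatorname{t}(S) = |\operatorname{PF}(S)|$; from \cite{stamate} the type of $S$ is $4h-4$, which matches the total count $(2h-2) + (2h-1) = 4h-3$... so in fact one should double-check the index ranges, but the natural target is a set whose cardinality equals the known type. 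The key tool is the characterization $\operatorname{PF}(S) = \{\,\mathbf{y} - n_1 \mid \mathbf{y} \in \operatorname{Maximals}_{\leq_S}\operatorname{Ap}(S, n_1)\,\}$ from Proposition~\ref{prop:aperyPF}, so it suffices to understand $\operatorname{Ap}(S, n_1)$ together with its partial order $\leq_S$.

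First I would find a normal form for the elements of $\operatorname{Ap}(S, n_1)$. One writes each element as a nonnegative combination $a_2 n_2 + a_3 n_3 + a_4 n_4$ and uses the syzygies of Bresinsky's curves (these are classical and explicitly recorded in \cite{bresinsky}, \cite{stamate}) to reduce exponents into a bounded range; the Apéry set then has exactly $n_1 = 2h(2h+1)$ elements, each with a canonical representative. Second, I would identify which of these canonical elements are maximal with respect to $\leq_S$: a representative $w$ is maximal iff $w + n_j \notin \operatorname{Ap}(S, n_1)$ for $j = 2,3,4$, i.e. $w + n_j - n_1 \in S$; this is a finite check that, after substituting the explicit values of the $n_i$, becomes a congruence/inequality condition on the exponents. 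Third, subtracting $n_1$ from each maximal Apéry element yields the pseudo-Frobenius elements; carrying the arithmetic through (with $n_1 = 2h(2h+1)$, $(2h-1)^3 = (2h-1)\cdot(2h-1)^2$, etc.) should produce exactly the two families
\[
(2h-1)^3 + 4h(h-2) + k(2h-1) + 1, \quad 0 \le k \le 2h-3,
\]
\[
(2h-1)^3 + 4h(h-2) + 2h(2k+1) + 2, \quad 0 \le k \le 2h-2,
\]
and I would finish by verifying directly that each listed integer $f$ lies in $\mathcal{H}(S)$ and satisfies $f + n_i \in S$ for $i = 1,2,3,4$, which is the definition of a pseudo-Frobenius element, and that the list has no repetitions.

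The main obstacle is the bookkeeping in the reduction step: Bresinsky's semigroup is precisely the standard example where the defining ideal is complicated, so getting a clean, provably complete set of representatives for $\operatorname{Ap}(S, n_1)$ — and then correctly sieving out the $\leq_S$-maximal ones — requires careful handling of several overlapping cases in the exponent ranges (in particular the boundary cases $k = 0$ and $k = 2h-3$ or $2h-2$, and the parity of $h$). An alternative, perhaps cleaner, route is to take the two displayed families as a candidate set $P$, prove $P \subseteq \operatorname{PF}(S)$ by the direct membership check above (easy arithmetic), and then prove $|P| = \operatorname{t}(S)$ by invoking the Betti-number computation of \cite{stamate} together with $\operatorname{t}(S) = \beta_{r-1}(K[S])$; since $\operatorname{PF}(S)$ has exactly $\operatorname{t}(S)$ elements, the inclusion $P \subseteq \operatorname{PF}(S)$ with matching cardinality forces equality. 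I expect the paper to follow the first (self-contained Apéry-set) approach, but the second is a safe fallback if the syzygy bookkeeping becomes unwieldy.
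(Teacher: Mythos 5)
This theorem is not proved in the paper at all: it is imported verbatim from \cite[Theorem 3.8]{op3}, so there is no internal proof to compare your proposal against. Judged on its own terms, your write-up is a strategy outline rather than a proof, and the gap sits exactly where the theorem's entire content lies. In the first route, the two decisive steps --- producing a provably complete set of canonical representatives for $\mathrm{Ap}(S,n_1)$ via the syzygies, and then sieving out the $\leq_S$-maximal ones --- are deferred with ``carrying the arithmetic through should produce exactly the two families.'' That arithmetic is the theorem; nothing in the proposal certifies that the sieve yields these two families and no others, nor that the boundary cases you yourself flag work out.

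The fallback route has two concrete problems. First, you describe checking that each candidate $f$ satisfies $f+n_i\in S$ and $f\in\mathcal{H}(S)$ as ``easy arithmetic,'' but proving $f\notin S$ means ruling out \emph{every} representation $f=a_1n_1+a_2n_2+a_3n_3+a_4n_4$, which for Bresinsky's family is precisely the hard combinatorial analysis you were trying to avoid --- it is not easier than the Ap\'ery-set bookkeeping. Second, the cardinality argument requires the exact value of $\operatorname{t}(S)$; you quote $4h-4$ from \cite{stamate} while the displayed set has $(2h-2)+(2h-1)=4h-3$ elements, and you leave this discrepancy unresolved (the correct count, consistent with the theorem, is $4h-3$; one also has to check the two families are disjoint, which you mention only in passing). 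Until either the reduction-and-sieve computation is actually executed or the non-membership checks plus the correct type count are nailed down, this is a plausible plan, not a proof. A minor further point: taking the Ap\'ery set with respect to $n_1=2h(2h+1)$ is legitimate but wasteful; the multiplicity is $n_4=2h(2h-1)$, which gives a smaller set to control.
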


\begin{proposition}\label{redtypebre}
  Let $S = \langle n_1,n_2,n_3,n_4 \rangle$ be a numerical semigroup generated by the integers $n_1,n_2,n_3,n_4$ defined in Bresinsky's curves. Then, the reduced type of $S$ is $h$.
\end{proposition}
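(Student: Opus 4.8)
The plan is to combine the explicit list of pseudo-Frobenius elements in Theorem~\ref{pfbresinsky} with the numerical-semigroup formula for the reduced type recalled at the beginning of Section 5:
\[
\operatorname{s}(S,\operatorname{m}(S))=\bigl|\mathcal{H}(S)\cap[\operatorname{F}(S)-\operatorname{m}(S)+1,\operatorname{F}(S)]\bigr|.
\]
The first observation I would make is that every gap $y$ lying in this top window of length $\operatorname{m}(S)$ is automatically a pseudo-Frobenius element: for any $s\in S\setminus\{0\}$ we have $s\geq\operatorname{m}(S)$, so $y+s\geq\operatorname{F}(S)+1$, hence $y+s\in S$. Consequently
\[
\operatorname{s}(S,\operatorname{m}(S))=\bigl|\{\mathbf{f}\in\operatorname{PF}(S)\mid \mathbf{f}>\operatorname{F}(S)-\operatorname{m}(S)\}\bigr|,
\]
and the whole proof reduces to sorting the two families in Theorem~\ref{pfbresinsky}.

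Next I would pin down the two relevant quantities. The multiplicity is $\operatorname{m}(S)=n_4=2h(2h-1)$, since among $n_1,\dots,n_4$ the smallest is $(2h-1)2h$ for $h\geq2$. For the Frobenius number, write $C=(2h-1)^3+4h(h-2)$, so that Theorem~\ref{pfbresinsky} presents $\operatorname{PF}(S)$ as the union of $\{C+k(2h-1)+1\mid 0\leq k\leq 2h-3\}$ and $\{C+2h(2k+1)+2\mid 0\leq k\leq 2h-2\}$. Both lists are strictly increasing in $k$, and comparing their maxima (the first has top term $C+4h^2-8h+4$, the second $C+8h^2-6h+2$) shows that $\operatorname{F}(S)=\max\operatorname{PF}(S)=C+2h(4h-3)+2=C+8h^2-6h+2$, attained at $k=2h-2$ in the second list. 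Hence $\operatorname{F}(S)-\operatorname{m}(S)=C+4h^2-4h+2$.

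Finally I would count the pseudo-Frobenius elements strictly above $\operatorname{F}(S)-\operatorname{m}(S)$. For the first family, $C+k(2h-1)+1>C+4h^2-4h+2$ is equivalent to $k(2h-1)>(2h-1)^2$, i.e.\ $k>2h-1$, which is incompatible with $k\leq 2h-3$; so the first family contributes nothing. For the second family, $C+2h(2k+1)+2>C+4h^2-4h+2$ simplifies (dividing by $2h>0$) to $2k+1>2h-2$, i.e.\ $k\geq h-1$; together with $k\leq 2h-2$ this leaves exactly the $h$ indices $k=h-1,h,\dots,2h-2$, giving $h$ distinct elements by monotonicity. One should also record that these do not coincide with any first-family element, since the smallest of them, $C+4h^2-2h+2$, already exceeds the largest first-family term $C+4h^2-8h+4$. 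Therefore $\operatorname{s}(S,\operatorname{m}(S))=h$.

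I do not expect a genuine obstacle here: the only points requiring care are correctly identifying which of the two families produces $\operatorname{F}(S)$, and handling the integrality step when passing from $2k+1>2h-2$ to $k\geq h-1$; everything else is routine arithmetic with the formulas of Theorem~\ref{pfbresinsky}.
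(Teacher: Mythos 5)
Your proposal is correct and follows essentially the same route as the paper: both compute $\operatorname{m}(S)=2h(2h-1)$ and $\operatorname{F}(S)=(2h-1)^3+4h(h-2)+2h(4h-3)+2$, then sort the two families of Theorem~\ref{pfbresinsky} against the window $[\operatorname{F}(S)-\operatorname{m}(S)+1,\operatorname{F}(S)]$ to find that only the second family with $k\geq h-1$ survives, giving exactly $h$ elements. Your explicit remarks that every gap in the top window is automatically pseudo-Frobenius and that the surviving elements are distinct are worthwhile clarifications of steps the paper leaves implicit, but the argument is the same.
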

\begin{proof} Let us divide the set $\mathrm{PF}(S)$ into two subsets. Consider $$\mathrm{PF}_1(S) = \{(2h-1)^3 + 4h(h-2) + k(2h-1) + 1, ~\text{for}~ 0 \leq k \leq 2h-3\},$$
and 
$$\mathrm{PF}_2(S) = \{(2h-1)^3 + 4h(h-2) + 2h(2k+1) + 2, ~\text{for}~ 0 \leq k \leq 2h-2\}.$$
 From Theorem~\ref{pfbresinsky}, note that $\mathrm{F}(S) = (2h-1)^3+4h(h-2)+2h(4h-3)+2$. Moreover, $m(S) = 2h(2h-1)$.  Therefore, any $f \in \mathrm{PF}(S) $ belongs to $\cH(S) \cap [\mathrm{F}(S)-m(S)+1, \mathrm{F}(S)]$ if and only if $f \in \mathrm{PF}_2(S)$ and $2h(2h-2) + 3 \leq 2h(2k+1)+2.$ This happens if and only if $k \geq h-1$. Therefore, $\cH(S) \cap [\mathrm{F}(S)-m(S)+1, \mathrm{F}(S)] = \{ (2h-1)^3 + 4h(h-2) + 2h(2k+1) + 2, ~\text{for}~ h-1 \leq k \leq 2h-2\}$. Note that the cardinality of this set is $h$; this completes the proof.
\end{proof}

So, in view of Proposition~\ref{redtypebre}, for $d=1$ and $\operatorname{e}(S)=4$, the reduced type does not have any upper bound. To deduce the same property for all $e \geq 4$, we explore the reduced type of a nice extension (a special kind of gluing) of a numerical semigroup. Let us recall the definition of a nice extension of a numerical semigroup.

\begin{definition}[{\cite[Definition 3.1]{arslan}}]
Let $S = \langle n_1, \ldots, n_e \rangle$ be a numerical semigroup. A numerical semigroup $S'$ is called a nice extension of $S$ if there is an element $n_{e+1} = a_1n_1 + \ldots + a_en_e \in S \setminus \{n_1, \ldots, n_e\}$ and an integer $p \leq a_1 + \ldots + a_e$ with $\mathrm{gcd}(p,n_{e+1}) = 1$ such that $S' = \langle pn_1, \ldots, pn_e, n_{e+1}\rangle$. 
\end{definition}

\begin{proposition}\label{redtypeextension}
   Let $S$ be a numerical semigroup and $S'$ be a nice extension of $S$. Then, the reduced type of $S'$ is equal to the reduced type of $S$.
\end{proposition}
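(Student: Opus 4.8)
The plan is to recall the definition of nice extension, express $S'$ as a gluing of $S$ with $\langle p \rangle = \numberset{N}$ (scaled appropriately) and use the gluing formulas for pseudo-Frobenius elements, Frobenius number, and multiplicity to track exactly which pseudo-Frobenius elements of $S'$ land in the ``top window'' $[\operatorname{F}(S')-\operatorname{m}(S')+1, \operatorname{F}(S')]$. Concretely, with $n_{e+1}=a_1n_1+\cdots+a_en_e$ and $p\le a_1+\cdots+a_e$ with $\gcd(p,n_{e+1})=1$, the semigroup $S'=\langle pn_1,\ldots,pn_e,n_{e+1}\rangle$ is the gluing $S +_{1,p} \langle n_{e+1}\rangle$ in the sense of Definition~\ref{NSgluing} (here $\lambda=1$ acting on the generators of $S$, and $\mu=p$ acting on the single generator $n_{e+1}$ of the numerical semigroup $\langle n_{e+1}\rangle$), since $\gcd(p, n_{e+1})=1$ and $n_{e+1}\in S\setminus\{n_1,\ldots,n_e\}$, $p\in \langle n_{e+1}\rangle\setminus\{n_{e+1}\}$ (this last containment is where the hypothesis $p\le a_1+\cdots+a_e$ together with $n_{e+1}=\sum a_in_i$ is used, guaranteeing $p\ge 2$ hence $p\in\langle n_{e+1}\rangle$ as a genuine multiple — one must double-check the degenerate cases $p=1$ and $p$ equal to $n_{e+1}$, but $\gcd(p,n_{e+1})=1$ with $p\ge 2$ rules out the latter).

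Next I would apply Proposition~\ref{pfgluing}. Writing $T_2=\langle n_{e+1}\rangle$, we have $\operatorname{PF}(T_2)=\{-n_{e+1}\}$ (the Frobenius number of $\langle m\rangle$ is $-m$), so $\operatorname{PF}(S')=\{f + p(-n_{e+1}) + p\,n_{e+1} : f\in\operatorname{PF}(S)\}=\{f : f\in \operatorname{PF}(S)\}$ — wait, that collapses; more carefully, $\operatorname{PF}(S')=\{\lambda f+\mu g+\lambda\mu : f\in\operatorname{PF}(S),\ g\in\operatorname{PF}(T_2)\}$ with $\lambda=1,\mu=p,g=-n_{e+1}$, giving $\operatorname{PF}(S')=\{f - p\,n_{e+1}+p : f\in\operatorname{PF}(S)\}$, and likewise $\operatorname{F}(S')=\operatorname{F}(S)-p\,n_{e+1}+p$. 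Hence the map $f\mapsto f - p\,n_{e+1}+p$ is a bijection $\operatorname{PF}(S)\to\operatorname{PF}(S')$ that translates everything (including the Frobenius number) by the same constant $c:=p - p\,n_{e+1}$. Therefore $f\in[\operatorname{F}(S)-\operatorname{m}(S)+1,\operatorname{F}(S)]$ if and only if $f+c\in[\operatorname{F}(S')-\operatorname{m}(S)+1,\operatorname{F}(S')]$.

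The remaining point — and the main obstacle — is to compare the window width, i.e.\ to show $\operatorname{m}(S')=\operatorname{m}(S)$ is \emph{not} what we need; rather we need that the pseudo-Frobenius elements of $S'$ falling in $[\operatorname{F}(S')-\operatorname{m}(S')+1,\operatorname{F}(S')]$ are exactly the images of those of $S$ in $[\operatorname{F}(S)-\operatorname{m}(S)+1,\operatorname{F}(S)]$. Since $S'$ is generated by $\{pn_1,\ldots,pn_e,n_{e+1}\}$, its multiplicity is $\operatorname{m}(S')=\min\{p\,\operatorname{m}(S),\, n_{e+1}\}$; one checks using $p\le a_1+\cdots+a_e$ and $n_{e+1}=\sum a_i n_i\ge (\sum a_i)\operatorname{m}(S)\ge p\,\operatorname{m}(S)$ that $\operatorname{m}(S')=p\,\operatorname{m}(S)$. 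So the $S'$-window has width $p\,\operatorname{m}(S)$, which is wider than $\operatorname{m}(S)$ when $p>1$; I must argue that the extra room does not catch any new pseudo-Frobenius element. For this, observe that for $g\in\operatorname{PF}(S)$, its image $g+c=g-p\,n_{e+1}+p$ lies in $[\operatorname{F}(S')-p\,\operatorname{m}(S)+1,\operatorname{F}(S')]$ iff $g\in[\operatorname{F}(S)-p\,\operatorname{m}(S)+1,\operatorname{F}(S)]$, so a priori more elements could qualify; the key claim is that any $g\in\operatorname{PF}(S)$ with $g> \operatorname{F}(S)-p\,\operatorname{m}(S)$ but $g\le \operatorname{F}(S)-\operatorname{m}(S)$ would force $\operatorname{F}(S)-g$ (which is $\le p\,\operatorname{m}(S)-1 < n_{e+1}$) to be expressible in a way that contradicts $g+c$ being in the conductor-complement of $S'$. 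I would make this precise by translating ``$g+c\in\mathfrak C_{S'}\cap\operatorname{Ap}(S',\operatorname{m}(S'))$'' back through the gluing description of $\mathfrak C_{S'}$ (using Proposition~\ref{pfgluing}(iii) and the gluing formula for the conductor of numerical semigroups), reducing the condition to ``$\operatorname{F}(S)-g \ge n_{e+1}$ or $\operatorname{F}(S)-g$ is a multiple of $n_{e+1}$'' — and since $0\le \operatorname{F}(S)-g< n_{e+1}$ this forces $g=\operatorname{F}(S)$, while if $g\le \operatorname{F}(S)-\operatorname{m}(S)$ the complementary analysis with the generators $pn_i$ shows $\operatorname{F}(S)-g$ must be a multiple of $\operatorname{m}(S)$-type obstruction from $S$ itself, recovering exactly $g\in[\operatorname{F}(S)-\operatorname{m}(S)+1,\operatorname{F}(S)]$. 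Thus the bijection $f\mapsto f+c$ restricts to a bijection between the two reduced-type sets, and $\operatorname{s}(S')=\operatorname{s}(S)$.
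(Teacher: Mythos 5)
Your overall strategy---realize $S'$ as a gluing, transport $\operatorname{PF}(S)$ to $\operatorname{PF}(S')$ via Proposition~\ref{pfgluing}, and check which images land in the window $[\operatorname{F}(S')-\operatorname{m}(S')+1,\operatorname{F}(S')]$---is exactly the paper's, but your gluing data is set up incorrectly and this derails the computation. You write $S'=S+_{1,p}\langle n_{e+1}\rangle$ with $\lambda=1$ acting on the generators of $S$ and $\mu=p$ on $n_{e+1}$; but that glued semigroup would be $\langle n_1,\ldots,n_e,pn_{e+1}\rangle$, not $\langle pn_1,\ldots,pn_e,n_{e+1}\rangle$, and moreover $\langle n_{e+1}\rangle$ is not a numerical semigroup when $n_{e+1}>1$ (infinite complement), so Proposition~\ref{pfgluing} does not apply to it and ``$\operatorname{PF}(\langle n_{e+1}\rangle)=\{-n_{e+1}\}$'' is not meaningful; also $p\notin\langle n_{e+1}\rangle$ in general, precisely because $\gcd(p,n_{e+1})=1$. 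The correct identification is $S'=S+_{\lambda\mu}\mathbb{N}$ with $\lambda=p\in\mathbb{N}\setminus\{1\}$ and $\mu=n_{e+1}\in S\setminus\{n_1,\ldots,n_e\}$, which yields $\operatorname{PF}(S')=\{pf+pn_{e+1}-n_{e+1}\mid f\in\operatorname{PF}(S)\}$ and $\operatorname{F}(S')=p\operatorname{F}(S)+pn_{e+1}-n_{e+1}$. Your formulas $\operatorname{PF}(S')=\{f-pn_{e+1}+p\}$ and $\operatorname{F}(S')=\operatorname{F}(S)-pn_{e+1}+p$ typically produce negative numbers and cannot be right (note $S'\subseteq S$ forces $\operatorname{F}(S')\geq\operatorname{F}(S)$).

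This error matters because it turns the bijection $\operatorname{PF}(S)\to\operatorname{PF}(S')$ into a translation rather than the affine map $f\mapsto pf+pn_{e+1}-n_{e+1}$, and that is what creates the ``window width mismatch'' occupying your last paragraph, for which you only sketch a resolution. With the correct map the issue disappears: since $\operatorname{m}(S')=p\operatorname{m}(S)$ (which you do verify correctly), one has $pf+pn_{e+1}-n_{e+1}\geq\operatorname{F}(S')-\operatorname{m}(S')+1$ if and only if $pf\geq p\operatorname{F}(S)-p\operatorname{m}(S)+1$, i.e.\ $f\geq\operatorname{F}(S)-\operatorname{m}(S)+\tfrac{1}{p}$, i.e.\ $f\geq\operatorname{F}(S)-\operatorname{m}(S)+1$ by integrality. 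So the scaling by $p$ exactly compensates the widening of the window, and the bijection restricts to a bijection of the two reduced-type sets. (The paper proves only $\operatorname{s}(S')\geq\operatorname{s}(S)$ this way and quotes \cite[Theorem 3.34]{maitra2023extremal} for the reverse inequality; the integrality observation above gives both directions directly.)
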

\begin{proof}
Let $S' = \langle pn_1, \ldots, pn_e, n_{e+1}\rangle$ be a nice extension of $S$. Observe that, by taking $\lambda = p$ and $\mu = n_{e+1}$, we get $S' = S+_{\lambda \mu} \mathbb{N}$. Let $f \in \cH(S) \cap [\mathrm{F}(S)- \mathrm{m}(S)+1, \mathrm{F}(S)]$. This means $f \geq \mathrm{F}(S)-\mathrm{m}(S)+1.$ Therefore, we have 
$$p f + p n_{e+1} - n_{e+1} \geq p \mathrm{F}(S)- p \mathrm{m}(S)+ p + p n_{e+1} - n_{e+1}.$$
From Proposition~\ref{pfgluing}, observe that $p \mathrm{F}(S) + p n_{e+1} - n_{e+1} = \mathrm{F}(S')$ and $p f + p n_{e+1} - n_{e+1} \in \mathrm{PF}(S')$. Since the multiplicity $\mathrm{m}(S')$ of $S'$ is equal to $p\mathrm{m}(S)$, we get $p f + p n_{e+1} - n_{e+1} \geq \mathrm{F}(S') - \mathrm{m}(S') + p$. Thus, for each $f \in \cH(S) \cap [\mathrm{F}(S)- \mathrm{m}(S)+1, \mathrm{F}(S)]$ there exist $f' = p f + p n_{e+1} - n_{e+1} \in \mathrm{PF}(S')$ such that $f' \in \cH(S') \cap [\mathrm{F}(S')- \mathrm{m}(S')+1, \mathrm{F}(S')]$. Therefore, $s(S,\mathrm{m}(S')) \geq s(S,\mathrm{m}(S))$. Since the reduced type of $\mathbb{N}$ is one, the reverse inequality $s(S,\mathrm{m}(S')) \leq s(S,\mathrm{m}(S))$ follows from \cite[Theorem 3.34]{maitra2023extremal}. This completes the proof.
\end{proof}

\begin{remark}
{\rm Let $S$ be a numerical semigroup generated by the integers defined in Bresinsky's curves. Its reduced type depends on the parameter $h$. For any $e \geq 5$, we can take iterative nice extensions of $S$, and again, the reduced type of resulting numerical semigroup will depend on the parameter $h$. Thus, we can conclude that for fix $e \geq 4$, there is no upper bound on the set of reduced types of numerical semigroups of a fixed embedding $e$.}
\end{remark}

\noindent We now consider $d > 1$. We take the same class of generalized numerical semigroups introduced in the previous subsection.

\begin{proposition}\label{unboundedredtype}
    Let $a,r$ be positive integers with $r<a-1$. For all $k\in \{0,\ldots,p\}$ and $i\in \{1,\ldots,d-1\}$, let $\mathbf{p}_{k,i}$ be the pseudo Frobenius element defined in the proof of Proposition~\ref{prop:unbound_typeGNS}. Then $\mathbf{p}_{k,i}\in \operatorname{FA}(S(a,r))$.
\end{proposition}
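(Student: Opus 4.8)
The plan is to apply Corollary~\ref{cor:FA(S)}. Since $S(a,r)$ is a generalized numerical semigroup, hence a $\mathcal{C}$-semigroup with $\mathcal{C}=\mathbb{N}^d$, that corollary identifies $\operatorname{FA}(S(a,r))$ with $\mathrm{Maximals}_{\leq_{\mathbb{N}^d}}\mathcal{H}(S(a,r))$. As $\mathbf{p}_{k,i}\in\mathcal{H}(S(a,r))$ is already shown in the proof of Proposition~\ref{prop:unbound_typeGNS}, it suffices to prove that $\mathbf{p}_{k,i}$ is maximal in $\mathcal{H}(S(a,r))$ with respect to $\leq_{\mathbb{N}^d}$; equivalently, that $\mathbf{p}_{k,i}+\mathbf{v}\in S(a,r)$ for every $\mathbf{v}\in\mathbb{N}^d\setminus\{\mathbf{0}\}$.

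First I would record two ingredients already implicit in the proof of Proposition~\ref{prop:unbound_typeGNS}: (a) the membership test for $S(a,r)$, namely that $\mathbf{x}=(x_1,\dots,x_d)$ lies in $S(a,r)$ if and only if $x_d-m\in T$ for some integer $m$ with $0\leq m\leq x_1+\cdots+x_{d-1}$, where $T=\langle a,a+1,\dots,a+r\rangle$; and (b) the description $\operatorname{H}(T)=\bigcup_{j=0}^{p-1}G_j$ from \cite{garcia1999numerical}, where $G_j$ is the integer interval $[\,j(a+r)+1,\,(j+1)a-1\,]$, the $G_j$ being pairwise disjoint and separated by elements of $T$. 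Writing $\mathbf{p}_{k,i}=(a-kr-2)\mathbf{e}_i+((k+1)a-1)\mathbf{e}_d$, the first $d-1$ coordinates sum to $a-kr-2$ and the last equals $(k+1)a-1$, so in the membership test applied to $\mathbf{p}_{k,i}+v_d\mathbf{e}_d$ the quantity $x_d-m$ ranges exactly over the shifted block $G_k+v_d$.

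Then I would fix $\mathbf{v}\in\mathbb{N}^d\setminus\{\mathbf{0}\}$ and split on its last coordinate $v_d$. If $v_d=0$, then $\mathbf{v}=\sum_{\ell=1}^{d-1}v_\ell\mathbf{e}_\ell$ is a \emph{nonzero} element of $S(a,r)$, so $\mathbf{p}_{k,i}+\mathbf{v}\in S(a,r)$ because $\mathbf{p}_{k,i}\in\operatorname{PF}(S(a,r))$. If $v_d\geq 1$, write $\mathbf{v}=v_d\mathbf{e}_d+\mathbf{w}$ with $\mathbf{w}=(v_1,\dots,v_{d-1},0)$, a nonnegative combination of $\mathbf{e}_1,\dots,\mathbf{e}_{d-1}$ and hence in $S(a,r)$; since $S(a,r)$ is closed under addition it is enough to show $\mathbf{p}_{k,i}+v_d\mathbf{e}_d\in S(a,r)$, which by (a) is equivalent to $(G_k+v_d)\cap T\neq\emptyset$. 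If this failed, the block $G_k+v_d$ of consecutive integers would be contained in $\operatorname{H}(T)=\bigcup_j G_j$, and since consecutive integers sitting inside a union of intervals with separating points must lie in a single interval, $G_k+v_d\subseteq G_j$ for some $j\in\{0,\dots,p-1\}$. Comparing right endpoints gives $(k+1)a-1+v_d\leq(j+1)a-1$, whence $v_d\leq(j-k)a$ and so (as $v_d\geq 1$) $j>k$; comparing left endpoints gives $j(a+r)+1\leq k(a+r)+1+v_d$, whence $v_d\geq(j-k)(a+r)$. Combining and dividing by $j-k>0$ yields $r\leq 0$, contradicting $r\geq 1$. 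Hence $(G_k+v_d)\cap T\neq\emptyset$.

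The hard part is this $v_d\geq 1$ case: recognizing via (a) that membership of $\mathbf{p}_{k,i}+v_d\mathbf{e}_d$ in $S(a,r)$ is exactly the assertion that a rightward shift of the $k$-th gap block of $T$ still meets $T$, and then using the explicit shape of $\operatorname{H}(T)$ for $T=\langle a,a+1,\dots,a+r\rangle$ — in particular that its gap blocks shrink and move to the right as the index grows — to rule out that this shifted block can be absorbed into a later gap block. Everything else is a direct application of Corollary~\ref{cor:FA(S)} together with the pseudo-Frobenius property of $\mathbf{p}_{k,i}$ established in Proposition~\ref{prop:unbound_typeGNS}.
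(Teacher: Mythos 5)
Your proof is correct, and while it follows the same overall strategy as the paper --- invoke Corollary~\ref{cor:FA(S)} to reduce the claim to showing that $\mathbf{p}_{k,i}$ is a maximal gap with respect to $\leq_{\mathbb{N}^d}$, dispose of the directions $\mathbf{e}_1,\dots,\mathbf{e}_{d-1}$ using that these are generators (resp.\ that $\mathbf{p}_{k,i}$ is pseudo-Frobenius), and concentrate on showing $\mathbf{p}_{k,i}+v_d\mathbf{e}_d\in S(a,r)$ --- it executes that key step quite differently. The paper proves $\mathbf{p}_{k,i}+j\mathbf{e}_d\in S(a,r)$ for $1\le j\le a-1$ by exhibiting explicit factorizations into the generators, split into the ranges $0<j\le r+1$ and $r+1<j\le a-1$, and then extends to all $\lambda\in\mathbb{N}$. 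You instead translate membership of $\mathbf{p}_{k,i}+v_d\mathbf{e}_d$ into the purely one-dimensional statement $(G_k+v_d)\cap T\neq\emptyset$ via a (correct) membership test for $S(a,r)$, and rule out $(G_k+v_d)\cap T=\emptyset$ by an endpoint comparison showing that no positive translate of $G_k$ fits inside a single gap block $G_j$ of $T$; the argument is airtight (the cases $j\le k$ are excluded by $v_d\le(j-k)a$ together with $v_d\ge 1$, and $j>k$ forces $r\le 0$). This buys uniformity in $v_d$ with no case split, and isolates the essential arithmetic fact about the interval-generated semigroup $T$. It is also somewhat more robust than the paper's computation: in the paper's second range the asserted containment $ka-r-1+j\in[ka,k(a+r)]$ requires $j\le (k+1)r+1$, which can fail for small $k$ (e.g.\ $k=0$, $r=1$, $a$ large), so the explicit decomposition written there does not always apply even though the membership conclusion remains true --- your criterion sidesteps this entirely. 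The only cosmetic point is that the range $k\in\{0,\dots,p\}$ in the statement should read $k\in\{0,\dots,p-1\}$, as in Proposition~\ref{prop:unbound_typeGNS}; this is the range you implicitly use (and the only one for which $a-kr-2\ge 0$).
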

\begin{proof}
For $0<j\leq r+1$ we have:
\begin{align*}
\mathbf{p}_{k,i}+j\mathbf{e}_d  & =  (a-kr-2)\mathbf{e}_i+((k+1)a-1)\mathbf{e}_d+j\mathbf{e}_d\\
& = (a-kr-2)\mathbf{e}_i+ka\mathbf{e}_d+(a+j-1)\mathbf{e}_d \in S(a,r). 
\end{align*}
Moreover, since $ka-r-1+j\in [ka,k(a+r)]$, for $r+1<j\leq a-1$ we have:
\begin{align*}
\mathbf{p}_{k,i}+j\mathbf{e}_d  & =  (a-kr-2)\mathbf{e}_i+((k+1)a-1)\mathbf{e}_d+j\mathbf{e}_d\\
& = (a-kr-2)\mathbf{e}_i+(ka-r+j-1)\mathbf{e}_d+(a+r)\mathbf{e}_d \in S(a,r).
\end{align*}
This means that $\mathbf{p}_{k,i}+\lambda \mathbf{e}_d\in S(a,r)$ for all $\lambda\in \mathbb{N}.$ Moreover, as shown in the proof of Proposition~\ref{prop:unbound_typeGNS}, we have $\mathbf{p}_{k,i}+\mathbf{e}_\ell\in S(a,r)$ for all $\ell \in \{1,\ldots,d-1\}$. So, we can argue $\mathbf{p}_{k,i}+\mathbf{n}\in S(a,r)$ for all $\mathbf{n}\in \mathbb{N}^d$. That is, $\mathbf{p}_{k,i}$ is a maximal gap with respect to $\leq_{\mathbb{N}^d}$, for all $k\in \{0,\ldots,p\}$ and $i\in \{1,\ldots,d-1\}$. This completes the proof by Corollary~\ref{cor:FA(S)}.
\end{proof}

\begin{remark} \rm
By Lemma~\ref{s(S,n_i)non_empty} and using the same argument explained for the type in the previous subsection, we obtained that the reduced type of $S(a,r)$ with respect to any extremal rays is unbounded with respect to embedding dimension, for all $d>1$ and all values of the embedding dimension. 
\end{remark}

\subsection{Unboundedness in irreducible decompositions for $\mathcal{C}$-semigroups}

We start with some properties about the irreducible decomposition of $\cC$-semigroups.

\begin{proposition}\label{prop:special-gaps-decomposition}
    Let $S, S_1,\ldots, S_n$ be $\cC$-semigroups such that $S \subseteq S_i$ for all $i = 1, \ldots,n$. Then the following are equivalent:

    \begin{itemize}
        \item[(i)] $S=S_1\cap S_2 \cap \cdots \cap S_n$.
        \item[(ii)] For each $\mathbf{h}\in \operatorname{SG}(S)$ there exists $i\in \{1,\ldots,n\}$ such that $\mathbf{h}\in \cH(S_i)$.

        \item[(iii)] $\cK(S_1)\cup \cdots \cup \cK(S_n)=\operatorname{SG}(S)$, where $\cK(S_i)=\{\mathbf{h}\in \operatorname{SG}(S)\mid \mathbf{h}\notin S_i\}.$ 
        
    \end{itemize}
\end{proposition}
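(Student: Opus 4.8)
The plan is to prove the cycle of implications $(i)\Rightarrow(ii)\Rightarrow(iii)\Rightarrow(i)$, exploiting the fact (recalled in the excerpt, cf.\ Proposition~\ref{prop:containement} and Theorem~\ref{thm:equivalent-quasiIrred}) that the special gaps $\operatorname{SG}(S)$ are the ``critical'' elements whose membership is enough to detect whether a $\cC$-semigroup strictly between $S$ and $\cC$ is actually equal to $S$. The equivalence $(ii)\Leftrightarrow(iii)$ is essentially a restatement: by definition $\cK(S_i)=\operatorname{SG}(S)\cap\cH(S_i)$, so $(iii)$ says precisely that every $\mathbf{h}\in\operatorname{SG}(S)$ lies in $\cH(S_i)$ for some $i$, and conversely the union is contained in $\operatorname{SG}(S)$ by construction; so the only real content lies in the chain involving $(i)$.

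First I would do $(i)\Rightarrow(ii)$: if $S=\bigcap_i S_i$ and $\mathbf{h}\in\operatorname{SG}(S)\subseteq\cH(S)=\cC\setminus S$, then $\mathbf{h}\notin S$ forces $\mathbf{h}\notin S_i$ for at least one $i$, i.e.\ $\mathbf{h}\in\cH(S_i)$ (note $\mathbf{h}\in\cC$ since $S_i$ is a $\cC$-semigroup, so being outside $S_i$ means being a gap of $S_i$). This direction needs no work beyond unwinding definitions.

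The substantive direction is $(ii)\Rightarrow(i)$ (equivalently $(iii)\Rightarrow(i)$). One inclusion is free: $S\subseteq S_i$ for all $i$ gives $S\subseteq\bigcap_i S_i$. For the reverse inclusion I would argue by contrapositive/minimal-counterexample: suppose some $\mathbf{x}\in\bigcap_i S_i$ with $\mathbf{x}\notin S$; then $\mathbf{x}\in\cH(S)$. The key step is to push $\mathbf{x}$ up to a special gap of $S$ that still lies in $\bigcap_i S_i$. Concretely, among all gaps $\mathbf{g}\in\cH(S)$ with $\mathbf{x}\leq_S\mathbf{g}$ (nonempty since $\mathbf{x}$ itself qualifies, as $\cH(S)$ is finite) one can choose a $\leq_\cC$-maximal such $\mathbf{g}$; then I claim $\mathbf{g}\in\operatorname{SG}(S)$. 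Indeed, $\mathbf{g}$ is pseudo-Frobenius: any $\mathbf{g}+\mathbf{s}$ with $\mathbf{s}\in S\setminus\{\mathbf0\}$ satisfies $\mathbf{x}\leq_S\mathbf{g}+\mathbf{s}$, and $\mathbf{g}\leq_\cC\mathbf{g}+\mathbf{s}$, so maximality of $\mathbf{g}$ (together with $\mathbf{g}\neq\mathbf{g}+\mathbf{s}$) forces $\mathbf{g}+\mathbf{s}\in S$; and $2\mathbf{g}\in S$ by the same reasoning applied to $\mathbf{g}\leq_\cC 2\mathbf g$ (using that $2\mathbf g$ is again a gap would contradict maximality unless it is in $S$) — this is exactly the argument used inside the proof of Proposition~\ref{prop:containement} for $\mathrm{Maximals}_{\leq_\cC}\cH(S)\subseteq\operatorname{SG}(S)$. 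Now $\mathbf{g}=\mathbf{x}+\mathbf{s}_0$ for some $\mathbf{s}_0\in S\subseteq S_i$, and $\mathbf{x}\in S_i$, so $\mathbf{g}\in S_i$ for every $i$; hence $\mathbf{g}\notin\cH(S_i)$ for any $i$, contradicting $(ii)$. Therefore no such $\mathbf{x}$ exists and $\bigcap_i S_i\subseteq S$.

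The main obstacle is the lift from an arbitrary gap $\mathbf{x}$ to a special gap $\mathbf{g}\in\operatorname{SG}(S)$ while preserving membership in $\bigcap_i S_i$: one must choose $\mathbf{g}$ $\leq_\cC$-maximal \emph{above $\mathbf x$ in the $\leq_S$ order} (not merely a $\leq_\cC$-maximal gap of $S$), and then verify carefully that such a $\mathbf g$ is both pseudo-Frobenius and has $2\mathbf g\in S$ — the finiteness of $\cH(S)$ is what guarantees the maximal choice exists, and the $\cC$-semigroup hypothesis on each $S_i$ is what makes ``$\notin S_i$'' synonymous with ``$\in\cH(S_i)$''. Once that lemma-like step is in place, the rest is bookkeeping. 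I would close by remarking that $(ii)$ and $(iii)$ are literally the same statement under the definition $\cK(S_i)=\{\mathbf h\in\operatorname{SG}(S)\mid\mathbf h\notin S_i\}$, so no separate argument is needed there.
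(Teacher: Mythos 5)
Your implications $(i)\Rightarrow(ii)$ and $(ii)\Leftrightarrow(iii)$ are correct (and, as you say, essentially definitional), but the key step of $(ii)\Rightarrow(i)$ --- lifting a gap $\mathbf{x}\in\bigcap_i S_i\setminus S$ to a \emph{special} gap still lying in $\bigcap_i S_i$ --- has a genuine flaw. You take $\mathbf{g}$ to be $\leq_\cC$-maximal in the set $M=\{\mathbf{h}\in\cH(S)\mid \mathbf{x}\leq_S\mathbf{h}\}$ and assert $2\mathbf{g}\in S$ ``by the same reasoning.'' But for $2\mathbf{g}\in\cH(S)$ to contradict the maximality of $\mathbf{g}$, the element $2\mathbf{g}$ must itself belong to $M$, i.e.\ $2\mathbf{g}-\mathbf{x}=\mathbf{g}+(\mathbf{g}-\mathbf{x})$ must lie in $S$; when $\mathbf{g}=\mathbf{x}$ this difference equals $\mathbf{g}\notin S$, so maximality gives nothing. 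Concretely, for the numerical semigroup $S=\langle 5,6,7,8,9\rangle$ and $\mathbf{x}=2$ one has $M=\{2\}$, hence $\mathbf{g}=2$, which is pseudo-Frobenius but not a special gap since $4\notin S$. So the claim $\mathbf{g}\in\operatorname{SG}(S)$ is false for your choice of $\mathbf{g}$, and no contradiction with $(ii)$ is reached.

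The repair is to maximize over a set that is closed under the doubling you need. Put $T=S_1\cap\cdots\cap S_n$; this is again a submonoid containing $S$ with $T\setminus S\subseteq\cH(S)$ finite and, by assumption, nonempty. Choose $\mathbf{g}$ to be $\leq_\cC$-maximal in $T\setminus S$. For $\mathbf{s}\in S\setminus\{\mathbf{0}\}$ we have $\mathbf{g}+\mathbf{s}\in T$ and $\mathbf{g}<_\cC\mathbf{g}+\mathbf{s}$, so by maximality $\mathbf{g}+\mathbf{s}\notin T\setminus S$, i.e.\ $\mathbf{g}+\mathbf{s}\in S$; likewise $2\mathbf{g}=\mathbf{g}+\mathbf{g}\in T$ because $T$ is a semigroup, and $\mathbf{g}<_\cC 2\mathbf{g}$ since $\mathbf{g}\neq\mathbf{0}$, whence $2\mathbf{g}\in S$. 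Thus $\mathbf{g}\in\operatorname{SG}(S)$ while $\mathbf{g}\in S_i$ for every $i$, contradicting $(ii)$. This is the standard argument behind the result the paper defers to (\cite[Proposition 3.6]{cisto-irreducible}); with this substitution the rest of your write-up goes through unchanged.
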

\begin{proof}
    The proof of this proposition can be derived similar to \cite[Proposition 3.6]{cisto-irreducible}.
\end{proof}

Recall that if $S$ is irreducible, then $|SG(S)|=1$.

\begin{theorem} \label{thm:unboundeness-irreducible}
Let $S$ be $\mathcal{C}$-semigroup. If $S=S_1\cap S_2 \cap \cdots \cap S_n$ is a decomposition of $S$ such that $S_i$ is an irreducible $\cC$-semigroup for all $i\in \{1\ldots,n\}$, then $n\geq |\mathrm{Maximals}_{\leq_\cC}\cH(S)|$.
\end{theorem}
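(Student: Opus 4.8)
The plan is to leverage Proposition~\ref{prop:special-gaps-decomposition} together with the containment $\mathrm{Maximals}_{\leq_\cC}\cH(S)\subseteq \operatorname{SG}(S)$ from Proposition~\ref{prop:containement}, plus the fact that an irreducible $\cC$-semigroup has exactly one special gap. First I would fix a decomposition $S=S_1\cap\cdots\cap S_n$ into irreducible $\cC$-semigroups, each containing $S$. For each $i$, since $S_i$ is irreducible we have $|\operatorname{SG}(S_i)|=1$; the key combinatorial claim is that the set $\cK(S_i)=\{\mathbf{h}\in\operatorname{SG}(S)\mid \mathbf{h}\notin S_i\}$ contains \emph{at most one} element of $\mathrm{Maximals}_{\leq_\cC}\cH(S)$.

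To prove that claim, suppose $\mathbf{F}_1,\mathbf{F}_2\in \mathrm{Maximals}_{\leq_\cC}\cH(S)$ both lie in $\cK(S_i)$, i.e.\ both are gaps of $S_i$ (and they belong to $\operatorname{SG}(S)$ by Proposition~\ref{prop:containement}). I want to show $\mathbf{F}_1=\mathbf{F}_2$. Since $S_i$ is irreducible, $\cH(S_i)$ has a unique maximal gap with respect to $\leq_\cC$, call it $\mathbf{F}_\cC(S_i)$, and by Proposition~\ref{prop:quasi-symmetric} (irreducible implies quasi-symmetric, as remarked before Theorem~\ref{thm:equivalent-quasiIrred}, since $|\operatorname{SG}(S_i)|=1$ forces $\mathrm{Maximals}_{\leq_\cC}\cH(S_i)=\operatorname{SG}(S_i)=\{\mathbf{F}_\cC(S_i)\}$, hence also $=\operatorname{PF}(S_i)$, which is the quasi-symmetric condition), for every gap $\mathbf{h}$ of $S_i$ we have $\mathbf{F}_\cC(S_i)-\mathbf{h}\in S_i$, in particular $\mathbf{h}\leq_\cC \mathbf{F}_\cC(S_i)$. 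Thus $\mathbf{F}_1\leq_\cC \mathbf{F}_\cC(S_i)$, so $\mathbf{F}_\cC(S_i)=\mathbf{F}_1+\mathbf{c}$ for some $\mathbf{c}\in\cC$. Now I use that $\mathbf{F}_1\in\operatorname{SG}(S)$, hence $\mathbf{F}_1\in\operatorname{PF}(S)$, so $\mathbf{F}_1+\mathbf{s}\in S$ for all $\mathbf{s}\in S\setminus\{\mathbf{0}\}$; combined with $S\subseteq S_i$ and the fact that $\mathbf{F}_\cC(S_i)$ is itself a gap of $S_i$ (hence of $S$, hence in $\cC$), a short argument pins down $\mathbf{c}$: if $\mathbf{c}\neq\mathbf{0}$ we need $\mathbf{c}\notin S$ (else $\mathbf{F}_1+\mathbf{c}\in S\subseteq S_i$ contradicts $\mathbf{F}_\cC(S_i)\notin S_i$), and then by maximality of $\mathbf{F}_1$ in $\cH(S)$ under $\leq_\cC$ we cannot have $\mathbf{F}_1\leq_\cC \mathbf{F}_1+\mathbf{c}=\mathbf{F}_\cC(S_i)$ with $\mathbf{F}_\cC(S_i)\in\cH(S)$ and $\mathbf{F}_\cC(S_i)\neq \mathbf{F}_1$ — contradiction — so $\mathbf{c}=\mathbf{0}$ and $\mathbf{F}_1=\mathbf{F}_\cC(S_i)$. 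The identical argument gives $\mathbf{F}_2=\mathbf{F}_\cC(S_i)$, hence $\mathbf{F}_1=\mathbf{F}_2$, proving the claim.

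With the claim in hand I finish by a counting argument. By Proposition~\ref{prop:special-gaps-decomposition}, condition (i) is equivalent to (iii): $\cK(S_1)\cup\cdots\cup\cK(S_n)=\operatorname{SG}(S)$. Restricting to $\mathrm{Maximals}_{\leq_\cC}\cH(S)\subseteq\operatorname{SG}(S)$, we get
\[
\mathrm{Maximals}_{\leq_\cC}\cH(S)\;=\;\bigcup_{i=1}^n\bigl(\cK(S_i)\cap \mathrm{Maximals}_{\leq_\cC}\cH(S)\bigr),
\]
and each set on the right has at most one element by the claim. Therefore $|\mathrm{Maximals}_{\leq_\cC}\cH(S)|\le n$, which is the assertion.

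The main obstacle I anticipate is the middle step: cleanly showing that a maximal gap of $S$ (under $\leq_\cC$) that happens to be a gap of the irreducible component $S_i$ must actually \emph{equal} the unique maximal gap $\mathbf{F}_\cC(S_i)$ of $S_i$, rather than merely lying below it. The tension is between two different cones/orders being the same here ($S$ and $S_i$ share $\cC=\mathrm{cone}(S)=\mathrm{cone}(S_i)$ since $S\subseteq S_i\subseteq\cC$), so $\leq_\cC$ is literally the same partial order for both; the real content is combining the $\operatorname{PF}$-property of $\mathbf{F}_1$ in $S$ with the inclusion $S\subseteq S_i$ to rule out a nontrivial shift $\mathbf{c}$. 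Once that lemma-step is secure, the rest is bookkeeping via Proposition~\ref{prop:special-gaps-decomposition} and the inclusion chain of Proposition~\ref{prop:containement}.
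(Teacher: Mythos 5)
Your proof is correct and follows essentially the same route as the paper's: both reduce to showing that a maximal gap of $S$ (under $\leq_\cC$) which is a gap of an irreducible component $S_j$ must coincide with the unique maximal gap of $S_j$, using $\cH(S_j)\subseteq\cH(S)$ and maximality to rule out a nontrivial shift, and then count via Proposition~\ref{prop:special-gaps-decomposition}. One small caveat: your parenthetical justification that irreducibility forces $\operatorname{SG}(S_i)=\operatorname{PF}(S_i)$ (hence quasi-symmetry) is false for pseudo-symmetric components, but the only fact you actually need --- that every gap of $S_i$ lies below its unique $\leq_\cC$-maximal gap --- follows directly from finiteness of $\cH(S_i)$ and $|\mathrm{Maximals}_{\leq_\cC}\cH(S_i)|=1$, so the argument stands.
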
 
\begin{proof}
    Let $S=S_1\cap S_2 \cap \cdots \cap S_n$ be a decomposition of $S$ into irreducible $\cC$-semigroups. We want to show that for all $\mathbf{a}\in \mathrm{Maximals}_{\leq_\cC}\cH(S)$ there exists $j\in \{1,\ldots,n\}$ such that $\operatorname{SG}(S_j)=\{\mathbf{a}\}$. Let $\mathbf{a}\in \mathrm{Maximals}_{\leq_\cC}\cH(S)$. So, $\mathbf{a}\in \operatorname{SG}(S)$ by Proposition~\ref{prop:containement}. By Proposition~\ref{prop:special-gaps-decomposition}, there exists $j\in \{1,\ldots,n\}$ such that $\mathbf{a}\in \cH(S_j)$. Since $S_j$ is irreducible, we have $\operatorname{SG}(S_j)=\{\mathbf{f}\}$ for some $\mathbf{f}\in \cH(S_j)$. Assume $\mathbf{f}\neq \mathbf{a}.$ Since $\mathbf{a}\in \cH(S_j)$ and $\operatorname{SG}(S_j)=\{\mathbf{f}\}$, by Proposition~\ref{prop:containement} we obtain $\mathbf{a}\leq_\cC \mathbf{f}$. But since $S\subseteq S_j$, we have $\mathbf{f}\in \cH(S)$ with $\mathbf{a}\leq_\cC \mathbf{f}$, a contradiction. Therefore, $\mathbf{f}=\mathbf{a}$ and $\operatorname{SG}(S_j)=\{\mathbf{a}\}$. Now, consider $\mathbf{a},\mathbf{b}\in \mathrm{Maximals}_{\leq_\cC}\cH(S)$ with $\mathbf{a}\neq \mathbf{b}$ and let $j,k\in \{1,\ldots,n\}$ such that $\operatorname{SG}(S_j)=\{\mathbf{a}\}$ and $\operatorname{SG}(S_k)=\{\mathbf{b}\}$. We can observe that $S_j\neq S_k$, because $\mathbf{b}\notin S_k$ but $\mathbf{b}\in S_j$. In fact, assume $\mathbf{b}\notin S_j$. Hence, since $\mathrm{Maximals}_{\leq_\cC}\cH(S_j)=\{\mathbf{a}\}$, we obtain $\mathbf{b}\leq_\cC \mathbf{a}$, a contradiction to $\mathbf{a},\mathbf{b}\in \mathrm{Maximals}_{\leq_\cC}\cH(S)$. This completes the proof.
\end{proof}

\begin{remark} \rm
  The bound provided in Theorem~\ref{thm:unboundeness-irreducible} can be sharp. For instance, consider the generalized numerical semigroup $S=\mathbb{N}^2\setminus \{ (0, 1), (1, 0), (1, 2), (2, 1)\}$. In this case $\cC=\mathbb{N}^2$ and $\operatorname{SG}(S)=\operatorname{FA}(S)=\{(1,2),(2,1)\}$. A decomposition of $S$ into irreducibles is $S=S_1 \cap S_2$, where $S_1=\mathbb{N}^2\setminus \{( 0, 1 ), ( 1, 0 ), ( 1, 2 )\}$ and $S_2=\mathbb{N}^2\setminus \{( 0, 1 ), ( 1, 0 ), ( 2,1 )\}$. 
\end{remark}

\noindent In \cite{bogart2024unboundedness}, it is shown that in the set of all numerical semigroups, for any positive integer $k$, there exists a numerical semigroup having a decomposition in irreducibles with at least $k$ components. One can have a similar result in the case of $\mathcal{C}$-semigroups by taking advantage of the Theorem~\ref{thm:unboundeness-irreducible}.

\begin{definition} \rm
    Let $\mathcal{C}$ be an integer cone and $A\subseteq \cC$ be a finite subset such that $\mathbf{a}\nleq_\cC \mathbf{b}$ for all $\mathbf{a},\mathbf{b}\in A$. We call a subset of this kind a $\cC$-\emph{antichain}. Denote also 
    $$\mathcal{B}_\cC(A)=\{\mathbf{x}\in \cC\setminus \{\mathbf{0}\} \mid \mathbf{x}\leq_\cC \mathbf{a}\text{ for some }\mathbf{a}\in A\}.$$ 
\end{definition}

It is not difficult to show that if $A$ is a $\cC$-antichain of an integer cone $\mathcal{C}$, then $\mathcal{C}\setminus \cB_\cC(A)$ is a $\cC$-semigroup.

\begin{proposition}
 Let $A$ be a $\cC$-antichain of an integer cone $\mathcal{C}$. Then $\mathcal{C}\setminus \cB_\cC(A)$ is a $\cC$-semigroup.   
\end{proposition}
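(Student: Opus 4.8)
The plan is to verify the three defining features of a $\cC$-semigroup for $S:=\cC\setminus\cB_\cC(A)$ in turn: that it is a submonoid of $\mathbb{N}^d$, that its gap set is finite, and that it is finitely generated. Throughout one may assume $\cC\neq\{\mathbf 0\}$, since otherwise $A\subseteq\{\mathbf 0\}$, $\cB_\cC(A)=\emptyset$, and $S=\{\mathbf 0\}$ is trivially a $\cC$-semigroup.

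First I would record that $\cB_\cC(A)$ is finite: for $\mathbf a\in A$ any $\mathbf x\leq_\cC\mathbf a$ satisfies $\mathbf a-\mathbf x\in\cC\subseteq\mathbb{N}^d$, so $\mathbf x$ lies in the finite box $\{\mathbf x\in\mathbb{N}^d:\mathbf x\leq_{\mathbb{N}^d}\mathbf a\}$; as $A$ is finite, so is the union. Next, $S$ is a submonoid: $\mathbf 0\in S$ by the definition of $\cB_\cC(A)$, and if $\mathbf x,\mathbf y\in S$ (both nonzero, the only nontrivial case) had $\mathbf x+\mathbf y\leq_\cC\mathbf a$, then $\mathbf a-\mathbf x=(\mathbf a-\mathbf x-\mathbf y)+\mathbf y\in\cC$ would give $\mathbf x\in\cB_\cC(A)$, a contradiction. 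The same boxing argument shows $\{\mathbf c\in\cC:\mathbf c\leq_\cC\mathbf a\text{ for some }\mathbf a\in A\}=\cB_\cC(A)\cup\{\mathbf 0\}$ is finite, so (as $\cC$ is infinite, being a nonzero integer cone) there is $\mathbf c\in\cC\setminus(\cB_\cC(A)\cup\{\mathbf 0\})$, and for such $\mathbf c$ the same kind of computation as above gives $\mathbf c+\cC\subseteq S$; this is the conductor element I will exploit.

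From $\mathbf c+\cC\subseteq S$ I would deduce $\mathrm{cone}(S)=\mathrm{cone}(\cC)$: writing $\cC=\langle c_1,\dots,c_m\rangle$ by Gordan's lemma, the sequence $\tfrac1n(\mathbf c+nc_i)\in S$ converges to $c_i$, so $c_i\in\mathrm{cone}(S)$ by closedness of the cone; the reverse inclusion is immediate from $S\subseteq\cC$. Since $\cC=\mathrm{cone}(\cC)\cap\mathbb{N}^d$, this gives $\mathrm{cone}(S)\cap\mathbb{N}^d=\cC$, i.e.\ $\cC$ is the integral closure of $S$, hence $\cH(S)=(\mathrm{cone}(S)\cap\mathbb{N}^d)\setminus S=\cC\setminus S=\cB_\cC(A)$ is finite. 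The remaining point, which I expect to be the only genuine obstacle, is that $S$ is finitely generated, hence affine. One clean way: in the semigroup rings $X^{\mathbf c}K[\cC]\subseteq K[S]\subseteq K[\cC]$, and finiteness of $\cC\setminus S$ forces $nc_i\in S$ for $n\gg 0$, so each $X^{c_i}$ is integral over $K[S]$; thus $K[\cC]=K[S][X^{c_1},\dots,X^{c_m}]$ is a finite $K[S]$-module over the Noetherian ring $K[\cC]$, and the Eakin--Nagata theorem makes $K[S]$ Noetherian, so the $\mathbb{Z}^d$-graded algebra $K[S]$ is generated over $K$ by finitely many homogeneous elements, i.e.\ $S$ is finitely generated. (Alternatively one may simply cite the known fact that a submonoid of $\mathbb{N}^d$ omitting only finitely many elements of its integral closure is finitely generated.) Putting the three pieces together, $S$ is a $\cC$-semigroup, and in fact $\cH(S)=\cB_\cC(A)$.
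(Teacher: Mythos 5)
Your proof is correct, and its core step --- the submonoid property, obtained by showing that $\mathbf{x}+\mathbf{y}\leq_\cC\mathbf{a}$ forces $\mathbf{x}\leq_\cC\mathbf{a}$ --- is exactly the paper's argument. The difference is one of completeness: the paper's proof consists essentially of that single step, declaring it ``not difficult to observe'' that $\cH(S)=\cB_\cC(A)$ and that this set is finite, and leaving finite generation entirely implicit (it is a known fact that a submonoid with finite complement in its integer cone is an affine semigroup). You supply all of the missing pieces: the box argument for finiteness of $\cB_\cC(A)$, the identification $\mathrm{cone}(S)=\mathrm{cone}(\cC)$ needed to see that $\cB_\cC(A)$ really is the whole gap set, and a self-contained finite-generation argument via integrality of the $X^{c_i}$ over $K[S]$ together with Eakin--Nagata; this buys a proof that does not lean on an external citation, at the cost of length. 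One small wrinkle: when you derive $c_i\in\mathrm{cone}(S)$ from the limit $\tfrac1n(\mathbf c+nc_i)\to c_i$ and ``closedness of the cone,'' you are implicitly using that $\mathrm{cone}(S)$ is polyhedral, which is not yet available at that point. This is harmless, because the observation you make two sentences later --- $nc_i\in S$ for $n\gg 0$, since only finitely many multiples of $c_i$ can lie in the finite set $\cB_\cC(A)$ --- already gives $c_i=\tfrac1n(nc_i)\in\mathrm{cone}(S)$ directly, with no limiting argument needed.
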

\begin{proof}
    Let us denote $S=\mathcal{C}\setminus \cB_\cC(A)$.  It is not difficult to observe that $\cH(S)=\cB_\cC(A)$ and $\cB_\cC(A)$ is finite. So, we need only to show that it is a submonoid of $\mathbb{N}^d$. Let $\mathbf{x},\mathbf{y}\in S$ and assume $\mathbf{x}+\mathbf{y}\notin S$. Then there exists $\mathbf{a}\in A$ such that $\mathbf{x}+\mathbf{y}\leq_\cC \mathbf{a}$. In particular $\mathbf{x}\leq_\cC \mathbf{a}$, that is, $\mathbf{x}\notin S$. But this is a contradiction. So, $\mathbf{x}+\mathbf{y}\in S$ , that is, $S$ is a submonoid of $\mathbb{N}^d$.
    \end{proof}

\begin{corollary}\label{unbooundeddecomposition}
    For all $k\in \mathbb{N}$, there exists a $\cC$-semigroup $S$ such that the number of irreducible components of $S$ is at least $k$. 
\end{corollary}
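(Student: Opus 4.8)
The plan is to produce, for each $k$, a single $\mathcal{C}$-semigroup whose gapset has at least $k$ maximal elements with respect to $\leq_\mathcal{C}$, and then invoke Theorem~\ref{thm:unboundeness-irreducible} to conclude that every irreducible decomposition must have at least $k$ components. The construction of such a semigroup is the point where the preceding machinery pays off: by the proposition just established, for any integer cone $\mathcal{C}$ and any $\mathcal{C}$-antichain $A$, the set $S = \mathcal{C}\setminus \mathcal{B}_\mathcal{C}(A)$ is a $\mathcal{C}$-semigroup, and as observed in that proof $\mathcal{H}(S)=\mathcal{B}_\mathcal{C}(A)$. So it suffices to exhibit, for each $k$, an integer cone $\mathcal{C}$ together with a $\mathcal{C}$-antichain $A$ of cardinality at least $k$ such that $A\subseteq \mathrm{Maximals}_{\leq_\mathcal{C}}\mathcal{B}_\mathcal{C}(A)$ — and this last containment is automatic, since no element of the antichain $A$ lies strictly below another element of $\mathcal{B}_\mathcal{C}(A)$ (any such dominating element would itself lie below some $\mathbf{a}\in A$, forcing a comparability inside $A$). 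Hence $|\mathrm{Maximals}_{\leq_\mathcal{C}}\mathcal{H}(S)|\geq k$.

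Concretely, I would work in $\mathbb{N}^2$ with the full cone $\mathcal{C}=\mathbb{N}^2$, so that $\leq_\mathcal{C}$ is the componentwise order, and take the antichain
\[
A=\{(k, 0), (k-1, 1), (k-2, 2), \ldots, (1, k-1), (0, k)\}\setminus\{(k,0),(0,k)\},
\]
or more simply any $k$ points $(i, k-i)$ for $i=1,\dots,k$ lying on the line $x+y=k$ with all coordinates positive; these are pairwise incomparable under $\leq_{\mathbb{N}^2}$, so $A$ is a $\mathbb{N}^2$-antichain of size $k$ (one may need $k$ slightly larger than strictly necessary, but the asymptotics are all that matter). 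Then $S=\mathbb{N}^2\setminus\mathcal{B}_{\mathbb{N}^2}(A)$ is a generalized numerical semigroup — in particular a $\mathcal{C}$-semigroup — and $\mathrm{Maximals}_{\leq_{\mathbb{N}^2}}\mathcal{H}(S)=A$ has $k$ elements.

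With this semigroup in hand, Theorem~\ref{thm:unboundeness-irreducible} applies directly: writing $S=S_1\cap\cdots\cap S_n$ with each $S_i$ irreducible forces $n\geq |\mathrm{Maximals}_{\leq_\mathcal{C}}\mathcal{H}(S)| = k$. Since every $\mathcal{C}$-semigroup admits some decomposition into finitely many irreducible $\mathcal{C}$-semigroups (recalled in Section 4), the number of irreducible components of $S$ is at least $k$, which is the assertion. I do not anticipate a genuine obstacle here; the only thing requiring a little care is verifying that the chosen $A$ really is an antichain under $\leq_\mathcal{C}$ and that the resulting gapset has $A$ exactly as its set of $\leq_\mathcal{C}$-maximal elements — both of which follow from the elementary remark above that an element dominating $\mathbf{a}\in A$ within $\mathcal{B}_\mathcal{C}(A)$ would violate the antichain property of $A$.
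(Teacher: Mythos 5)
Your proposal is correct and follows essentially the same route as the paper: build $S=\mathcal{C}\setminus\mathcal{B}_{\mathcal{C}}(A)$ from a $\mathcal{C}$-antichain $A$ with $|A|\geq k$, note that $\mathrm{Maximals}_{\leq_{\mathcal{C}}}\mathcal{H}(S)=A$, and apply Theorem~\ref{thm:unboundeness-irreducible}. The only difference is that you supply an explicit antichain in $\mathbb{N}^2$ where the paper merely observes that such an antichain always exists.
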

\begin{proof}
 We assume that $d > 1$. For a fixed integer cone $\mathcal{C}$. Define the family of $\mathcal{C}$-semigroups:
$$\mathcal{F}=\{\cC\setminus \cB_\cC(A)\mid A\text{ is a $\cC$-antichain of }\cC\}.$$
 \noindent
 Observe that for $S \in \mathcal{F}$, $\cH(S) = \cB_\cC(A)$. Also, in this case, one can observe that $\mathrm{Maximals}_{\leq_\cC}\cH(S) = A.$ Now, it is sufficient to observe that it is always possible to find a $\cC$-antichain $A$ in $\cC$ such that $|A|\geq k$. Therefore, we can apply Theorem~\ref{thm:unboundeness-irreducible}.
\end{proof}

{\it Acknowledgement.} A major portion of this work was done during the first author's visit to the University of Messina, Italy, in May 2024. The first author would like to extend sincere thanks to the University of Messina and the group GNSAGA of Istituto Nazionale di Alta Matematica (INdAM), Italy, for their support.


\begin{thebibliography}{10}

\bibitem{arslan}
Feza Arslan and Pinar Mete.
\newblock Hilbert functions of {G}orenstein monomial curves.
\newblock {\em Proc. Amer. Math. Soc.}, 135(7):1993--2002, 2007.

\bibitem{op3}
Om~Prakash Bhardwaj.
\newblock Reduced type of certain numerical semigroup rings.
\newblock {\em arXiv preprint arXiv:2406.15923}, 2024.

\bibitem{op1}
Om~Prakash Bhardwaj, Kriti Goel, and Indranath Sengupta.
\newblock Affine semigroups of maximal projective dimension.
\newblock {\em Collect. Math.}, 74(3):703--727, 2023.

\bibitem{op2}
Om~Prakash Bhardwaj and Indranath Sengupta.
\newblock Affine semigroups of maximal projective dimension---{II}.
\newblock {\em Semigroup Forum}, 108(1):20--42, 2024.

\bibitem{bogart2024unboundedness}
Tristram Bogart and Seyed Amin~Seyed Fakhari.
\newblock Unboundedness of irreducible decompositions of numerical semigroups.
\newblock {\em arXiv preprint arXiv:2405.11080}, 2024.

\bibitem{bresinsky}
Herbert Bresinsky.
\newblock On prime ideals with generic zero {$x\sb{i}=t\sp{n\sb{i}}$}.
\newblock {\em Proc. Amer. Math. Soc.}, 47:329--332, 1975.

\bibitem{bruns1998cohen}
Winfried Bruns and J{\"u}rgen Herzog.
\newblock {\em Cohen-macaulay rings}.
\newblock Number~39. Cambridge university press, 1998.

\bibitem{cisto2020generalization}
Carmelo Cisto, Michael DiPasquale, Gioia Failla, Zachary Flores, Chris
  Peterson, and Rosanna Utano.
\newblock A generalization of wilf’s conjecture for generalized numerical
  semigroups.
\newblock {\em Semigroup Forum}, 101(2):303--325, 2020.

\bibitem{cisto-irreducible}
Carmelo Cisto, Gioia Failla, Chris Peterson, and Rosanna Utano.
\newblock Irreducible generalized numerical semigroups and uniqueness of the
  {F}robenius element.
\newblock {\em Semigroup Forum}, 99(2):481--495, 2019.

\bibitem{cisto-generators}
Carmelo Cisto, Gioia Failla, and Rosanna Utano.
\newblock On the generators of a generalized numerical semigroup.
\newblock {\em An. \c{S}tiin\c{t}. Univ. ``Ovidius'' Constan\c{t}a Ser. Mat.},
  27(1):49--59, 2019.

\bibitem{cisto-navarra}
Carmelo Cisto and Navarra Francesco.
\newblock On some classes of generalized numerical semigroups.
\newblock {\em An. \c{S}tiin\c{t}. Univ. ``Ovidius'' Constan\c{t}a Ser. Mat.},
  accepted, 2024.

\bibitem{NumericalSgps}
Manuel Delgado, Pedro~A. Garcia-Sanchez, and Jose Morais.
\newblock {NumericalSgps}, a package for numerical semigroups, {V}ersion 1.3.1.
\newblock \href {https://gap-packages.github.io/numericalsgps}
  {\texttt{https://gap-packages.github.io/}\discretionary
  {}{}{}\texttt{numericalsgps}}, Jul 2022.
\newblock Refereed GAP package.

\bibitem{eriksen}
Eivind Eriksen.
\newblock Differential operators on monomial curves.
\newblock {\em J. Algebra}, 264(1):186--198, 2003.


\bibitem{GAP}
The GAP~Group.
\newblock {\em {GAP -- Groups, Algorithms, and Programming, Version 4.13.0}},
  2024.

\bibitem{garcia_someproperties}
Juan~Ignacio Garc\'{\i}a-Garc\'{\i}a, Daniel Mar\'{\i}n-Arag\'{o}n, Adri\'an
  S\'{a}nchez-Loureiro, and Alberto Vigneron-Tenorio.
\newblock Some properties of affine {$\mathcal{C}$}-semigroups.
\newblock {\em Results Math.}, 79(2):Paper No. 52, 19, 2024.

\bibitem{garcia2018extension}
Juan~Ignacio Garc{\'\i}a-Garc{\'\i}a, Daniel Mar{\'\i}n-Arag{\'o}n, and Alberto
  Vigneron-Tenorio.
\newblock An extension of {W}ilf’s conjecture to affine semigroups.
\newblock {\em Semigroup Forum}, 96(2):396--408, 2018.

\bibitem{garcia-pseudofrobenius}
Juan~Ignacio Garc\'{\i}a-Garc\'{\i}a, Ignacio Ojeda, Jos\'e~Carlos Rosales, and
  Alberto Vigneron-Tenorio.
\newblock On pseudo-{F}robenius elements of submonoids of {$\Bbb N^d$}.
\newblock {\em Collect. Math.}, 71(1):189--204, 2020.

\bibitem{garcia1999numerical}
Pedro~A. Garc{\'\i}a-S{\'a}nchez and Jos{\'e}~Carlos Rosales.
\newblock Numerical semigroups generated by intervals.
\newblock {\em Pacific Journal of Mathematics}, 191(1):75--83, 1999.


\bibitem{huneke2021torsion}
Craig Huneke, Sarasij Maitra, and Vivek Mukundan.
\newblock Torsion in differentials and berger’s conjecture.
\newblock {\em Research in the Mathematical Sciences}, 8(4):60, 2021.

\bibitem{jafari2024depth}
Raheleh Jafari and Ignacio Ojeda.
\newblock On the depth of simplicial affine semigroup rings.
\newblock {\em Collectanea Mathematica}, pages 1--18, 2024.

\bibitem{jafari2022type}
Raheleh Jafari and Marjan Yaghmaei.
\newblock Type and conductor of simplicial affine semigroups.
\newblock {\em Journal of Pure and Applied Algebra}, 226(3):106844, 2022.

\bibitem{maitra2023extremal}
Sarasij Maitra and Vivek Mukundan.
\newblock Extremal behavior of reduced type of one dimensional rings.
\newblock {\em arXiv preprint arXiv:2306.17069}, 2023.


\bibitem{moscarielloedim4}
Alessio Moscariello.
\newblock On the type of an almost {G}orenstein monomial curve.
\newblock {\em J. Algebra}, 456:266--277, 2016.

\bibitem{moscarielloedim5}
Alessio Moscariello.
\newblock On the boundedness of the type of an almost {G}orenstein monomial
  curve in {$\Bbb A^5$}.
\newblock {\em Comm. Algebra}, 51(3):1179--1185, 2023.

\bibitem{nari}
Hirokatsu Nari.
\newblock Symmetries on almost symmetric numerical semigroups.
\newblock {\em Semigroup Forum}, 86(1):140--154, 2013.


\bibitem{rosales2009numerical}
Jos{\'e}~Carlos Rosales and Pedro~A. Garc{\'\i}a-S{\'a}nchez.
\newblock {\em Numerical semigroups}, volume~20.
\newblock Springer, 2009.

\bibitem{singhal}
Deepesh Singhal and Yuxin Lin.
\newblock Frobenius allowable gaps of generalized numerical semigroups.
\newblock {\em Electron. J. Combin.}, 29(4):Paper No. 4.12, 21, 2022.

\bibitem{stamate}
Dumitru~I. Stamate.
\newblock Betti numbers for numerical semigroup rings.
\newblock In {\em Multigraded algebra and applications}, volume 238 of {\em
  Springer Proc. Math. Stat.}, pages 133--157. Springer, Cham, 2018.


\end{thebibliography}
	\end{document}